\documentclass[11pt,reqno]{amsart}
\usepackage{amsmath,amssymb,amsthm,amsfonts,mathrsfs,latexsym,times,color,bm}
\usepackage{amsmath,amsfonts}
\usepackage{amsthm}
\usepackage{graphicx}
\usepackage{color}
\usepackage{multicol}
\usepackage{hyperref}

\makeatletter
\renewcommand{\maketag@@@}[1]{\hbox{\m@th\normalsize\normalfont#1}}%
\makeatother

\topmargin=-0.1in \oddsidemargin3mm \evensidemargin3mm
\textheight220mm \textwidth160mm

\newtheorem{thm}{Theorem}[section]
\newtheorem{definition}{Definition}[section]

\newtheorem{Lemma}[thm]{Lemma}
\newtheorem{remark}{Remark}[section]
\newtheorem{theorem}[thm]{Theorem}
\newtheorem{proposition}[thm]{Proposition}

\newtheorem{corollary}[thm]{Corollary}

\numberwithin{equation}{section}

\newcommand{\beq}{\begin{equation}}
\newcommand{\eeq}{\end{equation}}
\newcommand{\ben}{\begin{eqnarray}}
\newcommand{\een}{\end{eqnarray}}
\newcommand{\beno}{\begin{eqnarray*}}
\newcommand{\eeno}{\end{eqnarray*}}

\newcommand{\bv}{\mathbf{v}}

\newcommand{\bh}{\mathbf{h}}

\newcommand{\bF}{\mathbf{F}}
\newcommand{\bJ}{\mathbf{J}}

\newcommand{\bE}{\mathbf{E}}
\newcommand{\bQ}{\mathbf{Q}}
\newcommand{\bw}{\mathbf{w}}
\newcommand{\bU}{\mathbf{U}}
\newcommand{\bB}{\mathbf{B}}

\numberwithin{equation}{section}

\begin{document}
\title[two-dimensional compressible Euler equations]{Low regularity solutions of two-dimensional compressible Euler equations with dynamic vorticity}

\subjclass[2010]{Primary 76N10, 35R05, 35L60}

\author{Huali  Zhang}
\address{School of Mathematics, Hunan University, Changsha, 410082, People's Republic of China.}
\email{hualizhang@hnu.edu.cn}

\date{\today}

\keywords{compressible Euler equations, low regularity solutions, a wave-transport system, Strichartz estimate.}

\begin{abstract}
By establishing a sharp Strichartz estimate for the velocity and density, we prove the existence, uniqueness, and continuous dependence of solutions for the Cauchy problem of two-dimensional compressible Euler equations, where the initial velocity, density, and specific vorticity $(\bv_0, \rho_0, \varpi_0) \in H^{s}(\mathbb{R}^2)\times H^{s}(\mathbb{R}^2) \times H^2(\mathbb{R}^2), s>\frac{7}{4}$. Our strategy relies on Smith-Tataru's work \cite{ST} for quasi-linear wave equations. 
\end{abstract}
	
\maketitle

\section{Introduction}
\subsection{Overview}
We consider the Cauchy problem of the compressible Euler equations in $\mathbb{R}^+ \times \mathbb{R}^2$, of the form
\begin{equation}\label{CEE0}
	\begin{cases}
		\varrho_t+\text{div}\left(\varrho \bv \right)=0,
		\\
		\bv_t + \left(\bv\cdot \nabla \right)\bv+\frac{1}{\varrho}\nabla p(\varrho)=0,
	\end{cases}
\end{equation}
where the state function takes the general form
\begin{equation}\label{pq}
	p=p(\varrho),
\end{equation}
and the initial data is
\begin{equation}\label{id}
	(\bv, \varrho)|_{t=0}=(\bv_0, \varrho_0).
\end{equation}
Above, $\bv=(v^1,v^2), \varrho$, and $p$ denote the fluid velocity, density, and pressure respectively, and $A$ is a constant. In the theory of partial differential equations, local well-posedness is the first question to ask. For compressible Euler equations, no matter how smooth and small the initial data is, the solution of \eqref{CEE0} will blow up in finite time \cite{C,LS1,R,S}. So we can only study the well-posedness of \eqref{CEE0}--\eqref{id} in a local sense. In many problems of this type, one is interested not only in local well-posedness in some Sobolev space $H^s(\mathbb{R}^2)$, but also in lowering the exponent $s$ as much as possible. Naturally, we ask the question: for which $s_c$, the Cauchy problem \eqref{CEE0}--\eqref{id} is well-posed if $(\bv_0,\varrho_0) \in H^s(\mathbb{R}^2)(s>s_c)$ and ill-posed if $(\bv_0,\varrho_0) \in H^s(\mathbb{R}^2)(s\leq s_c)$. This question has been well studied \cite{BL,ST,L} for incompressible Euler equations. However, for \eqref{CEE0}--\eqref{id}, especially in the case of non-zero vorticity, the corresponding problem remains open. Our goal is to study the local well-posedness of low regularity solutions to \eqref{CEE0}--\eqref{id} and explore the sharp Sobolev exponent.
\subsection{Background}
The compressible Euler equations is a classical system in physics to describe the motion of an ideal fluid. The phenomena displayed in the interior of a fluid fall into two broad classes, the phenomena of acoustics waves and the phenomenon of vortex motion.  The sound
phenomena depend on the compressibility of a fluid, while the vortex phenomena occur even in a
regime where the fluid may be considered incompressible.

For the Cauchy problem of $n$-D incompressible Euler equations:
\begin{equation}\label{IEE}
	\begin{cases}
		\bv_t + \left(\bv\cdot \nabla \right)\bv+\nabla p=0,  \quad (t,x) \in \mathbb{R}^+ \times \mathbb{R}^n,
		\\
		\mathrm{div} \bv=0,
		\\
		\bv|_{t=0}=\bv_0,
	\end{cases}
\end{equation}
Kato and Ponce \cite{KP} proved the local well-posedness of \eqref{IEE} if $\bv_0 \in H^{s}(\mathbb{R}^n), s>1+\frac{n}{2}$. Chae \cite{Chae} proved the local existence of solutions by setting $\bv_0$ in the critical Triebel-Lizorkin space. On the opposite direction, the ill-posedness of solutions of \eqref{CEE0} was answered by Bourgain and Li \cite{BL, BL2}, who proved that the solution will blow up instantaneously for some $\bv_0 \in H^{1+\frac{n}{2}}(\mathbb{R}^n), n=2,3$. Very recently, Guo-Li in \cite{GL} studied the continuous dependence of initial data in the critical Triebel-Lizorkin space. Very recently, Andersson and Kapitanski \cite{AK} firstly proved the well-posedness of low regularity solutions of incompressible Neo-Hookean materials for $s>\frac74 (n=2)$ or $s>2 (n=3)$ both in Lagrangian and Euler coordinates, with some additional regularity conditions on the vorticity. Later, Zhang \cite{Z2} relaxed the Sobolev indices to $s>\frac{n}{2}$ ($n=2,3$) in Lagrangian coordinates.

In the irrotational case, the compressible Euler equations can be reduced to a special quasilinear wave equation. For general quasilinear wave equations, it can be stated as
\begin{equation}\label{qwe}
	\begin{cases}
		&\square_{h(\phi)} \phi=q(d \phi, d \phi), \quad (t,x) \in \mathbb{R}^+ \times \mathbb{R}^{n},
		\\
		& \phi|_{t=0}=\phi_0, \partial_t \phi|_{t=0}=\phi_1,
	\end{cases}
\end{equation}
where $\phi$ is a scalar function, $d=(\partial_t, \partial_1, \partial_2, \cdots, \partial_n)$, and $h(\phi)$ a Lorentzian metric depending on $\phi$, and $q$ a quadratic term of $d \phi$. Set the initial data $(\phi_0, \phi_1) \in H^s(\mathbb{R}^n) \times H^{s-1}(\mathbb{R}^n)$. By using classical energy methods and Sobolev imbeddings, Hughes-Kato-Marsden \cite{HKM} proved the local well-posedness of the problem \eqref{qwe} for $ s>\frac{n}{2}+1$. Ifrim-Tataru \cite{IT1} studied this result for quasilinear hyperbolic equations by using the frequency envelope approach, where the frequency envelope is first introduced by Tao \cite{Tao1}. On the other side, Lindblad \cite{L} constructed some counterexamples for \eqref{qwe} when $s=2,n=3$. For $s={\frac{7}{4}}, n=2$, a 2D counterexample was constructed by Ohlman \cite{Oman}. There is a gap between the result \cite{HKM} and \cite{L,Oman}. To lower the regularity of the initial data, 
one may seek a type of space-time estimates of $d \phi$, namely,
Strichartz estimates. Of course there are several steps to obtain the sharp Strichartz estimates for \eqref{qwe}. The first natural idea is to consider the the wave equation with variable coefficients
\begin{equation}\label{qw0}
	\square_{h(t,x)}\phi=0,
\end{equation}
and then exploit it to obtain the low regularity solutions of \eqref{qwe}. Kapitanskij \cite{Ka} and Mockenhaupt-Seeger-Sogge \cite{MSS} discussed the Strichartz estimates for \eqref{qw0} with smooth coefficients $h$. With rough coefficients $h \in C^2$, the study of Strichartz estimates for \eqref{qw0} in two or three dimensions began with Smith's result \cite{Sm}. At the same time, counterexamples was constructed by Smith-Sogge \cite{SS}, who showed that for $\alpha<2$ there exist $h \in C^\alpha$ for which the Strichartz estimates fail. Later, the Strichartz
estimates were established in all dimensions for $h \in C^2$ in Tataru \cite{T2}. The next important work was independently achieved by Bahouri-Chemin \cite{BC2} and Tataru \cite{T1}, who established the local well-posedness of \eqref{qwe} with $s > \frac{n}{2} + \frac{7}{8}, n=2$ or $s > \frac{n}{2} + \frac{3}{4}, n\geq3$. Shortly afterward, Tataru \cite{T3} relaxed the Sobolev indices $s>\frac{n+1}{2}+\frac{1}{6}, n \geq 3$. At the same time, Smith-Tataru \cite{ST0} showed that the $\frac{1}{6}$ loss is sharp for general variable coefficients $h$. Thus, to improve the above results, one needs to exploit a new way or structure of Equation~\eqref{qwe}. Through introducing a vector-field approach and a decomposition of the Ricci curvature, the 3D result of \cite{BC2, T1, T2, T3} was later improved by Klainerman-Rodnianski \cite{KR2}, who proved the local well-posedness of \eqref{qwe} by introducing vector-field methods for $s>2+\frac{2-\sqrt{3}}{2}$. Based on the vector-field approach, Geba \cite{Geba} studied the local well-posedness of \eqref{qwe} in two dimensions for $s > \frac{7}{4} + \frac{5-\sqrt{22}}{4}$. By using wave packets of localization to represent solutions of a linear equation, a sharp result was proved by Smith-Tataru \cite{ST}, who established the local well-posedness of \eqref{qwe} if $s>\frac{7}{4}, n=2$ or $s>2, n=3$ or $s>\frac{n+1}{2}, 4 \leq n \leq 6$. An alternative proof of the 3D case was also obtained through vector-field approach by Wang \cite{WQSharp}. Besides, we should also mention substantial significant progress which has been made on low
regularity solutions of Einstein vacuum equations, membrane equations, quasilinear wave equations due to Andersson and Moncreif \cite{AM}, Zhou-Lei \cite{ZL}, Ettinger and Lindblad \cite{EL}, Klainerman and Rodnianski \cite{KR}, Klainerman-Rodnianski-Szefel \cite{KR1},  Wang \cite{WQRough,WQ2}, Allen-Andersson-Restuccia \cite{AAR}, Speck \cite{S1}, Wang \cite{WCB} and so on.

In the general case, concerning to $n$-D compressible Euler equations, there are several aspects on studying the Cauchy problem \eqref{CEE0}--\eqref{id}, i.e. shock formation and local well-posedness. The first work on the formation of shocks was done by Riemann in \cite{R}. Riemann considered the case of isentropic flow with plane symmetry and introduced for such systems the so-called Riemann invariants, and then proved that solutions will blow up in finite time even under smooth initial conditions. Sideris \cite{S} considered the three dimensional compressible Euler equations and obtained the first general result on the formation of singularity. By extending the basic idea of \cite{R}, Christodoulou-Miao \cite{C} started from geometric aspects to study the shock formation of irrotational and isentropic flow in 3D, and gave a complete description of the maximal classical development. Yin in \cite{Yin} constructed a class of spherical data to discuss the formation of shock wave in three dimensions. For multi-dimensional solutions with spherical symmetry, the blow-up phenomena was obtained by Li-Wang \cite{LW}. Recently, Luk-Speck \cite{LS1,LS2,S2} first introduced a wave-transport structure of the flow with dynamic vorticity and entropy, and described the singularity formation in two or three dimensions. We should also mention substantial progress which has been made due to Merle-Raphael-Rodnianski-Szeftel \cite{MR,MR1}, Li-Xin-Yin \cite{LXY}, and free boundary problems due to Coutand-Lindblad-Shkoller \cite{CLS}, Coutand-Shkoller \cite{CS,CS2}, Lei-Du-Zhang \cite{LDZ}, Jang-Masmoudi \cite{JM}, Ifrim-Tataru \cite{IT} and so on.

To the local well-posedness problem of \eqref{CEE0}--\eqref{id}, it's well-posed if $(\bv_0, \varrho_0) \in H^{s}, s>1+\frac{n}{2}$ and the density is far away from vacuum, please refer Majda's book \cite{M}. Very recently, based on the wave-transport system proposed by Luk and Speck \cite{LS1,LS2,S2}, some researchers studied Strichartz estimates of velocity and density using the vector field method, thus proving the existence and uniqueness of the rough solution of \eqref{CEE0}--\eqref{id}. For Strichartz estimates, it arises from
dispersive equations. We refer the reader to Strichartz’s work [49]. The first work about rough solutions of three-dimensional compressible Euler equations was obtained by Disconzi-Luo-Mazzone-Speck \cite{DLS} and Wang \cite{WQEuler}. In \cite{DLS}, Disconzi-Luo-Mazzone-Speck proved the well-posedness of solutions with dynamic vorticity and entropy, where they assumed the
initial entropy $S_0$, velocity $\bv_0$, logarithmic density $\rho_0$ and specific vorticity $\bw_0$ (it will be defined in Definition \ref{pw}) in $H^{3+} \times (H^{2+})^3$ and $\Delta S_0, \mathrm{curl} \bw_0 \in C^{0, \delta}$($0<\delta <1$). Independently, Wang \cite{WQEuler} proved the local well-posedness by taking $(\bv_0,\rho_0,\bw_0) \in H^{s}(\mathbb{R}^3) \times H^{s}(\mathbb{R}^3) \times H^{s'}(\mathbb{R}^3), \ 2<s'<s $. The works \cite{DLS} and \cite{WQEuler} are based on vector-field approach. Recently, An-Chen-Yin \cite{ACY1} studied the ill-posedness of 2D ideal compressible MHD equations, which implies that the Cauchy problem \eqref{CEE} is ill-posed if $(\bv_0, \rho_0,\varpi_0) \in H^{\frac74}(\mathbb{R}^2) \times H^{\frac74}(\mathbb{R}^2) \times H^{\infty}(\mathbb{R}^2)$. Later, Andersson-Zhang \cite{AZ} gave an improved result of \cite{WQEuler} by combining Smith-Tataru's method \cite{ST} and semi-classical analysis, and they proved that the Cauchy problem of 3D compressible Euler equations is well-posed if $(\bv_0, \rho_0,\bw_0) \in H^{2+}(\mathbb{R}^3) \times H^{2+}(\mathbb{R}^3) \times H^{2+}(\mathbb{R}^3)$ or $(\bv_0, \rho_0,\bw_0,S_0) \in H^{\frac52}(\mathbb{R}^3) \times H^{\frac52}(\mathbb{R}^3) \times H^{\frac32+}(\mathbb{R}^3)\times H^{\frac52+}(\mathbb{R}^3)$. However, there are few results with regard to low regularity solutions of the Cauchy problem of two-dimensional compressible Euler equations. Inspired by these historical results, we wish to study the low regularity solutions of \eqref{CEE0}--\eqref{id} by establishing sharp Strichartz estimates of the velocity and density in two dimensions.
\subsection{Motivation}
In view of the aforementioned results we see that most studies are focusing on the behavior of solutions for 3D compressible Euler equations. These historical results \cite{DLS,WQEuler} related to rough solutions in three dimensions successfully exploited the vector-field method in the case of non-zero vorticity, where the regularity of velocity and density are optimal. One may ask that  whether the vector-field method could solve the 2D problem. In fact, the vector-field methods may not work very well for the 2D problem, for the conformal energy in 2D is not ideal. We persuade the readers to Geba's work \cite{Geba}. But, we noticed that the sharp regularity problem of 2D quasilinear wave equations is included in Smith-Tataru \cite{ST}. Our starting point is the result of Smith-Tataru \cite{ST}, which, for generic nonlinear wave equations in two dimensions, yields the sharp local well-posedness in $H^{\frac{7}{4}+}$. However, this result cannot be directly applied in the case of compressible Euler equations unless the fluid is assumed to be irrotational. Instead, in the general case, the compressible Euler flow can be seen as a coupling of a wave equation and a transport equation for the vorticity, which causes many difficulties. Let us explain the difficulty of the problem and the difference between quasilinear wave equations and compressible Euler equations.

To lower the Sobolev exponent of Cauchy problem \eqref{CEE0}--\eqref{id}, the key is to prove a type of Strichartz estimate. If the vorticity is zero, one could observe that there is a type of Strichartz estimate $\|d\bv, d \rho\|_{L^4_t L^\infty_x}$ from Smith-Tataru's result \cite{ST}, where the regularity $s$ should be greater than $\frac{7}{4}$. With non-zero vorticity, what's the situation of Strichartz estimate?  In particular, there are no
Strichartz estimates for the vorticity. Let us see the coupled system. Precisely, $\partial \varpi$ is a source term in the wave equation,
\begin{equation*}
	\square_g \bv = \partial \varpi+\text{l.o.t.}
\end{equation*}
and $\varpi$ satisfies
\begin{equation*}
	\partial_t \varpi + \bv \cdot \nabla \varpi=0.
\end{equation*}
By utilizing the method of proving Strichartz estimates for wave equations, we know that characteristics are crucial. Although energy estimates of $\bv$
and $\rho$ are independent of $\varpi$, $\partial \varpi$ plays an essential role for the characteristics. Hence, we need some energy estimates of $\varpi$. By classical commutator estimates, the condition $ \partial \varpi \in L_x^\infty$ is essential for us to get the estimate of $\|\varpi\|_{H^a}, a\in(1,2]$. In \cite{Z1}, Zhang proved that the local solution is well-posed if the initial velocity, density and specific vorticity $(\bv_0, \rho_0, \varpi_0) \in H^s (s>\frac{7}{4})$ and $ \partial \varpi \in L_x^\infty$. Inspired by \cite{WQEuler,AZ}, we will find some good structure of the vorticity and lower the regularity of vorticity, i.e. remove the initial assumption on $\|\partial \varpi\|_{L_x^\infty}$. To be precise, by setting $(\bv_0, \rho_0, \varpi_0) \in H^s \times H^s \times H^2(s>\frac{7}{4})$, we discuss the local existence, uniqueness and continuous dependence of solutions of the Cauchy problem \eqref{CEE0}--\eqref{id}, where $\bv_0$, $\rho_0$, and $\varpi_0$ describe the initial velocity, density, and specific vorticity respectively.
\subsection{Statement of the result}
Before stating our result, let us introduce some following quantities and introduce a equivalent system of \eqref{CEE0}.
\subsubsection{Some definitions}
Let us first recall the classical Hadamard standard for well-posedness.
\begin{definition}\cite{IT1}\label{WL}
	The problem \eqref{CEE0}--\eqref{id} is locally well-posed in a Sobolev space $X$ if the following properties
	are satisfied:

	$\mathrm{(i)}$  For each $(\bv_0, \varrho_0)$ there exists some time $T>0$ and a solution $(\bv,\rho)\in C([0,T];X)$.

	$\mathrm{(ii)}$ The above solution is unique.

	$\mathrm{(iii)}$ The data to solution map is continuous from $X$ into $C([0,T];X)$.
\end{definition}
In the following, let us introduce the logarithmic density, specific vorticity, the speed of sound, and the acoustic metric.
\begin{definition}\cite{LS1}\label{pw}
	Let $\bar{\rho}$ be a constant background density and $\bar{\rho}>0$. We denote the logarithmic density ${\rho}$
	\begin{equation}\label{lr}
		{\rho} := \textstyle{ \ln \left({\bar{\rho}}^{-1} \varrho\right)},
	\end{equation}
	and the specific vorticity $\varpi$
	\begin{equation}\label{sv}
		\varpi:=\bar{\rho}^{-1}\mathrm{e}^{-\rho} { {\mathrm{curl}\bv} }.
	\end{equation}
\end{definition}
\begin{definition}\cite{LS1}\label{shengsu}
	We denote
	the speed of sound
	\begin{equation}\label{ss}
		c_s:={\sqrt{{d}p/{d}\varrho}}.
	\end{equation}
	In view of \eqref{lr}, we have
	\begin{equation}\label{ss1}
		c_s=c_s(\rho)
	\end{equation}
	and
	\begin{equation}\label{ssd}
		c'_s=c'_s(\rho):=\frac{\text{d}c_s}{\text{d} \rho}.
	\end{equation}
\end{definition}
\begin{definition}\cite{LS1}\label{metricd}
	We define the acoustical metric $g$ and the inverse acoustical metric $g^{-1}$ relative to the Cartesian coordinates as follows:
	\begin{align}
		&g:=-dt\otimes dt+c_s^{-2}\sum_{a=1}^{2}\left( dx^a-v^adt\right)\otimes\left( dx^a-v^adt\right),
		\\
		&g^{-1}:=-(\partial_t+v^a \partial_a)\otimes (\partial_t+v^b \partial_b)+c_s^{2}\sum_{i=1}^{2}\partial_i \otimes \partial_i.
	\end{align}	
\end{definition}
Based on these definitions, let us introduce the system under new variables.
\begin{Lemma}\label{wte} \cite{LS1}
	For 2D compressible Euler equations \eqref{CEE0}, it can be reduced to the following equation:
	\begin{equation*}
		\begin{cases}
			& \mathbf{T} v^i=c^2_s\delta^{ia} \partial_a { \rho},
			\\
			& \mathbf{T}{\rho} =-\mathrm{div} \bv,
		\end{cases}
	\end{equation*}
	where $\mathbf{T}=\partial_t + \bv \cdot \nabla $.
\end{Lemma}
To be simple, we give the notations $d=(\partial_t, \partial_{x_1}, \partial_{x_2})^\mathrm{T}$, $\partial_{x_0}=\partial_t$ and $\partial=(\partial_{x_1}, \partial_{x_2})^\mathrm{T}$. Set
\begin{equation}\label{a1}
	\delta\in (0, s-\frac{7}{4}),
\end{equation}
and
\begin{equation*}
	\left< \xi \right>=(1+|\xi|^2)^{\frac{1}{2}}, \ \xi \in \mathbb{R}^2.
\end{equation*}
Denote by $\left< \partial \right>$ the corresponding Bessel potential multiplier. We are now ready to state the result in this paper.
\subsubsection{Statement of Results.}
\begin{theorem}\label{dingli2}
	Let $s>\frac{7}{4}$. Consider the following Cauchy problem of two-dimensional compressible Euler equations
	\begin{equation}\label{CEE}
		\begin{cases}
			&\mathbf{T} v^i=c^2_s\delta^{ia} \partial_a { \rho},
			\\
			&\mathbf{T}{\rho}=-\mathrm{div} \bv,
			\\
			&(\bv,{\rho})|_{t=0}=(\bv_0,{\rho}_0).
		\end{cases}
	\end{equation}
	Assume the acoustical speed
	\begin{equation}\label{Sa}
		c_s|_{t=0}>c_0>0,
	\end{equation}
	where $c_0$ is a positive constant. Let $\varpi$ be defined in \eqref{sv} and $M_0$ be any positive constant. If
	\begin{equation}\label{chuzhi}
		\| \bv_0\|_{H^{s}} +
		\| {\rho}_0\|_{H^{s}} + \| \varpi_0\|_{H^{2}}
		\leq M_0,
	\end{equation}
	then there exists two positive constants $T_*$ and $M_1$ ($T_*$ and $M_1$ depends on $s$ and $M_0$ ) such that the Cauchy problem \eqref{CEE} is locally well-posed. Precisely,

	$\mathrm{(1)}$ there exists a unique solution $(\bv,{\rho}) \in C([0,T_*],H_x^s)\cap C^1([0,T_*],H_x^{s-1}), \varpi \in C([0,T_*],H_x^2)\cap C^1([0,T_*],H_x^1)$ and $(d\bv, d{{\rho}}) \in L^4_{[0,T_*]}L_x^\infty$, and it satisfies the energy estimate
	\begin{equation*}
		\|\bv, {\rho}\|_{L^\infty_tH_x^s}+\|\partial_t \bv, \partial_t {\rho}\|_{L^\infty_tH_x^{s-1}}+ \|\varpi\|_{L^\infty_tH_x^{2}}+ \|\partial_t \varpi\|_{L^\infty_tH_x^{1}} \leq M_1,
	\end{equation*}

	$\mathrm{(2)}$ the solution $\bv$ and ${\rho}$ satisfy the Strichartz estimate
	\begin{equation*}
		\|d\bv, d{\rho}\|_{L^4_tL_x^\infty} \leq M_1
	\end{equation*}

	$\mathrm{(3)}$ for any $1 \leq r \leq s+1$, and for each $t_0 \in [0,T_*]$, the linear equation
	\begin{equation}\label{Linear}
		\begin{cases}
			\square_g f=\mathbf{T}\Theta+B, \qquad (t,x) \in [0,T_*]\times \mathbb{R}^2,
			\\
			f(t_0,\cdot)=f_0 \in H^r(\mathbb{R}^2), \quad \partial_t f(t_0,\cdot)=f_1 \in H^{r-1}(\mathbb{R}^2),
		\end{cases}
	\end{equation}
	admits a solution $f \in C([0,T_*],H^r) \times C^1([0,T_*],H^{r-1})$ and the following estimates hold:
	\begin{multline}\label{E0}
		\| f\|_{L_t^\infty H_x^r}+ \|\partial_t f\|_{L_t^\infty H_x^{r-1}} \\
		\lesssim \|f_0\|_{H^r}+ \|f_1\|_{H^{r-1}}+\|\Theta\|_{L^\infty_tH_x^{r-1} \cap L^1_tH^r}+\|B\|_{L^1_tH_x^{r-1}}.
	\end{multline}
	Additionally, the following estimates hold, provided $k<r-\frac{3}{4}$,
	\begin{equation}\label{SE1}
		\| \left<\partial \right>^k f\|_{L^4_{t}L^\infty_x} \lesssim  \|f_0\|_{H^r}+ \|f_1\|_{H^{r-1}}+\| \Theta \|_{L^\infty_tH_x^{r-1} \cap L^1_tH^r}+\|B\|_{L^1_tH_x^{r-1}}.
	\end{equation}

	$\mathrm{(4)}$  the map is continuous from $(\bv_0,\rho_0,\varpi_0) \in H^s \times H^s \times H^2$  to $(\bv,\rho,\varpi)(t,\cdot) \in C([0,T_*];H_x^s \times H_x^s \times H_x^2)$.
\end{theorem}
\begin{remark}
	The existing time  $T_*$ depends on $\| (\bv_0, \rho_0)\|_{H^{s}}$ and $\| \varpi_0\|_{H^{2}}$. While, $\|d\bv, d{\rho}\|_{L^4_tL_x^\infty}$ only depends on the quantities $\| (\bv_0, \rho_0)\|_{H^{\frac74+}}$ and $\| \varpi_0\|_{H^{1+}}$, please see \eqref{st8} in Proposition \ref{r4} below.
\end{remark}
\begin{remark}
	The condition \eqref{Sa} is used to satisfy the hyperbolicity condition of the system \eqref{CEE}.
\end{remark}
\begin{remark}
	For 2D compressible Euler equations, the classical result in \cite{M} requires $s>2$ for the regularity of velocity and density. Our result lowers the regularity of the velocity and density by proving the space-time Strichartz estimates of the velocity and density. Furthermore, if the vorticity is zero (the specific vorticity is also zero), the Sobolev regularity in Theorem \ref{dingli2} is corresponding to the 2D sharp result by Smith-Tataru \cite{ST}.
\end{remark}
\begin{remark}
	Compared with the prior works for 3D compressible Euler equations, i.e.\cite{DLS,WQEuler}, the situation in 2D is very different. And it's very hard to use the similar approach in \cite{DLS,WQEuler,AZ} to prove Theorem \ref{dingli2}, even for the irrotational case in 2D. Because the conformal energy in 2D is worse than 3D, which give a sacrifice on some regularity loss on metrics, please refer Geba's result \cite{Geba}.
\end{remark}
\begin{remark}
	The idea of deriving a good structure of $\mathbf{T}\left( \Delta \varpi- \partial {{\rho}}\partial \varpi \right)$ in the paper is inspired by Wang \cite{WQEuler}, but our process is not trivial. Referring \cite{WQEuler}, the good structure benefits from $\text{curl}\left( \mathrm{e}^{{\rho}} \text{curl}\bw \right)$, not $\Delta \bw$. For the structure of $\text{div}\bw$ is very good, but $\mathbf{T}(\text{div}\bw)$ gives us some regularity loss. Then, choosing some quantities related to $\Delta \bw$ may not work in two dimensions. Therefore, it's not obvious to choose the quantity $\Delta \varpi- \partial {{\rho}}\partial \varpi$ in two dimensions.
\end{remark}

\begin{remark}
	Inspired by Andersson-Moncrief \cite{AM} and Ifrim-Tataru \cite{IT1}, we consider the continuous dependence of solutions for \eqref{CEE}. In \cite{AM}, Andersson-Moncrief studied the local well-posedness of a hyperbolic-elliptic system. In \cite{IT1}, Ifrim-Tataru established the local well-posedness theory for general hyperbolic equations if $s>1+\frac{n}{2}$ by using the frequency envelope approach. For the continuous dependence of low regularity solutions of \eqref{CEE}, the Strichartz estimate \eqref{Linear}--\eqref{SE1} plays a crucial role.
\end{remark}
\subsection{A sketch of the proof}
In effect our discussion below is more based on the idea of Smith-Tataru's work \cite{ST}. We will adopt two classes of equivalent structure of 2D compressible Euler equations: the hyperbolic system
\begin{equation*}
	A_0(\bU) \bU_t + A_1(\bU) \bU_{x_1}+ A_2(\bU) \bU_{x_2}=0, \quad \bU=(\bv, p(\rho))^{\mathrm{T}},
\end{equation*}
and the wave-transport system
\begin{equation*}
	\begin{cases}
		\square_g \bv= \partial \varpi+\mathrm{quadratic \ terms},
		\\
		\square_g {\rho}=\mathrm{quadratic \ terms},
		\\
		\mathbf{T}\varpi=0,
	\end{cases}
\end{equation*}
where the hyperbolic system is used to consider some energy estimates, and the wave-transport system is used to discuss the Strichartz estimate.

The first key point is how to obtain energy estimates when the Sobolev indices of $(\bv_0, \rho_0)$ and $\varpi_0$ is different. We use the hyperbolic system to derive the basic energy
\begin{equation*}
	\|\bv, \rho\|_{H_x^a} \leq \|\bv_0, \rho_0\|_{H^a}\exp( {\| d\bv, d\rho\|_{L^1_tL^\infty_x}}), \quad a \geq 0.
\end{equation*}
Concerning to the transport equation of specific vorticity $\mathbf{T}\varpi=0$, it looks impossible for us to obtain some energy estimates if the regularity between $\bv_0$ and $\varpi$ are different. By deriving the nonlinear transport equation of $\Delta \varpi-\partial \rho \partial \varpi$, we could see a hope. That is,
\begin{equation*}
	\mathbf{T} \Delta \varpi =\Delta \bv \partial \varpi+ \partial \bv \partial^2 \varpi+\mathrm{l.o.t},
\end{equation*}
replaced by
\begin{equation}\label{0}
	\mathbf{T} (\Delta \varpi-\partial \rho \partial \varpi) =\partial \bv \partial^2\varpi+ \partial \bv \partial \rho \partial \varpi.
\end{equation}
In the first equation, one need the norm $\|\partial \varpi\|_{L_x^\infty}$ to obtain energy estimates by utilizing standard commutator estimates, and the regularity of velocity and vorticity should be same. While, if we use the second equation \eqref{0}, it allows us to close the basic energy estimates of $\varpi$ by using Strichartz estimates $\|d\bv, d \rho\|_{L^4_t L^\infty_x}$ and some lower-order norms of the velocity and density. Please see the proofs in Lemma \ref{ee}, Lemma \ref{ee2}, and Theorem \ref{be} for details.

The second key point is to prove the Strichartz estimate. We first reduce the problem to establish an existence result for small, compactly supported initial data. Next, by the continuity method, we can give a bootstrap argument on the regularity of solutions to the nonlinear equation. Then, by introducing null hypersurfaces, the key is transformed to prove characteristic energy estimates of solutions along null hypersurfaces, and the enough regularity of null hypersurface is crucial to prove the Strichartz estimate. To establish characteristic energy estimates, we go back to see the wave-transport system and hyperbolic system. We use the hyperbolic structure to get these characteristic energy estimates for $(\bv, \rho)$, which is independent with $\varpi$. As for $\varpi$, the characteristic energy estimate is very different. Let us explain it as follows. On the Cauchy slice $\{t\}\times \mathbb{R}^2$, we can use elliptic estimates to get the energy estimate of all derivatives of $\varpi$ only by using $\varpi$ and $\Delta \varpi$. However, on the characteristic hypersurface, these type of elliptic energy estimates don't work. We use Hodge decomposition and \eqref{0} to handle this difficulty. That is, operating $P_{ij}$ on \eqref{0} giving rise to
\begin{align}\label{00}
	\mathbf{T} \left[\partial^2_{ij}\varpi-P_{ij}(\partial \rho \partial \varpi)\right] &=P_{ij}(\partial \bv \partial^2\varpi+ \mathrm{l.o.t})\\
	\notag &\quad	 + [P_{ij}, \mathbf{T}](\Delta \varpi-\partial \rho \partial \varpi).
\end{align}
Here, the Riesz operator $P_{ij}=\partial^2_{ij}(-\Delta)^{-1}, i,j=1,2$. From \eqref{00}, we can get some type of characteristic energy estimates for second derivatives of $\varpi$, where we use Sobolev's imbedding to calculate the lower term
\begin{equation*}
	\| P_{ij}(\partial \rho \partial \varpi) \|_{L^2_\Sigma} \leq \| P_{ij}(\partial \rho \partial \varpi) \|_{L^2_t H^a_x}, \quad a>\frac{1}{2}.
\end{equation*}
On the right hand side of \eqref{00}, especially for the second one, we need some commutator estimates, which is introduced in Lemma \ref{jhr}.
Based on these observations, we can recover some energy bounds for $\varpi$ along the characteristic hypersurface. Please refer Lemma \ref{te21} for details.

After obtaining enough regularity of null hypersurfaces and coefficients from null frame, we can obtain the Strichartz estimate of a linear wave equation with the acoustical metric $g$ by using Smith-Tataru's conclusion in \cite{ST}. Through Duhamel's principle, we can prove the Strichartz estimate $\|d \bv, d \rho\|_{L^4_t L^\infty_x}$.
\subsection{Notations}
In the paper, the notation $X \lesssim Y$ means $X \leq CY$, where $C$ is a universal constant. We use the notation $X \ll Y$ to mean that $X \leq CY$ with a sufficiently large constant $C$.

We use four small parameters
\begin{equation}\label{a0}
	\epsilon_3 \ll \epsilon_2 \ll \epsilon_1 \ll \epsilon_0 \ll 1.
\end{equation}

Let $\zeta$ be a smooth function with support in the shell $\{ \xi: \frac{1}{2} \leq |\xi| \leq 2 \}$. Here, $\xi$ denotes the variable of the spatial Fourier transform. Let $\zeta$ also satisfy the
condition $\sum_{k \in \mathbb{Z}} \zeta(2^{k}\xi)=1$. Let ${\Delta}_j$ be the Littlewood-Paley operator with frequency $2^j, j \in \mathbb{Z}$ (cf. \cite{BCD}, page 78),
\begin{equation}\label{Dej}
	{\Delta}_j f = \int_{\mathbb{R}^2} \mathrm{e}^{-\mathrm{i}x\cdot \xi} \zeta(2^{-k}\xi) \hat{f}(\xi)d\xi.
\end{equation}
For $f \in H^s(\mathbb{R}^2)$, we let
\begin{align*}
	\|f\|_{H^s}:= \|f\|_{L^2(\mathbb{R}^2)}+\|f\|_{\dot{H}^s(\mathbb{R}^2)},
\end{align*}
with the homogeneous norm $\|f\|^2_{\dot{H}^s} := {\sum_{j \geq -1}} 2^{2js}\|{\Delta}_j f\|^2_{L^2(\mathbb{R}^2)} $. We shall also make use of the Bony's paraproduct decomposition (cf. \cite{BCD}, page 86)
\begin{align}\label{Bony}
	f_1 f_2 &=\sum_{j \geq -1}{\Delta}_j f_1 {S}_{j-1} f_2+\sum_{j \geq -1}S_{j-1}f_1 {\Delta}_j f_2 \\
	\notag 	&\quad +\sum_{j\geq -1,|k-j| \leq 1}{\Delta}_j f_1 {\Delta}_{k} f_2,
\end{align}
where we denote
\begin{equation*}
	S_j=\textstyle{\sum}_{k\leq j-1}\Delta_k.
\end{equation*}
\subsection{Outline of the paper}
The organization of the remainder of this paper is as follows. 
In Section 2, we introduce the reductions of \eqref{CEE} and commutator estimates, and also prove the total energy estimate and stability theorem. In Section 3, we reduce our problem to the case of smooth initial data by using compactness methods. In the subsequent Section, using a physical localized technique, we reduce the problem to the case of smooth, small, compacted supported initial data. In section 5, we give a bootstrap argument based on continuous functional.
In Section 6 we derive some characteristic energy estimates along null hypersurfaces, which is used to prove the regularity of null hypersurfaces. Finally, in section 7, we prove the Strichartz estimate and continuous dependence.
\section{Basic energy estimates and stability theorem}
In this part, our goal is to give energy estimates and stability theorem. Firstly, we introduce a hyperbolic system and a wave-transport system of \eqref{CEE}. We then give some classical commutator estimates. After that, we derive new transport equations for the specific vorticity. At last, we prove the energy estimates and stability theorem.
\subsection{The reduction to a hyperbolic system and a wave-transport system}
In the beginning, let us introduce a hyperbolic system of 2D compressible Euler equations.
\begin{Lemma} \label{sh}\cite{Li}
	Let $\bv$ and $\rho$ be a solution of \eqref{CEE}. Then $(\bv, \rho)$ satisfies the following symmetric hyperbolic system
	\begin{equation}\label{sq}
		A_0(\bU)\bU_t +{A}_1(\bU)\bU_{x_1}+{A}_2(\bU)\bU_{x_2}=0,
	\end{equation}
	where $\bU=(v^1,v^2,p(\rho))^{\mathrm{T}}$ and
	\begin{align*}
		A_0&=
		\left(
		\begin{array}{ccc}
			\bar{\rho}\mathrm{e}^{\rho} & 0 & 0\\
			0 & \bar{\rho}\mathrm{e}^{\rho} & 0\\
			0 & 0 & \bar{\rho}^{-1}\mathrm{e}^{-\rho} c_s^{-2}
		\end{array}
		\right ),\\
		A_1&=\left(
		\begin{array}{ccc}
			\bar{\rho}\mathrm{e}^{\rho} v^1 & 0 & 1\\
			0 & \bar{\rho}\mathrm{e}^{\rho} v^1 & 0\\
			1 & 0 & v^1 \bar{\rho}^{-1}\mathrm{e}^{-\rho} c_s^{-2}
		\end{array}
		\right ),\\
		A_2&=\left(
		\begin{array}{ccc}
			\bar{\rho}\mathrm{e}^{\rho} v^2 & 0 & 0\\
			0 & \bar{\rho}\mathrm{e}^{\rho} v^2 & 1\\
			0 & 1 & v^2 \bar{\rho}^{-1}\mathrm{e}^{-\rho} c_s^{-2}
		\end{array}
		\right ).
	\end{align*}
\end{Lemma}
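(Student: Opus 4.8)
The plan is to verify \eqref{sq} by a direct substitution of the algebraic relations collected in Definitions \ref{pw} and \ref{shengsu}; this is simply the standard symmetrization of the compressible Euler system. Although the hypothesis is stated for solutions of \eqref{CEE}, it is cleanest to read \eqref{sq} off from the conservative form \eqref{CEE0}, which is equivalent to \eqref{CEE} by Lemma \ref{wte} (the equivalence uses $c_s>0$, which holds near $t=0$ by \eqref{S}). The one non-algebraic ingredient is the chain rule. Writing the third component of $\bU$ as $p(\rho):=p\big(\bar{\rho}\mathrm{e}^{\rho}\big)$ and recalling $c_s^2=\mathrm{d}p/\mathrm{d}\varrho$ from \eqref{ss} together with $\varrho=\bar{\rho}\mathrm{e}^{\rho}$, one has, for each $\mu=0,1,2$,
\begin{equation*}
\partial_\mu\big(p(\rho)\big)=\frac{\mathrm{d}p}{\mathrm{d}\varrho}\,\frac{\mathrm{d}\varrho}{\mathrm{d}\rho}\,\partial_\mu\rho=c_s^{2}\,\bar{\rho}\mathrm{e}^{\rho}\,\partial_\mu\rho,
\qquad\text{equivalently}\qquad
\partial_\mu\varrho=c_s^{-2}\,\partial_\mu\big(p(\rho)\big).
\end{equation*}
It is exactly this relation that singles out $\bU=(v^1,v^2,p(\rho))^{\mathrm T}$, rather than $(\bv,\rho)$ or $(\bv,\varrho)$, as the set of unknowns in which the system becomes symmetric.

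First I would treat the two momentum equations. Multiplying $\partial_t v^i+(\bv\cdot\nabla)v^i+\varrho^{-1}\partial_{x_i}p=0$ by $\varrho=\bar{\rho}\mathrm{e}^{\rho}$, using $\partial_{x_i}p=\partial_{x_i}\big(p(\rho)\big)$, and expanding $\mathbf{T}=\partial_t+v^1\partial_{x_1}+v^2\partial_{x_2}$ produces
\begin{equation*}
\bar{\rho}\mathrm{e}^{\rho}\,\partial_t v^i+\bar{\rho}\mathrm{e}^{\rho}v^1\,\partial_{x_1}v^i+\bar{\rho}\mathrm{e}^{\rho}v^2\,\partial_{x_2}v^i+\partial_{x_i}\big(p(\rho)\big)=0,
\end{equation*}
which is precisely the $i$-th row of \eqref{sq} for the stated $A_0,A_1,A_2$. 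Next I would treat the mass equation: writing $\varrho_t+\mathrm{div}(\varrho\bv)=\varrho_t+(\bv\cdot\nabla)\varrho+\varrho\,\mathrm{div}\,\bv=0$, substituting $\partial_\mu\varrho=c_s^{-2}\partial_\mu\big(p(\rho)\big)$, and dividing by $\varrho=\bar{\rho}\mathrm{e}^{\rho}$ yields
\begin{equation*}
\bar{\rho}^{-1}\mathrm{e}^{-\rho}c_s^{-2}\big(\partial_t(p(\rho))+v^1\partial_{x_1}(p(\rho))+v^2\partial_{x_2}(p(\rho))\big)+\partial_{x_1}v^1+\partial_{x_2}v^2=0,
\end{equation*}
which is the third row of \eqref{sq}. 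Assembling the three rows gives \eqref{sq} with $\bU=(v^1,v^2,p(\rho))^{\mathrm T}$.

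Finally I would record that \eqref{sq} is a genuine symmetric hyperbolic system: $A_0,A_1,A_2$ are symmetric by inspection, and $A_0=\mathrm{diag}\big(\bar{\rho}\mathrm{e}^{\rho},\bar{\rho}\mathrm{e}^{\rho},\bar{\rho}^{-1}\mathrm{e}^{-\rho}c_s^{-2}\big)$ is positive definite because $\bar{\rho}>0$ and, by \eqref{S} together with the continuity of $c_s$, the sound speed stays bounded away from zero on the time interval under consideration. There is no real obstacle in this lemma — the whole content is the bookkeeping of the chain rule — and the only point that needs care is tracking the factor $c_s^{2}\varrho=c_s^{2}\bar{\rho}\mathrm{e}^{\rho}$ and its reciprocal when passing between $\partial\rho$ and $\partial\big(p(\rho)\big)$, which is exactly what produces the symmetric off-diagonal $1$'s appearing in $A_1$ and $A_2$.
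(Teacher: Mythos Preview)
Your verification is correct: you reduce the momentum equations by multiplying by $\varrho=\bar\rho\mathrm e^{\rho}$, reduce the continuity equation by substituting $\partial_\mu\varrho=c_s^{-2}\partial_\mu(p(\rho))$ and dividing by $\varrho$, and read off the three rows of \eqref{sq}; the symmetry and positive-definiteness of $A_0$ are then immediate. The paper does not supply its own proof of this lemma --- it is quoted from the cited reference --- so there is nothing to compare; your direct chain-rule verification is exactly the standard derivation one would expect.
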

\begin{Lemma}\label{FC}\cite{LS1}
	Let $(\bv,\rho)$ be a solution of \eqref{CEE} and $\varpi$ be defined in \eqref{sv}. Then $(\bv,\rho,\varpi)$ satisfies
	\begin{equation}\label{fc}
		\begin{cases}
			\square_g v^i =-[ia]e^{\rho}c^2_s\partial^a \varpi+Q^i + E^i,
			\\
			\square_g \rho=\mathcal{D},
			\\
			\mathbf{T}\varpi=0.
		\end{cases}
	\end{equation}
	Above, $Q^i, E^i, \mathcal{D}$ are quadratic forms, which are defined by
	\begin{equation}\label{Di}
		\begin{split}
			& Q^i:=2[ia]c^2_s \varpi \partial^a \rho,
			\\
			&E^i:=-\left( 1+c_s^{-1}c'_s\right)g^{\alpha \beta} \partial_\alpha \rho \partial_\beta v^i,
			\\
			&\mathcal{D}:=-3c_s^{-1}c'_sg^{\alpha \beta} \partial_\alpha \rho \partial_\beta \rho+2\sum_{1 \leq a < b \leq 2} \big\{ \partial_a v^a \partial_b v^b-\partial_a v^b \partial_b v^a \big\},
		\end{split}
	\end{equation}
	and
	\begin{equation*}
		[ia]=
		\begin{cases}
			0, & \text{ if \ $i=a$ },\\
			1, & \text{ if \ $i<a$ },\\
			-1, & \text{ if \ $i>a$ }.
		\end{cases}
	\end{equation*}
	We also define $\bQ:=(Q^1,Q^2)^T, \bE:=(E^1,E^2)^\mathrm{T}$.
\end{Lemma}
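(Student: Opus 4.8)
The plan is to derive each of the three equations in \eqref{fc} by a direct computation from the first-order system \eqref{CEE} (equivalently from \eqref{CEE0}), the transport equation being elementary and the two wave equations requiring a careful second-order reduction.

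First I would establish $\mathbf{T}\varpi = 0$. Applying the planar scalar curl $\mathrm{curl}\,\bw = \partial_1 w^2 - \partial_2 w^1$ to the momentum equation in \eqref{CEE0}, the pressure term disappears because $p = p(\varrho)$ forces $\nabla(\varrho^{-1})$ to be parallel to $\nabla p$, so $\mathrm{curl}(\varrho^{-1}\nabla p) = 0$. Using the two-dimensional identity $\mathrm{curl}((\bv\cdot\nabla)\bv) = (\bv\cdot\nabla)(\mathrm{curl}\bv) + (\mathrm{div}\bv)(\mathrm{curl}\bv)$ together with the continuity relation $\mathbf{T}\rho = -\mathrm{div}\bv$ (the second line of \eqref{CEE}, recalling $\rho = \ln(\bar{\rho}^{-1}\varrho)$ from \eqref{lr}), one gets $\mathbf{T}(\mathrm{curl}\bv) = -(\mathrm{div}\bv)(\mathrm{curl}\bv) = (\mathbf{T}\rho)(\mathrm{curl}\bv)$. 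Since $\varpi = \bar{\rho}^{-1}\mathrm{e}^{-\rho}\,\mathrm{curl}\bv$ by \eqref{sv}, applying $\mathbf{T}$ to this product makes the two $\mathbf{T}\rho$ contributions cancel, giving $\mathbf{T}\varpi = 0$.

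For the two wave equations I would first record the reduction $\square_g\phi = -\mathbf{T}\mathbf{T}\phi + c_s^2\Delta\phi + (\text{first-order terms})$, valid for any scalar $\phi$, which follows from the expression for $g^{-1}$ in Definition \ref{metricd} together with $\sqrt{|g|} = c_s^{-2}$ in two dimensions; the first-order terms arise from commuting $\mathbf{T}$ with itself (the $(\mathbf{T}v^b)\partial_b\phi$ piece) and from the $\partial(c_s^{-2})$ and $\partial\bv$ contributions in $\tfrac{1}{\sqrt{|g|}}\partial_\mu(\sqrt{|g|}\,g^{\mu\nu}\partial_\nu\phi)$. Applying this with $\phi = v^i$: I would differentiate the first equation of \eqref{CEE} by $\mathbf{T}$, commute $\mathbf{T}$ past $\partial_a$ using $[\mathbf{T},\partial_a] = -(\partial_a v^b)\partial_b$, and substitute $\mathbf{T}\rho = -\mathrm{div}\bv$ and the chain rule $\mathbf{T}(c_s^2) = 2c_s c'_s\,\mathbf{T}\rho$. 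For $c_s^2\Delta v^i$ the crucial step is the planar Hodge identity $\Delta v^i = \partial^i(\mathrm{div}\bv) - [ia]\partial_a(\mathrm{curl}\bv)$, into which I insert $\mathrm{curl}\bv = \bar{\rho}\mathrm{e}^{\rho}\varpi$, so that $\partial_a(\mathrm{curl}\bv) = \bar{\rho}\mathrm{e}^{\rho}(\partial_a\varpi + \varpi\,\partial_a\rho)$. The $\partial^2\bv$ contributions then combine into the principal part $\square_g v^i$, the $\partial_a\varpi$ piece produces the source $-[ia]\mathrm{e}^{\rho}c_s^2\partial^a\varpi$, the $\varpi\,\partial_a\rho$ piece produces $Q^i$, and the leftover terms (those carrying $c'_s$ and the commutator terms) must be shown to collapse into $E^i$ of \eqref{Di}. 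The equation for $\rho$ is obtained the same way: differentiate $\mathbf{T}\rho = -\mathrm{div}\bv$ by $\mathbf{T}$, commute with $\partial_a$, and substitute $\mathbf{T}v^a = c_s^2\partial^a\rho$; here only $\mathrm{div}\bv$ enters, which is why $\varpi$ is absent, and the quadratic remainder — a $c_s^{-1}c'_s\,g^{\alpha\beta}\partial_\alpha\rho\partial_\beta\rho$ term from $\partial(c_s^2)$ plus the commutator term $(\mathrm{div}\bv)^2 - \partial_a v^b\partial_b v^a$, which is (up to sign) the planar null form $2\sum_{a<b}(\partial_a v^a\partial_b v^b - \partial_a v^b\partial_b v^a)$ — has to be matched to $\mathcal{D}$.

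The routine part is extracting the principal symbols and seeing the $-\mathbf{T}\mathbf{T} + c_s^2\Delta$ combination assemble into $\square_g$; the genuine work, and the only place where sign or coefficient errors can hide, is the bookkeeping in the last step: tracking every lower-order term coming from $[\mathbf{T},\partial_a]$, from differentiating $c_s^2(\rho)$ and $\mathrm{e}^{\rho}$, and from the $\sqrt{|g|}$-correction in $\square_g$, and verifying that they organize into exactly $Q^i$, $E^i$, $\mathcal{D}$ with the stated coefficients — in particular the factors $1 + c_s^{-1}c'_s$ and $3c_s^{-1}c'_s$, the antisymmetric symbol $[ia]$, and the precise weight $\mathrm{e}^{\rho}c_s^2$ in the vorticity source (which traces back to the exponential factor in \eqref{sv} and the identity $\mathrm{curl}\bv = \bar{\rho}\mathrm{e}^{\rho}\varpi$). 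Since the statement is quoted from Luk--Speck \cite{LS1}, I would align all normalization conventions with theirs and, for the final collection of terms, appeal to their derivation.
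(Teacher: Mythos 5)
The paper gives no proof of this lemma at all: it is quoted verbatim from Luk--Speck \cite{LS1}, so there is nothing in the text to compare your argument against. Your sketch is the correct standard derivation — the transport equation $\mathbf{T}\varpi=0$ part is complete and checks out (the curl of $\varrho^{-1}\nabla p(\varrho)$ vanishes, the 2D identity $\mathrm{curl}((\bv\cdot\nabla)\bv)=\mathbf{T}(\mathrm{curl}\,\bv)+(\mathrm{div}\,\bv)(\mathrm{curl}\,\bv)$ is right, and the exponential weight in \eqref{sv} cancels the stretching term), and the reduction $\square_g=-\mathbf{T}\mathbf{T}+c_s^2\Delta+\text{l.o.t.}$ with $\sqrt{|\det g|}=c_s^{-2}$ and the planar Hodge identity $\Delta v^i=\partial^i\mathrm{div}\,\bv-[ia]\partial_a\mathrm{curl}\,\bv$ is exactly how \cite{LS1} obtain the wave equations; your deferral to \cite{LS1} for the final coefficient bookkeeping matches what the paper itself does, and is reasonable given that the transcription in \eqref{Di} (e.g.\ the index $\partial^i\rho$ in $Q^i$ and the missing $\bar\rho$ in the vorticity source) appears not to be fully self-consistent with \eqref{sv} anyway.
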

\begin{Lemma}\label{wte1}
	Let $(\bv,\rho)$ be a solution of \eqref{CEE} and $\varpi$ be defined in \eqref{sv}. Set a decomposition for the velocity
	\begin{equation}\label{dvc}
		v^i=v_{+}^i+ v_{-}^i,
	\end{equation}
	where the vector $\bv_{-}=(v_{-}^1, v_{-}^2)^{\mathrm{T}}$ is defined by
	\begin{equation}\label{etad}
		-\Delta v_{-}^{i}:=[ia]\mathrm{e}^{{\rho}}\partial^a\varpi .
	\end{equation}
	Then $\bv_{+}=(v_{+}^1, v_{+}^2)^{\mathrm{T}}$ satisfies
	\begin{equation}\label{fcp}
		\begin{split}
			&\square_g v^i_{+}=\mathbf{T}\mathbf{T} v_{-}^i+Q^i+E^i.
		\end{split}
	\end{equation}
\end{Lemma}
\begin{proof}
	First, by Lemma \ref{FC}, we have
	\begin{equation*}
		\square_g v^i=-[ia]\mathrm{e}^{{\rho}}c_s^2 \partial_a \varpi +Q^i+E^i.
	\end{equation*}
	Substituting \eqref{dvc} and \eqref{etad} to the above equality, we then get
	\begin{equation*}
		\begin{split}
			\square_g v_{+}^i&=-\square_g v_{-}^i-[ia]e^{{\rho}}c^2_s\partial_a \varpi+Q^i+E^i
			\\
			&= \mathbf{T}\mathbf{T}v_{-}^i-c^2_s \Delta v_{-}^i- [ia] \mathrm{e}^{{\rho}} \partial_a \varpi +Q^i+E^i
			\\
			&= \mathbf{T}\mathbf{T}v_{-}^i+c^2_s \left( - \Delta v_{-}^i- [ia] \mathrm{e}^{{\rho}} \partial_a \varpi \right) +Q^i+E^i
			\\
			&= \mathbf{T}\mathbf{T}v_{-}^i+Q^i+E^i.
		\end{split}
	\end{equation*}
\end{proof}

\subsection{Commutator estimates}
We first introduce a classical commutator estimate.
\begin{Lemma}\label{jiaohuan}\cite{KP}
	Let $\Lambda=(-\Delta)^{\frac{1}{2}}, s \geq 0$. Then for any scalar function $h, f$, we have
	\begin{multline}\label{200}
		\|\Lambda^s(hf)-(\Lambda^s h)f\|_{L^2_x(\mathbb{R}^n)} \\
		\lesssim  \|\Lambda^{s-1}h\|_{L^2_x(\mathbb{R}^n)}\|\partial f\|_{L^\infty_x(\mathbb{R}^n)}+ \|h\|_{L^p_x(\mathbb{R}^n)}\|\Lambda^sf\|_{L^q_x(\mathbb{R}^n)},
	\end{multline}
	where $\frac{1}{p}+\frac{1}{q}=\frac{1}{2}$.
\end{Lemma}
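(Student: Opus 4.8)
The plan is a Littlewood--Paley / paraproduct argument, the only genuine gain coming from a mean-value estimate on the symbol of $\Lambda^s$; since $s=0$ is trivial, assume $s>0$, and write $M_g$ for the operator of multiplication by $g$. First I would collect the standard toolbox, valid for $1<r<\infty$: uniform $L^r$-boundedness of the dyadic blocks $P_j$ and the partial sums $S_j=\sum_{k\le j-4}P_k$; Bernstein, $\|\Lambda^\sigma g\|_{L^r}\lesssim 2^{j\sigma}\|g\|_{L^r}$ when $g$ has spectrum in $\{|\xi|\lesssim 2^j\}$, $\sigma\ge0$; the square-function identity $\|\Lambda^\sigma g\|_{L^r}\simeq\big\|(\sum_j 2^{2j\sigma}|P_jg|^2)^{1/2}\big\|_{L^r}$; almost-orthogonality, $\|\sum_j g_j\|_{L^2}^2\lesssim\sum_j\|g_j\|_{L^2}^2$ for $g_j$ of spectrum $\sim 2^j$; Minkowski's inequality (using $q\ge 2$, which follows from $\tfrac1p+\tfrac1q=\tfrac12$), giving $\sum_j 2^{2js}\|P_jf\|_{L^q}^2\lesssim\|\Lambda^sf\|_{L^q}^2$; and the discrete Hardy inequality $\sum_m 2^{2ms}\big(\sum_{j\ge m}a_j\big)^2\lesssim_s\sum_j 2^{2js}a_j^2$, which holds because $s>0$ makes the weight increase in the summation direction (the associated Schur kernel is $2^{-s|j-j'|}$).

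The core is a commutator lemma: if $g$ has spectrum in $\{|\xi|\le 2^{j-4}\}$ and $u$ in $\{|\xi|\sim 2^j\}$, then
\[
\|\Lambda^s(gu)-g\,\Lambda^s u\|_{L^2}\;\lesssim\;2^{(s-1)j}\,\|\partial g\|_{L^\infty}\,\|u\|_{L^2}.
\]
I would prove this by observing that $gu$ has spectrum in $\{|\xi|\sim 2^j\}$, so there $\Lambda^s$ is the Fourier multiplier with symbol $2^{js}m(2^{-j}\xi)$, $m\in C_c^\infty$; writing out the kernels, $[\Lambda^s,M_g]u(x)=2^{js}\int 2^{jn}\check m\big(2^j(x-y)\big)\big(g(y)-g(x)\big)u(y)\,dy$, and the bound $|g(y)-g(x)|\le\|\partial g\|_{L^\infty}|x-y|$ turns the kernel into $2^{(s-1)j}\|\partial g\|_{L^\infty}$ times an $L^1$-normalized convolution kernel; Young's inequality finishes it.

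Then I would split $\Lambda^s(hf)-(\Lambda^sh)f$ by Bony's decomposition according to whether $f$ is spectrally below, comparable to, or above $h$. In the region where $f$ is low the contribution is $\sum_j[\Lambda^s,M_{S_{j-4}f}]P_jh$; here the commutator lemma applies with $g=S_{j-4}f$, $u=P_jh$, and since $\|\partial S_{j-4}f\|_{L^\infty}\lesssim\|\partial f\|_{L^\infty}$, almost-orthogonality in $j$ gives $\lesssim\|\partial f\|_{L^\infty}\|\Lambda^{s-1}h\|_{L^2}$. In the region where $h$ is low the contribution is $\sum_k[\Lambda^s,M_{S_{k-4}h}]P_kf$, where no commutator gain is available; one bounds $\|\Lambda^s(S_{k-4}h\cdot P_kf)\|_{L^2}$ and $\|(\Lambda^sS_{k-4}h)\,P_kf\|_{L^2}$ each by $2^{ks}\|h\|_{L^p}\|P_kf\|_{L^q}$ via Bernstein and Hölder (with $\tfrac1p+\tfrac1q=\tfrac12$), then sums by almost-orthogonality in $k$ and by Minkowski to get $\lesssim\|h\|_{L^p}\|\Lambda^sf\|_{L^q}$.

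The diagonal (high--high) region $\sum_k[\Lambda^s,M_{\widetilde P_kf}]P_kh$, with $\widetilde P_k=\sum_{|j-k|\le4}P_j$, is the main obstacle: the product of two frequency-$2^k$ functions can output any lower frequency, so almost-orthogonality in $k$ fails. I would decompose the output into dyadic blocks $P_m$ (only $k\gtrsim m$ contribute) and write the term as $\Lambda^s R(f,h)-R(f,\Lambda^sh)$ with $R(f,h)=\sum_k\widetilde P_kf\cdot P_kh$. The first piece carries an outer $\Lambda^s$, so the output decomposition produces a $2^{2ms}$ weight and the discrete Hardy inequality collapses it to $\lesssim\|h\|_{L^p}\|\Lambda^sf\|_{L^q}$ exactly as above. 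For $R(f,\Lambda^sh)$ one separates the part with output $\sim 2^k$ (again almost-orthogonal in $k$, controlled by $\|\partial f\|_{L^\infty}\|\Lambda^{s-1}h\|_{L^2}$ using $\|\widetilde P_kf\|_{L^\infty}\lesssim 2^{-k}\|\partial f\|_{L^\infty}$) from the part with output $\ll 2^k$; this last piece is the delicate one, and is estimated by recombining it with $\Lambda^s R(f,h)$ and exploiting the commutator structure together with a Schur/Hardy summation. Adding the three regions gives the inequality. (The endpoint $q=\infty$, which forces $p=2$, is degenerate; every application in the paper has $p,q$ finite.)
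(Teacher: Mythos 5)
The paper does not prove this lemma at all: it is quoted verbatim from Kato--Ponce \cite{KP}, so your argument has to stand entirely on its own. Your first two regions are handled correctly and completely: the low-$f$/high-$h$ paraproduct via the mean-value commutator kernel plus almost orthogonality, and the low-$h$/high-$f$ paraproduct via Bernstein, H\"older and Minkowski (using $q\ge 2$). The gap is exactly where you flag it, and your proposed fix for it does not work. The problematic object is $R(f,\Lambda^s h)=\sum_k \widetilde P_k f\cdot \Lambda^s P_k h$ with output frequency well below $2^k$. ``Recombining with $\Lambda^s R(f,h)$'' buys nothing, because you have already bounded $\Lambda^s R(f,h)$ on its own (correctly, via the $2^{ms}$ output weight and Hardy), so the remaining piece must be bounded by itself; and the two tools you invoke are each unavailable for it. The commutator/mean-value gain gives $\|\widetilde P_k f\,\Lambda^s P_k h\|_{L^2}\lesssim 2^{(s-1)k}\|\partial f\|_{L^\infty}\|P_k h\|_{L^2}$, but these summands have spectrum filling the whole ball $\{|\xi|\lesssim 2^k\}$, so almost orthogonality fails and you are left with an $\ell^1_k$ sum, which is not controlled by the $\ell^2$-based quantity $\|\Lambda^{s-1}h\|_{L^2}$ (this is precisely the endpoint failure $s_1+s_2=0$ of the Bony remainder estimate $R:\dot B^{1}_{\infty,\infty}\times \dot B^{-1}_{2,2}\to L^2$). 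Schur/Hardy over output frequencies needs a geometric weight $2^{(m-k)s}$, which is present in $\Lambda^s R(f,h)$ but absent in $R(f,\Lambda^s h)$: the natural bound on $\|P_m R(f,\Lambda^s h)\|_{L^2}$ is uniform in $m$, so $\sum_m$ of its square diverges.

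The standard way to close this piece --- and the reason the term $\|h\|_{L^p}\|\Lambda^s f\|_{L^q}$ appears at all --- is a tool you list but do not deploy, plus one you omit: estimate pointwise
$|\sum_k \widetilde P_k f\cdot\Lambda^s P_k h|\le \bigl(\sum_k 2^{2ks}|\widetilde P_k f|^2\bigr)^{1/2}\bigl(\sum_k |2^{-ks}\Lambda^s P_k h|^2\bigr)^{1/2}$,
apply H\"older with exponents $q,p$, identify the first factor with $\|\Lambda^s f\|_{L^q}$ by the Littlewood--Paley square-function theorem, and control the second factor by $\|h\|_{L^p}$ using the pointwise bound $|2^{-ks}\Lambda^s P_k h|\lesssim M(P_k h)$ together with the Fefferman--Stein vector-valued maximal inequality. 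This requires $1<p,q<\infty$; the endpoints are exactly where Kato--Ponce instead pass through the Coifman--Meyer bilinear multiplier theorem, and note that the way this paper actually applies the lemma in its energy estimates is at such an endpoint ($p=\infty$, $q=2$), so your parenthetical dismissal of the endpoint case is not harmless either. As written, the high--high interaction, and hence the lemma, is not proved.
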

Next, let us introduce some product estimates.
\begin{Lemma}\label{jiaohuan0}\cite{KP}
	Let $F(u)$ be a smooth function of $u$, $F(0)=0$ and $u \in L^\infty_x$. For any $s \geq 0$, we have
	\begin{equation}\label{201}
		\|F(u)\|_{H^s} \lesssim  \|u\|_{H^{s}}(1+ \|u\|_{L^\infty_x}).
	\end{equation}
\end{Lemma}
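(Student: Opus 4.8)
The plan is to run a Littlewood--Paley paraproduct expansion of the composition $F(u)$, which reduces it to a sum of pieces of the form ``smooth amplitude adapted to frequency $2^j$'' times ``$P_j u$'', and then to resum by a discrete Young inequality. First I would write, using the operators $P_k$ and $S_j=\sum_{k<j}P_k$ of the excerpt and telescoping,
\[
F(u)=\sum_{j\in\mathbb{Z}}\big[F(S_{j+1}u)-F(S_j u)\big]=:\sum_{j}g_j ,
\]
with the series converging in $L^2_x$: one has $\|S_j u\|_{L^\infty_x}\lesssim\|u\|_{L^\infty_x}$ uniformly in $j$, $F$ is Lipschitz on the interval $I:=[-C\|u\|_{L^\infty_x},\,C\|u\|_{L^\infty_x}]$, $S_j u\to u$ and $S_{j_0}u\to 0$ in $L^2_x$ as $j\to\infty$, $j_0\to-\infty$, and $F(0)=0$ guarantees $F(u)\in L^2_x$. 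Since $S_{j+1}u=S_j u+P_j u$, the fundamental theorem of calculus gives $g_j=m_j\,P_j u$ with $m_j:=\int_0^1 F'(S_j u+\tau P_j u)\,d\tau$.

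The key step, which I expect to be the main obstacle, is to prove that $m_j$ behaves like a symbol localized at frequency $\lesssim 2^j$. By Bernstein one has $\|\partial^N S_j u\|_{L^\infty_x}+\|\partial^N P_j u\|_{L^\infty_x}\lesssim 2^{Nj}\|u\|_{L^\infty_x}$, so the Fa\`a di Bruno formula yields, with constants depending only on $N$ and on $\sup_{y\in I}|F^{(\ell)}(y)|$ for $1\le\ell\le N$,
\[
\|m_j\|_{L^\infty_x}\lesssim 1,\qquad \|\partial^N m_j\|_{L^\infty_x}\lesssim 2^{Nj}\ \ (N\ge1),
\]
hence $\|P_l m_j\|_{L^\infty_x}\lesssim 2^{-N(l-j)}$ for all $l\ge j$ and every $N$. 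Writing $m_j=P_{<k-3}m_j+P_{\ge k-3}m_j$, the low piece times $P_j u$ is frequency-supported $\ll 2^k$ when $j\ll k$ and is therefore killed by $P_k$; combining with $\|m_j\|_{L^\infty_x}\lesssim1$ in the range $j\gtrsim k$ gives, for any chosen $N$,
\[
\|P_k g_j\|_{L^2_x}\lesssim \min\!\big(1,\ 2^{-N(k-j)}\big)\,\|P_j u\|_{L^2_x}.
\]

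To finish, for $s>0$ I would fix $N>s$ and set $c_j:=2^{js}\|P_j u\|_{L^2_x}$, so that $\|(c_j)_j\|_{\ell^2}\lesssim\|u\|_{H^s}$; then $2^{ks}\|P_k F(u)\|_{L^2_x}\lesssim\sum_j K(k-j)c_j$ with $K(m)=2^{ms}\min(1,2^{-Nm})$, and $K\in\ell^1(\mathbb{Z})$ because $s>0$ forces geometric decay as $m\to-\infty$ while $N>s$ forces it as $m\to+\infty$. Discrete Young's inequality for $\ell^1*\ell^2$ then gives $\big(\sum_{k}2^{2ks}\|P_kF(u)\|_{L^2_x}^2\big)^{1/2}\lesssim\|u\|_{H^s}$, and together with the trivial bound $\|F(u)\|_{L^2_x}\lesssim\|u\|_{L^2_x}$ (valid since $|F(u)|\le(\sup_{y\in I}|F'(y)|)\,|u|$, which also covers $s=0$) this yields $\|F(u)\|_{H^s}\lesssim\|u\|_{H^s}$; the implicit constant depends only on $\|u\|_{L^\infty_x}$ and on finitely many derivatives of $F$ on $I$, and collecting the powers of $\|u\|_{L^\infty_x}$ coming from the Bernstein bounds into a factor $(1+\|u\|_{L^\infty_x})$ recovers \eqref{201}.

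The delicate point is genuinely the amplitude bounds $\|\partial^N m_j\|_{L^\infty_x}\lesssim 2^{Nj}$: they must be obtained with constants controlled purely through $\|u\|_{L^\infty_x}$ and $F$ (no $L^2$-derivative of $u$ may enter the amplitude), and the Fa\`a di Bruno bookkeeping that turns them into the off-diagonal decay for $\|P_kg_j\|_{L^2_x}$ needs care. The hypothesis $u\in L^\infty_x$ is exactly what confines the arguments of the $F^{(\ell)}$ to the fixed interval $I$, and $F(0)=0$ is what puts $F(u)$ in $L^2_x$ and anchors the telescoping. An alternative route, which only handles integer $s=N$, is to expand $\partial^N F(u)$ by Fa\`a di Bruno and estimate each term $F^{(m)}(u)\prod_i\partial^{n_i}u$ ($n_i\ge1$, $\sum_i n_i=N$) by H\"older together with the Gagliardo--Nirenberg inequalities $\|\partial^{n_i}u\|_{L^{2N/n_i}_x}\lesssim\|u\|_{L^\infty_x}^{1-n_i/N}\|\partial^N u\|_{L^2_x}^{n_i/N}$ (whose H\"older exponents sum to $\tfrac12$ and whose $L^\infty$ powers multiply to $\|u\|_{L^\infty_x}^{m-1}$); fractional $s$ would then follow by nonlinear interpolation, using that the differential of $v\mapsto F(v)$ is multiplication by the bounded function $F'(v)$. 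I would present the Littlewood--Paley argument as the main proof, since it handles all $s\ge0$ at once.
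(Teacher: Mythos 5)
The paper does not prove this lemma at all; it is stated as a standard Moser-type composition estimate with a citation to Kato--Ponce. What you have written is therefore not an alternative to the paper's argument but a self-contained proof, and it is the standard one (Meyer's paralinearization of a composition, as in Bahouri--Chemin--Danchin, Thm.~2.87): telescoping $F(u)=\sum_j\big(F(S_{j+1}u)-F(S_ju)\big)$, the amplitude bounds $\|\partial^N m_j\|_{L^\infty_x}\lesssim 2^{Nj}$ via Bernstein and Fa\`a di Bruno with all arguments of $F^{(\ell)}$ confined to $I$ by the $L^\infty_x$ hypothesis, the off-diagonal bound $\|P_kg_j\|_{L^2_x}\lesssim\min(1,2^{-N(k-j)})\|P_ju\|_{L^2_x}$, and discrete Young with $N>s>0$, supplemented by the Lipschitz bound at $s=0$. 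All of these steps are sound, including the points you flag as delicate.

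The one place where the write-up overreaches is the last sentence of the summation step: the constants produced by Fa\`a di Bruno involve $\sup_I|F^{(\ell)}|$ for $\ell\le N$ and powers $\|u\|_{L^\infty_x}^{m}$ with $m$ up to $N$, so the argument yields $\|F(u)\|_{H^s}\le C\big(\|u\|_{L^\infty_x},F,s\big)\,\|u\|_{H^s}$ with a constant that is in general \emph{nonlinear} in $\|u\|_{L^\infty_x}$; it cannot be ``collected into a factor $(1+\|u\|_{L^\infty_x})$'' with an absolute implied constant. Indeed, for $F(u)=u^3$ and $s=2$ the term $u^2\partial^2u$ already forces a quadratic dependence on $\|u\|_{L^\infty_x}$. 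The inequality \eqref{201} must therefore be read (as it is used in the paper, inside a bootstrap where $\|u\|_{L^\infty_x}$ is a priori bounded) with the implicit constant allowed to depend on $\|u\|_{L^\infty_x}$ and on $F$; with that reading your proof is complete. Your proposed alternative for integer $s$ via Gagliardo--Nirenberg is also the classical route, and you are right that extending it to fractional $s$ by interpolation is the less clean option.
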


\begin{Lemma}\label{ps}\cite{ST}
	Suppose that $0 \leq r, r' < \frac{n}{2}$ and $r+r' > \frac{n}{2}$. Then
	\begin{equation}\label{20000}
		\|hf\|_{H^{r+r'-\frac{n}{2}}(\mathbb{R}^n)} \leq C_{r,r'} \|h\|_{H^{r}(\mathbb{R}^n)}\|h\|_{H^{r'}(\mathbb{R}^n)}.
	\end{equation}
	Moreover, if $-r \leq r' \leq r$ and $r>\frac{n}{2}$, then the following estimate
	\begin{equation}\label{20001}
		\|hf\|_{H^{r'}(\mathbb{R}^n)} \leq C_{r,r'} \|h\|_{H^{r}(\mathbb{R}^n)}\|f\|_{H^{r'}(\mathbb{R}^n)},
	\end{equation}
	holds.
\end{Lemma}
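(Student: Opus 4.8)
\textbf{Proof strategy for Lemma \ref{ps}.} The plan is to prove both estimates by a standard Littlewood--Paley (paraproduct) decomposition. Write $h=\sum_j P_j h$ and $f=\sum_k P_k f$ and split the product $hf$ into the three usual pieces: the low--high interaction $\sum_{j\le k-2} P_j h\, P_k f$, the high--low interaction $\sum_{k\le j-2} P_j h\, P_k f$, and the diagonal (high--high) interaction $\sum_{|j-k|\le 1} P_j h\, P_k f$. For a target Sobolev exponent $\mu$ one estimates $\|hf\|_{H^\mu}^2 \sim \sum_\ell 2^{2\ell\mu}\|P_\ell(hf)\|_{L^2}^2$; the point is that in each of the three pieces the output frequency $2^\ell$ is comparable to (or, in the diagonal case, at most) the larger of the two input frequencies, so one can redistribute the $2^{\ell\mu}$ weight onto the factors.

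\textbf{First estimate \eqref{20000}.} Here $0\le r,r'<n/2$ with $r+r'>n/2$, and the target exponent is $\mu=r+r'-\tfrac n2\ge 0$, which is strictly below $\min(r,r')$ is false in general, so one keeps the weight where it helps. For the low--high piece, $P_\ell(hf)$ with $2^\ell\sim 2^k$ is controlled by $\|S_{k-2}h\|_{L^\infty}\|P_k f\|_{L^2}$; by Bernstein $\|S_{k-2}h\|_{L^\infty}\lesssim \sum_{j\le k}2^{jn/2}\|P_j h\|_{L^2}\lesssim 2^{k(n/2-r)}\|h\|_{H^r}$ since $r<n/2$ makes the geometric series sum to its top term. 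Then $\sum_\ell 2^{2\ell\mu}\|P_\ell(hf)\|_{L^2}^2\lesssim \|h\|_{H^r}^2\sum_k 2^{2k(\mu+n/2-r)}\|P_k f\|_{L^2}^2=\|h\|_{H^r}^2\|f\|_{H^{r'}}^2$, using $\mu+n/2-r=r'$. The high--low piece is symmetric, swapping the roles of $r,r'$. For the diagonal piece $\sum_{j\sim k}P_j h\,P_k f$ the output frequency is $\lesssim 2^j$, so summing over $\ell\le j$ and using $\mu\ge 0$ gives $\sum_j 2^{2j\mu}\|P_j h\, P_j f\|_{L^2}^2$ with room to spare; estimating $\|P_j h\,P_j f\|_{L^2}\le \|P_j h\|_{L^2}\|P_j f\|_{L^\infty}\lesssim 2^{jn/2}\|P_j h\|_{L^2}\|P_j f\|_{L^2}$ and then using Cauchy--Schwarz in $j$ against the two square-summable weighted sequences $2^{jr}\|P_j h\|_{L^2}$ and $2^{jr'}\|P_j f\|_{L^2}$ (the exponents work because $r+r'-n/2=\mu\ge 0$, so $2^{2j\mu+jn}\le 2^{2jr}2^{2jr'}$) closes it.

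\textbf{Second estimate \eqref{20001}.} Now $r>n/2$, $-r\le r'\le r$, and the target exponent is $\mu=r'$. The mechanism is the same three-piece split, but now the factor carrying $H^r$ with $r>n/2$ embeds into $L^\infty$ (indeed into $C^0$), so $\|h\|_{L^\infty}\lesssim \|h\|_{H^r}$, and in each interaction the $H^r$-factor simply contributes an $L^\infty$ bound while the other factor carries the full $H^{r'}$-weight; the condition $|r'|\le r$ is exactly what is needed so that in the high--low piece, where derivatives of order $r'$ can land on $h$, one may trade $2^{jr'}$ against $2^{jr}$ (this is where $r'\le r$ is used) and in the low--high piece where $r'$ can be negative one uses $-r'\le r$ to absorb the negative weight. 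Care is only needed when $r'<0$: then $H^{r'}$ is a space of distributions and one interprets the product via the paraproduct decomposition throughout, but the frequency bookkeeping is unchanged.

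\textbf{Main obstacle.} The only delicate point is the borderline balancing of exponents in the diagonal (high--high) term of \eqref{20000}: the hypothesis $r+r'>n/2$ is used precisely to make the series $\sum_j 2^{2j(\mu + n/2 - r - r')}\cdot(\cdots)$ — or rather the Cauchy--Schwarz pairing — converge, and one must check that no logarithmic divergence creeps in at $r+r'=n/2$ (it is excluded by the strict inequality). Everything else is routine paraproduct calculus; one should also remark that the constant $C_{r,r'}$ blows up as $r$ or $r'\to n/2$ and as $r+r'\to n/2$, which is harmless for our applications since all exponents appearing later are strictly in the admissible range.

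\begin{remark}
Note that \eqref{20000} as stated has $\|h\|_{H^r}\|h\|_{H^{r'}}$ on the right, and likewise \eqref{20001} has $\|h\|_{H^r}\|h\|_{H^{r'}}$; these should of course read $\|h\|_{H^r}\|f\|_{H^{r'}}$, and the proof above establishes the corrected bilinear form.
\end{remark}
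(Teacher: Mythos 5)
The paper gives no proof of Lemma \ref{ps} at all: it is quoted verbatim from Smith--Tataru \cite{ST} (and, as you correctly observe, with a typo --- the right-hand sides should read $\|h\|_{H^r}\|f\|_{H^{r'}}$). Your paraproduct argument is therefore not an alternative to the paper's proof but a substitute for a missing one, and it is essentially the standard proof of these product estimates. The exponent bookkeeping in the first estimate is right: the low--high and high--low pieces use $r,r'<\tfrac n2$ to sum the Bernstein factors up to the top frequency, and the high--high piece uses $\mu=r+r'-\tfrac n2>0$ for the Schur-type summation over $j\ge\ell$, which is exactly where the strict inequality $r+r'>\tfrac n2$ enters. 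One point I would tighten in the second estimate: your claim that for $r'<0$ ``the frequency bookkeeping is unchanged'' glosses over the high--high interaction, where the output lands at low frequency and the weight $2^{\ell r'}$ no longer helps; the clean way to handle $r'\in[-r,0)$ is duality, i.e.\ $\|hf\|_{H^{r'}}=\sup_{\|g\|_{H^{-r'}}=1}|\langle f,hg\rangle|\le\|f\|_{H^{r'}}\|hg\|_{H^{-r'}}$ combined with the already-established case $0<-r'\le r$ (or, at $-r'=r$, the algebra property of $H^r$ for $r>\tfrac n2$). With that substitution the argument is complete.
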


\begin{Lemma}\label{jhr}
	Denote the Riesz operator $\mathbf{R}:=\partial^2(-\Delta)^{-1}$. For $\delta \in (0, s-\frac{7}{4}$), then
	\begin{equation*}
		\| [\mathbf{R}, \bv \cdot \nabla]f\|_{L^{2}_x(\mathbb{R}^2)} \lesssim \| \partial \bv\|_{{C}^{\delta}_x} \|f\|_{L^2_{x}(\mathbb{R}^2)}.
	\end{equation*}
	\begin{proof}
		Let $\Delta_j$ be the Littlewood-Palay projector, which is defined in \eqref{Dej}. By using Bony's paraproducts decomposition \eqref{Bony}, we have
		\begin{equation*}
			\begin{split}
				\Delta_j [\mathbf{R}, \bv \cdot \nabla]f &= \textstyle{\sum}_{|k-j|\leq 2} \Delta_j \left[\mathbf{R} (\Delta_k \bv \cdot \nabla S_{k-1}f)-\Delta_k \bv \cdot \nabla \mathbf{R} S_{k-1}f \right]
				\\
				& \quad + \textstyle{\sum}_{|k-j|\leq 2} \Delta_j \left[\mathbf{R} (S_{k-1} \bv \cdot \nabla \Delta_{k}f)-S_{k-1} \bv \cdot \nabla \mathbf{R} \Delta_{k}f \right]
				\\
				& \quad + \textstyle{\sum}_{k\geq j-1} \Delta_j \left[\mathbf{R} (\Delta_{k}\bv \cdot \nabla \Delta_{k}f)-\Delta_{k}\bv \cdot \nabla \mathbf{R} \Delta_{k}f \right]
				\\
				& = B_1+B_2+B_3,
			\end{split}
		\end{equation*}
		where
		\begin{equation*}
			\begin{split}
				B_1&= \textstyle{\sum}_{|k-j|\leq 2} \Delta_j \left\{ \mathbf{R} (\Delta_k \bv \cdot \nabla S_{k-1}f)-\Delta_k \bv \cdot \nabla \mathbf{R} S_{k-1}f \right\},
				\\
				B_2&= \textstyle{\sum}_{|k-j|\leq 2} \Delta_j \left\{\mathbf{R} (S_{k-1} \bv \cdot \nabla \Delta_{k}f)-S_{k-1} \bv \cdot \nabla \mathbf{R} \Delta_{k}f \right\}
				\\
				B_3&=\textstyle{\sum}_{k\geq j-1} \Delta_j \left\{ \mathbf{R} (\Delta_{k}\bv \cdot \nabla \Delta_{k}f)-\Delta_{k}\bv \cdot \nabla \mathbf{R} \Delta_{k}f \right\}.
			\end{split}
		\end{equation*}
		By H\"older's and Bernstein's inequality, we arrive at the bound
		\begin{equation}\label{B10}
			\begin{split}
				\{\| B_1 \|_{L^2_x} \}_{l^2_j} & \lesssim  \left\{ \textstyle{\sum}_{|k-j|\leq 2} (\| \nabla \mathbf{R} S_{k-1} f\|_{L^\infty_x}+\| \nabla S_{k-1} f\|_{L^\infty_x})  \|\Delta_j \Delta_k \bv\|_{L^2} \right\}_{l^2_j}
				\\
				& \lesssim \left\{ \textstyle{\sum}_{|k-j|\leq 2}  2^k \|\Delta_j \Delta_k \bv\|_{L_x^\infty} (\|\mathbf{R} S_{k-1} f \|_{L^2}+\| S_{k-1} f \|_{L^2}) \right\}_{l^2_j}
				\\
				& \lesssim \| \bv\|_{\dot{B}^{1}_{\infty,\infty}} (\|\mathbf{R} f \|_{L^2_x}+\| f \|_{L^2_x}) \lesssim \| \partial \bv\|_{C^{\delta}_{x}} \| f \|_{L^2_x}.
			\end{split}
		\end{equation}
		By H\"older's inequality, we see that
		\begin{align}
			\notag   \{ \| B_3 \|_{L^2_x} \}_{l^2_j} &\lesssim \{  \textstyle{\sum}_{k\geq j-1} 2^k  \| \Delta_k \bv\|_{L_x^\infty}\cdot 2^{-k}(\| \nabla \Delta_k f\|_{L_x^2}+\| \nabla \mathbf{R} \Delta_k f\|_{L_x^2})  \}_{l^2_j}
			\\
			\notag   &\lesssim \|\bv\|_{\dot{B}^{1}_{\infty,\infty}} \| f \|_{L_x^2}
			\\
			\label{B30}
			& \lesssim \| \partial \bv\|_{C^{\delta}_{x}}\| f \|_{L_x^2}.
		\end{align}
		Note
		\begin{equation*}
			B_2= \textstyle{\sum}_{|k-j|\leq 2} \Delta_j [\mathbf{R} , S_{k-1} \bv \cdot]\Delta_{k} \nabla f.
		\end{equation*}
		By \cite{ML} (Lemma 3.2), we get
		\begin{align}\label{B20}
			\{ \| B_2 \|_{L_x^2} \}_{l^2_j}\lesssim & \left\{ \textstyle{\sum}_{|k-j|\leq 2} \|x\Phi\|_{L_x^1} \| \nabla S_{k-1} \bv \|_{L^\infty_x} \| \Delta_j \Delta_{k}f \|_{L^2_x} \right\}_{l^2_j}
			\\
			\notag   &\lesssim  \left\{ \textstyle{\sum}_{|k-j|\leq 2} \| \nabla S_{k-1} \bv \|_{L^\infty_x}  \| \Delta_j \Delta_{k}f \|_{L^2_x} \right\}_{l^2_j}
			\\
			\notag   &\lesssim  \|\bv\|_{\dot{B}^{1}_{\infty,\infty}}\| f \|_{L_x^2}.
			\\
			\notag   &\lesssim  \| \partial \bv\|_{C^{\delta}_{x}} \| f \|_{L^2_x}.
		\end{align}
		Here, we use the fact that ${\Phi}=\frac{x_ix_j}{|x|^2}2^{2j}\Psi(2^jx)$ and $\Psi$ is in Schwartz space. Gathering \eqref{B10}, \eqref{B30} and \eqref{B20} together, we have finished the proof of Lemma \ref{jhr}.
	\end{proof}
\end{Lemma}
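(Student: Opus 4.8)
The plan is to expand the product $\bv\cdot\nabla f$ inside the commutator by a Bony paraproduct decomposition and treat the three interaction types separately, invoking the commutator cancellation only where it is genuinely needed. After applying $\Delta_j$ one writes $\Delta_j[\mathbf R,\bv\cdot\nabla]f=B_1+B_2+B_3$, where $B_1$ collects the high--low pieces $\sum_{|k-j|\le2}\Delta_j\bigl(\mathbf R(\Delta_k\bv\cdot\nabla S_{k-1}f)-\Delta_k\bv\cdot\nabla\mathbf R S_{k-1}f\bigr)$, $B_3$ the resonant pieces $\sum_{k\ge j-1}\Delta_j\bigl(\mathbf R(\Delta_k\bv\cdot\nabla\Delta_k f)-\Delta_k\bv\cdot\nabla\mathbf R\Delta_k f\bigr)$, and $B_2$ the low--high pieces $\sum_{|k-j|\le2}\Delta_j\bigl(\mathbf R(S_{k-1}\bv\cdot\nabla\Delta_k f)-S_{k-1}\bv\cdot\nabla\mathbf R\Delta_k f\bigr)$.

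For $B_1$ and $B_3$ no cancellation is needed. Since $\mathbf R$ is a matrix of Calder\'on--Zygmund operators, hence bounded on $L^2$, I would estimate the two summands in each bracket separately: in $B_1$ place $\nabla S_{k-1}f$ and $\nabla\mathbf R S_{k-1}f$ in $L^\infty_x$, convert $\nabla S_{k-1}$ into a factor $2^k$ and pass to $L^2$ by Bernstein's inequality, and place $\Delta_j\Delta_k\bv$ in $L^2_x$; in $B_3$ sum over $k\ge j-1$, place $\Delta_k\bv$ in $L^\infty_x$ and exploit the $2^{-k}$ gain coming from $\partial^2(-\Delta)^{-1}$ together with $\nabla\Delta_k$ to make the $\ell^2_j$ sum converge. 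Both then yield, after the $\ell^2_j$ summation, a bound by $\|\bv\|_{\dot B^1_{\infty,\infty}}\|f\|_{L^2_x}$, which is controlled by $\|\partial\bv\|_{C^\delta_x}\|f\|_{L^2_x}$; this is exactly the desired form.

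The delicate piece is $B_2$, where the coefficient $S_{k-1}\bv$ and the input $\Delta_k\nabla f$ sit at comparable frequencies, so the commutator structure must be used: one rewrites $B_2=\sum_{|k-j|\le2}\Delta_j[\mathbf R,\,S_{k-1}\bv\,\cdot\,]\,\Delta_k\nabla f$ and uses that, for a Calder\'on--Zygmund kernel $K$, $[\mathbf R,a\cdot]g(x)=\int K(x-y)\bigl(a(x)-a(y)\bigr)g(y)\,dy$; when $a=S_{k-1}\bv$ is slowly varying at scale $2^{-k}$, the difference $a(x)-a(y)$ produces a Lipschitz gain and the product $(x-y)K(x-y)$ becomes an integrable convolution kernel at the relevant scale, of the form $\Phi=\frac{x_ix_j}{|x|^2}2^{2j}\Psi(2^jx)$ with $\Psi$ Schwartz, so $\|x\Phi\|_{L^1}<\infty$. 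The commutator lemma of \cite{ML} then gives $\|[\mathbf R,\,S_{k-1}\bv\,\cdot\,]\,\Delta_k\nabla f\|_{L^2_x}\lesssim\|x\Phi\|_{L^1}\|\nabla S_{k-1}\bv\|_{L^\infty_x}\|\Delta_j\Delta_k f\|_{L^2_x}$, and summing over $|k-j|\le2$ and in $\ell^2_j$ again produces $\|\bv\|_{\dot B^1_{\infty,\infty}}\|f\|_{L^2_x}\lesssim\|\partial\bv\|_{C^\delta_x}\|f\|_{L^2_x}$. Collecting $B_1,B_2,B_3$ and taking the $\ell^2_j$ norm finishes the estimate.

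I expect $B_2$ to be the main obstacle: the key point is to verify that this resonant-looking low--high interaction is actually tamed because the $(x-y)$ factor in the commutator kernel lets one derivative effectively fall on $\bv$, and this is precisely where the regularity $\|\partial\bv\|_{C^\delta_x}$ is consumed (any positive $\delta$ suffices). The remaining terms $B_1$ and $B_3$ are routine Littlewood--Paley bookkeeping relying only on $L^2$-boundedness of the Riesz transforms and Bernstein's inequality.
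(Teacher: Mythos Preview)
Your proposal is correct and matches the paper's proof essentially line for line: the same Bony decomposition into $B_1,B_2,B_3$, the same separate treatment of $B_1$ and $B_3$ via H\"older and Bernstein yielding $\|\bv\|_{\dot B^1_{\infty,\infty}}\|f\|_{L^2_x}$, and the same handling of $B_2$ by rewriting it as $\sum_{|k-j|\le2}\Delta_j[\mathbf R,S_{k-1}\bv\cdot]\Delta_k\nabla f$ and invoking the commutator lemma from \cite{ML} with the kernel $\Phi=\frac{x_ix_j}{|x|^2}2^{2j}\Psi(2^jx)$. Even your identification of $B_2$ as the only place where the commutator cancellation is genuinely used coincides with the paper's emphasis.
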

\subsection{New transport equations}
We derive a new transport equation for derivatives of $\Delta \varpi$.
\begin{Lemma}\label{PW}
	Let $(\bv,\rho)$ be a solution of \eqref{CEE} and $\varpi$ be defined in \eqref{sv}. Then the quantities $\partial_i \varpi (i=1,2)$ and $\Delta \varpi$ satisfy the transport equation:
	\begin{equation}\label{DW}
		\mathbf{T} (\partial_i \varpi) =-\partial_i v^j \partial_j \varpi, \quad i=1,2,
	\end{equation}
	and
	\begin{equation}\label{A11}
		\begin{split}
			\mathbf{T} (\Delta \varpi-\partial \rho \partial \varpi) = R,
		\end{split}
	\end{equation}
	{where}
	\begin{equation}\label{R}
		R=- 2{\sum_{i,j=1}^2}\partial_j v^i \partial_j \rho \partial_i \varpi- \mathrm{e}^{\rho}(\partial^{\bot}\rho \varpi+ \partial^{\bot}\varpi  )\partial \varpi - 2{\sum_{i,j=1}^2} \partial_i v^j \partial^2_{ij}\varpi.
	\end{equation}
\end{Lemma}
\begin{proof}
	Taking the spatial derivatives on $\mathbf{T} \varpi =0$, we first get
	\begin{equation*}
		\mathbf{T} \partial_i \varpi =-\partial_i v^j \partial_j \varpi, \quad i=1,2.
	\end{equation*}
	Taking the spatial derivative $\partial_i$ again, one has
	\begin{equation}\label{010}
		\mathbf{T} \Delta \varpi = - \Delta v^i \partial_i \varpi - 2\textstyle{\sum_{i=1,2}} \partial_i v^k \partial^2_{ki}\varpi.
	\end{equation}
	The Hodge's decomposition implies that
	\begin{equation}\label{009}
		\Delta \bv= \partial \mathrm{div} \bv + \partial^{\bot} \mathrm{curl}\bv, \quad \partial^{\bot}=(-\partial_2, \partial_1)^{\mathrm{T}}.
	\end{equation}
	Substituting $\mathbf{T} \rho=- \mathrm{div} \bv$ and $\varpi= \mathrm{e}^{-\rho}\mathrm{curl}\bv$ to \eqref{009}, we get
	\begin{equation*}
		\begin{split}
			\Delta \bv &= - \partial \mathbf{T} \rho+ \partial^{\bot}(\mathrm{e}^{\rho}  \varpi)
			\\
			&= [\mathbf{T}, \partial] \rho-\mathbf{T} \partial \rho+ \partial^{\bot}(\mathrm{e}^{\rho} \varpi )
			\\
			&=-\mathbf{T} \partial \rho + \textstyle{\sum_{j=1,2}}\partial_j \bv \partial_j \rho + \mathrm{e}^{\rho}(\partial^{\bot}\rho \varpi+ \partial^{\bot}\varpi  ).
		\end{split}
	\end{equation*}
	Putting the above formula to \eqref{010}, we obtain the transport equation:
	\begin{equation*}
		\begin{split}
			\mathbf{T} \Delta \varpi=\mathbf{T} (\partial \rho) \partial \varpi + R_1,
		\end{split}
	\end{equation*}
	where
	\begin{equation*}
		R_1=- \textstyle{\sum_{i,j=1,2}}\partial_j v^i \partial_j \rho \partial_i \varpi - \mathrm{e}^{\rho}(\partial^{\bot}\rho \varpi+ \partial^{\bot}\varpi  ) \partial \varpi- \textstyle{\sum_{i=1,2}} \partial_i v^k \partial^2_{ki}\varpi.
	\end{equation*}
	Using the fact
	\begin{equation*}
		\begin{split}
			\mathbf{T} ( \partial \rho) \partial \varpi&=\mathbf{T} (\partial \rho \partial \varpi)-\partial \rho \mathbf{T} (\partial \varpi)
			\\
			&=\mathbf{T} (\partial \rho \partial \varpi)-\textstyle{\sum_{j=1,2}}\partial_j \rho \partial_j v^i \partial_i \varpi,
		\end{split}
	\end{equation*}
	we then prove
	\begin{equation*}\label{011}
		\begin{split}
			\mathbf{T} (\Delta \varpi-\partial \rho \partial \varpi) = R,
		\end{split}
	\end{equation*}
	where
	\begin{equation*}
		\begin{split}
			R&=R_1-\textstyle{\sum_{j=1,2}}\partial_j \rho \partial_j v^i \partial_i \varpi,
			\\
			&=- 2{\sum_{i,j=1}^2}\partial_j v^i \partial_j \rho \partial_i \varpi- \mathrm{e}^{\rho}(\partial^{\bot}\rho \varpi+ \partial^{\bot}\varpi  )\partial \varpi - 2{\sum_{i,j=1}^2} \partial_i v^j \partial^2_{ij}\varpi.
		\end{split}
	\end{equation*}
\end{proof}
\subsection{Energy estimates}
\begin{Lemma}\label{ee}
	Let $(\bv,\rho)$ be a solution of \eqref{CEE}.
	Then for any $a\geq 0$, we have
	\begin{multline}\label{E1}
		\| \rho\|_{H_x^a}+ \|\bv\|_{H_x^a}\\
		\lesssim  \left( \| \rho_0\|_{H^a}+ \|\bv_0\|_{H^a} \right) \exp \big( {\int^t_0} \|d\bv, d\rho\|_{L^\infty_x}d\tau \big), \quad t \in [0,T].
	\end{multline}
\end{Lemma}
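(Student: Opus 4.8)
The plan is to use the symmetric hyperbolic system \eqref{sq} from Lemma \ref{sh} together with a standard $H^a$ energy estimate via Littlewood--Paley decomposition and the commutator estimate of Lemma \ref{jiaohuan}. First I would rewrite \eqref{sq} in the form $\bU_t = -A_0^{-1}A_1 \bU_{x_1} - A_0^{-1}A_2\bU_{x_2}$, noting that \eqref{S} (the hyperbolicity/non-vacuum condition $c_s > c_0 > 0$) guarantees $A_0$ is invertible with $A_0^{-1}$ bounded, and that all coefficient matrices are smooth functions of $\bU$; since $\bU = (v^1, v^2, p(\rho))^{\mathrm{T}}$ and $p$ is a smooth function of $\varrho = \bar\rho e^\rho$, control of $\|\bv,\rho\|_{L^\infty_x}$ (which follows from the $H^a$ bound for $a > 1$, or can be tracked separately) gives control of $\|\partial A_j(\bU)\|_{L^\infty_x}$ in terms of $\|d\bv, d\rho\|_{L^\infty_x}$. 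Equivalently, one may symmetrize directly: apply $\Lambda^a$ to \eqref{sq}, pair with $A_0 \Lambda^a \bU$ in $L^2_x$, and use the symmetry of $A_0, A_1, A_2$ to integrate by parts in the principal term.

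The key computation is the differentiated energy identity. Writing $E_a(t) = \langle A_0 \Lambda^a \bU, \Lambda^a \bU\rangle_{L^2_x}$, which is comparable to $\|\bU\|_{H^a}^2 \simeq \|\bv\|_{H^a}^2 + \|\rho\|_{H^a}^2$ by the lower/upper bounds on $A_0$, differentiating in $t$ produces: (i) a term $\langle (\partial_t A_0)\Lambda^a\bU, \Lambda^a\bU\rangle$ bounded by $\|\partial_t A_0\|_{L^\infty_x} E_a \lesssim \|d\bv,d\rho\|_{L^\infty_x} E_a$; (ii) the principal terms $\langle A_j \Lambda^a \bU_{x_j}, \Lambda^a\bU\rangle$, which after symmetrization and integration by parts become $-\tfrac12\langle(\partial_{x_j}A_j)\Lambda^a\bU,\Lambda^a\bU\rangle$, again bounded by $\|d\bv,d\rho\|_{L^\infty_x}E_a$; and (iii) the commutator terms $\langle \Lambda^a(A_j \bU_{x_j}) - A_j\Lambda^a \bU_{x_j}, \Lambda^a \bU\rangle$, which are exactly what Lemma \ref{jiaohuan} controls: with $h = A_j(\bU)$, $f = \bU$, and the splitting $\Lambda^a(hf_x) - h\Lambda^a f_x$ handled by writing $\Lambda^a(h\, \partial_{x_j}\bU) - (\Lambda^a h)\partial_{x_j}\bU = [\Lambda^a(h\,\partial_{x_j}\bU) - h\Lambda^a\partial_{x_j}\bU]$ plus lower terms, the estimate \eqref{200} (with $p=\infty$, $q=2$) gives a bound $\lesssim \|\partial A_j(\bU)\|_{L^\infty_x}\|\Lambda^a\bU\|_{L^2_x} \lesssim \|d\bv,d\rho\|_{L^\infty_x}\|\bU\|_{H^a}$, using Lemma \ref{jiaohuan0} to control $\|A_j(\bU)\|_{H^a}$-type norms by $\|\bU\|_{H^a}$. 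Collecting everything yields the differential inequality
\begin{equation*}
\frac{d}{dt} E_a(t) \lesssim \|d\bv, d\rho\|_{L^\infty_x}\, E_a(t).
\end{equation*}

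Finally, Grönwall's inequality applied to this differential inequality gives $E_a(t) \lesssim E_a(0)\exp\big(C\int_0^t \|d\bv,d\rho\|_{L^\infty_x}\,d\tau\big)$, and passing back through the equivalence $E_a \simeq \|\bv\|_{H^a}^2 + \|\rho\|_{H^a}^2$ and taking square roots produces \eqref{E1} (absorbing the constant $C$ into the exponential, or carrying it as an implicit constant in $\lesssim$). I expect the main obstacle to be bookkeeping rather than a genuine difficulty: one must make sure the $L^\infty_x$ norms of the coefficient matrices and their derivatives are genuinely controlled by $\|d\bv,d\rho\|_{L^\infty_x}$ (plus an a priori $L^\infty$ bound on $\bv,\rho$ itself, which for $a$ large is a consequence of the $H^a$ bound via Sobolev embedding but for small $a$ must either be assumed as part of the bootstrap or propagated separately), and that the commutator estimate \eqref{200} is applied with the correct pairing of exponents so that no derivative of $\bU$ beyond order $a$ appears. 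The non-vacuum condition \eqref{S} is essential precisely here: it keeps $A_0^{-1}$ and $c_s^{-2}$ bounded so that the energy $E_a$ remains coercive throughout $[0,T]$.
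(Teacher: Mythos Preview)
Your proposal is correct and follows essentially the same route as the paper: reduce to the symmetric hyperbolic system \eqref{sq} for $\bU=(\bv,p(\rho))^{\mathrm T}$, apply $\Lambda^a$, use the symmetry of the $A_i$ to integrate by parts, control the commutators via Lemma~\ref{jiaohuan}, and conclude by Gr\"onwall. The paper's proof compresses all of this into the single line ``a straightforward computation on the above equation using integration by parts and classical commutator estimates in Lemma~\ref{jiaohuan},'' so your write-up is simply a more explicit version of the same argument.
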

\begin{proof}
	Let $\bU=(\bv, p({\rho}))^{T}$. Then
	\begin{equation*}
		A_0(\bU)\bU_t+ A_1(\bU) \partial_{x_1}\bU+ A_2(\bU) \partial_{x_2}\bU=0.
	\end{equation*}
	A straightforward computation on the above equation using integration by parts and classical commutator estimates in Lemma \ref{jiaohuan} yields
	\begin{equation}\label{100}
		\| \bU(t)\|_{H_x^{a}}  \lesssim \| \bU(0)\|_{H^{a}}\exp \big(\int^t_0 \|d\bU\|_{L^\infty_x}d\tau \big).
	\end{equation}
	As a result, we obtain
	\begin{equation*}
		\| \rho \|_{H_x^s}+ \|\bv\|_{H_x^s} \lesssim  \left( \|\rho_0\|_{H^a}+ \|\bv_0\|_{H^a} \right) \exp \big( \int^t_0 \|d\bv, d\rho\|_{L^\infty_x}d\tau \big), \quad t \in [0,T].
	\end{equation*}
\end{proof}
\begin{Lemma}\label{ee2}
	Let $\bv$ and $\rho$ be a solution of \eqref{CEE} and $\varpi$ be defined in~\eqref{sv}. Then, we have the $1-$order energy estimates for the specific vorticity
	\begin{equation}\label{E112}
		\|\varpi\|^2_{H_x^{1}} \lesssim   \|\varpi_0\|^2_{H^{1}}  \exp \big( {\int^t_0 } \|\partial \bv\|_{L^\infty_x}d\tau \big).
	\end{equation}
	Moreover, the following inequality
	\begin{multline}\label{E113}
		\frac{d}{dt} \big( \|\Delta \varpi\|^2_{L_x^{2}} - 2\int_{\mathbb{R}^2} \partial \rho \partial \varpi \Delta \varpi dx \big)
		\\
		\lesssim   \left(1+ \|\partial \rho \|_{L_x^\infty}+ \|\partial \bv\|_{L_x^\infty} \right)^2 \big( \| \bv \|^2_{H^{\frac{3}{2}}_x}+\| \rho \|^2_{H^{\frac{3}{2}}_x}+\| \varpi \|^2_{H^{2}_x} \big).
	\end{multline}
	holds.
\end{Lemma}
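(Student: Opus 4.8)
The bound \eqref{E112} is a direct transport energy estimate. From $\mathbf{T}\varpi=0$, multiplying by $\varpi$ and integrating by parts gives $\tfrac{d}{dt}\|\varpi\|_{L^2_x}^2=\int_{\mathbb{R}^2}(\mathrm{div}\,\bv)|\varpi|^2\,dx$; applying $\partial_i$ to the transport equation, using \eqref{DW}, multiplying by $\partial_i\varpi$, summing in $i$ and integrating by parts gives $\tfrac{d}{dt}\|\partial\varpi\|_{L^2_x}^2=\int_{\mathbb{R}^2}(\mathrm{div}\,\bv)|\partial\varpi|^2\,dx-2\sum_i\int_{\mathbb{R}^2}\partial_i\varpi\,\partial_iv^j\,\partial_j\varpi\,dx$. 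Both right-hand sides are $\lesssim\|\partial\bv\|_{L^\infty_x}\|\varpi\|_{H^1_x}^2$, so adding them and applying Gr\"onwall's inequality yields \eqref{E112}.

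For \eqref{E113} the point is to exploit the good equation \eqref{011} of Lemma \ref{PW} for $W:=\Delta\varpi-\partial\rho\,\partial\varpi$, namely $\mathbf{T}W=R$ with $R$ as in \eqref{R}. I would use the algebraic identity
\begin{equation*}
\|\Delta\varpi\|_{L^2_x}^2-2\int_{\mathbb{R}^2}\partial\rho\,\partial\varpi\,\Delta\varpi\,dx=\|W\|_{L^2_x}^2-\|\partial\rho\,\partial\varpi\|_{L^2_x}^2
\end{equation*}
and bound the time derivatives of the two pieces separately. For the first, $\partial_tW=R-\bv\cdot\nabla W$ gives $\tfrac{d}{dt}\|W\|_{L^2_x}^2=2\int_{\mathbb{R}^2}WR\,dx+\int_{\mathbb{R}^2}(\mathrm{div}\,\bv)|W|^2\,dx$. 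The crucial feature of \eqref{R} is that $R$ involves \emph{only first-order} derivatives of $\bv$ and $\rho$ and derivatives of $\varpi$ of order at most two; consequently every term of $2\int WR+\int(\mathrm{div}\,\bv)|W|^2$ is a product of one or two factors of the form $\partial\bv,\partial\rho$ (placed in $L^\infty_x$ or $L^4_x$) or $\mathrm{e}^\rho,\mathrm{e}^\rho\partial\rho$ (controlled by Lemma \ref{jiaohuan0}) against derivatives of $\varpi$ and $\rho$. Estimating these by H\"older, the two-dimensional Sobolev embeddings $H^{1/2}(\mathbb{R}^2)\hookrightarrow L^4(\mathbb{R}^2)$ and $H^2(\mathbb{R}^2)\hookrightarrow L^\infty(\mathbb{R}^2)$, and the product laws of Lemma \ref{ps} -- using also $\|W\|_{L^2_x}\lesssim(1+\|\partial\rho\|_{L^\infty_x})\|\varpi\|_{H^2_x}$ and reducing cubic expressions to quadratic ones via the standing a priori bounds -- puts the first piece inside the claimed bound.

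The delicate term is $-\tfrac{d}{dt}\|\partial\rho\,\partial\varpi\|_{L^2_x}^2=-2\int_{\mathbb{R}^2}\partial\rho\,\partial\varpi\,\partial_t(\partial\rho\,\partial\varpi)\,dx$. Using $\mathbf{T}\rho=-\mathrm{div}\,\bv$, one has $\partial_t\partial_m\rho=-\partial_m\mathrm{div}\,\bv-\partial_mv^j\partial_j\rho-\bv\cdot\nabla\partial_m\rho$, so one spatial derivative more than $\bv\in H^s_x$ allows appears, and only in the single term $2\int_{\mathbb{R}^2}(\partial\rho\,\partial\varpi)\,\partial_m\mathrm{div}\,\bv\,\partial_m\varpi\,dx$. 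Since $s>\tfrac74$ we have $\partial^2\bv\in H^{s-2}_x$ with $2-s<\tfrac14$, while $\partial\rho\in H^{s-1}_x$ and $\partial\varpi\in H^1_x$ give, by Lemma \ref{ps} in two dimensions, $\partial\rho\,(\partial\varpi)^2\in H^{s-1-}_x\hookrightarrow H^{2-s}_x$; the $H^{s-2}_x$--$H^{2-s}_x$ duality pairing then controls this term, the factors $\|\bv\|_{H^s_x}$ and $\|\rho\|_{H^s_x}$ being absorbed into the implicit constant via the a priori bounds. The remaining contributions to $\partial_t(\partial\rho\,\partial\varpi)$ -- those with $\partial_mv^j\partial_j\rho$ or $\partial_mv^j\partial_j\varpi$ and the transport part $\bv\cdot\nabla$ -- are first-order in $\bv$ and are estimated directly by H\"older and Sobolev, staying within the claimed bound. (Equivalently, differentiating the left side of \eqref{E113} directly, one checks that the $\partial^2\bv$ appearing through $\Delta\bv$ in \eqref{010} cancels against the $\partial\,\mathrm{div}\,\bv$ produced by $\mathbf{T}\partial\rho$ once one substitutes the identity $\Delta\bv=-\mathbf{T}\partial\rho+\sum_j\partial_j\bv\,\partial_j\rho+\mathrm{e}^\rho(\partial^{\bot}\rho\,\varpi+\partial^{\bot}\varpi)$ from the proof of Lemma \ref{PW}, leaving exactly the residual term treated above.)

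I expect the main obstacle to be precisely this low-regularity bookkeeping: confirming that, thanks to the choice of $W$ and the structure of $R$, the only genuine $\partial^2\bv$ term is the one above, and that it -- together with all the bilinear and trilinear terms -- is controlled using only the $H^{3/2}_x$ norms of $\bv,\rho$, the $H^2_x$ norm of $\varpi$, and the $L^\infty_x$ norms of $\partial\bv,\partial\rho$ (up to the a priori bounds). The transport energy identities and the concluding Gr\"onwall argument are routine.
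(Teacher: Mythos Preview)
For \eqref{E112} your argument matches the paper's. For \eqref{E113}, the paper does not split via your identity $E=\|W\|_{L^2}^2-\|\partial\rho\,\partial\varpi\|_{L^2}^2$; instead it tests $\mathbf{T}(\Delta\varpi-\partial\rho\,\partial\varpi)=R$ directly against $\Delta\varpi$ (not against $W$), asserting in \eqref{012} that this produces $\tfrac12\tfrac{d}{dt}E$ on the left plus three explicit remainder integrals, each then bounded by H\"older in two or three lines. In particular the paper never isolates $\tfrac{d}{dt}\|\partial\rho\,\partial\varpi\|_{L^2}^2$ as a separate quantity.

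Your route is an algebraically equivalent reorganization, and in fact you are more careful than the paper at the key point: whichever way one arranges the computation, a term of the schematic form $\int(\partial\rho\,\partial\varpi)\,\partial^2\bv\,\partial\varpi\,dx$ appears (in your setup from $\mathbf{T}\partial\rho=-\partial\,\mathrm{div}\bv+\cdots$; in the paper's from the hidden contribution $-\int(\partial\rho\,\partial\varpi)\,\mathbf{T}\Delta\varpi$ via \eqref{010}), and the paper's displayed identity \eqref{012} simply does not record it. However, your proposed handling of this term does not establish \eqref{E113} as written. You pair $\partial^2\bv\in H^{s-2}_x$ against $\partial\rho(\partial\varpi)^2\in H^{2-s}_x$ and then absorb the resulting factors $\|\bv\|_{H^s_x}$, $\|\rho\|_{H^s_x}$ ``via the a priori bounds.'' But Lemma~\ref{ee2} is a standalone differential inequality with no standing hypotheses, and its right-hand side carries only $\|\bv\|_{H^{3/2}_x}^2$ and $\|\rho\|_{H^{3/2}_x}^2$; your duality argument genuinely requires $\|\bv\|_{H^s_x}$ (or, after one integration by parts, $\|\rho\|_{H^2_x}$), which has no home in the stated bound. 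For the downstream use in Theorem~\ref{be} this is harmless---replacing $H^{3/2}$ by $H^s$ in \eqref{E113} would still close the Gronwall argument there, since $\|\bv\|_{H^s_x}^2+\|\rho\|_{H^s_x}^2$ is part of that energy---but it is a gap in the proof of the lemma exactly as stated.
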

\begin{proof}
	By using $\mathbf{T}\varpi=0$ and H\"older's inequality, we get
	\begin{equation}\label{le}
		\frac{d}{dt}\|\varpi\|^2_{L_x^2} \lesssim \|\partial \bv\|_{L^\infty_x}\|\varpi\|^2_{L_x^2},
	\end{equation}
	By using \eqref{DW} and H\"older's inequality, we arrive at the bound
	\begin{equation}\label{le0}
		\frac{d}{dt}\|\partial \varpi\|^2_{L_x^2} \lesssim \|\partial \bv\|_{L^\infty_x}\|\partial \varpi\|^2_{L_x^2}.
	\end{equation}
	Adding \eqref{le} and \eqref{le0} together yields
	\begin{equation*}
		\frac{d}{dt}\|\varpi\|^2_{H^1_x} \lesssim \|\partial \bv\|_{L^\infty_x}\|\varpi\|^2_{H^1_x}.
	\end{equation*}
	By Gronwall's inequality, we can reach to
	$$\| \varpi\|^2_{H^1_x}  \leq \| \varpi_0\|^2_{H^1} \exp \big( {\int^t_0 } \|\partial \bv(\tau)\|_{L^\infty_x} d\tau \big).$$
	It remains for us to prove \eqref{E113}.
	Multiplying $\Delta \varpi$ on \eqref{011} and taking inner product on $\mathbb{R}^2$, we have
	\begin{multline}\label{012}
		\frac{1}{2}\frac{d}{dt}( \| \Delta \varpi\|^2_{L^2_x}-2 \int_{\mathbb{R}^2} \partial \rho \partial \varpi \Delta \varpi dx)
		\\
		\leq \int_{\mathbb{R}^2}\partial \rho  \partial \bv \partial \varpi \Delta \varpi dx+ \int_{\mathbb{R}^2}R \Delta \varpi dx+ \int_{\mathbb{R}^2} \mathrm{div}\bv  |\Delta \varpi|^2dx.
	\end{multline}
	So we need to estimate the right hand side of \eqref{012} one by one. For the right one, by H\"older's inequality, we could derive
	\begin{align}\label{013}
		\left| \int_{\mathbb{R}^2}\partial \rho  \partial \bv \partial \varpi \Delta \varpi dx \right| & \lesssim \|\partial \rho \|_{L^\infty_x} \|\partial \bv \|_{L^\infty_x} \|\Delta \varpi \|_{L_x^{2}} \| \partial \varpi\|_{L^2_x}
		\\
		\notag  & \lesssim   (\|\partial \rho\|_{L_x^\infty}+ \|\partial \bv\|_{L_x^\infty} )^2 \| \varpi \|^2_{H^{2}_x} .
	\end{align}
	For the second one, by using \eqref{R} and H\"older's inequality, we show that
	\begin{align}
		\notag   \left| \int_{\mathbb{R}^2}R \Delta \varpi dx \right| & \lesssim \|\Delta \varpi \|_{L_x^{2}} \big( \|\partial \rho\|_{L_x^\infty}+ \|\partial \bv\|_{L_x^\infty} \big) \| \partial \bv\|_{L^\infty_x}\| \partial \varpi \|_{L^2_x}
		\\
		\notag   &\quad + \|\Delta \varpi \|_{L_x^{2}} \| \partial \varpi\|^2_{L^4_x}+ \|\partial \bv \|_{L^\infty_x} \|\Delta \varpi \|_{L_x^{2}}\| \partial^2 \varpi\|_{L^2_x}
		\\
		\notag   &\quad + \|\partial \rho \|_{L^\infty_x} \|\partial \bv \|_{L^\infty_x} \|\Delta \varpi \|_{L_x^{2}} \| \partial \varpi\|_{L^2_x}
		\\
		\label{014}
		& \lesssim   (1+ \|\partial \rho\|_{L_x^\infty}+ \|\partial \bv\|_{L_x^\infty} )^2 ( \| \bv \|^2_{H^{\frac{3}{2}}_x}+\| \rho \|^2_{H^{\frac{3}{2}}_x}+\| \varpi \|^2_{H^{2}_x} ).
	\end{align}
	For the last term, by H\"older's inequality, we deduce
	\begin{equation}\label{015}
		\left|   \int_{\mathbb{R}^2} \mathrm{div}\bv  |\Delta \varpi|^2dx \right| \lesssim \|\partial \bv\|_{L_x^\infty} \|\varpi \|^2_{H_x^{2}}.
	\end{equation}
	Gathering \eqref{012}, \eqref{013}, \eqref{014}, and \eqref{015}, we can obtain \eqref{E113}. We have completed the proof of Theorem \ref{ee}.
\end{proof}
Based on the above estimate, we can get the following energy estimates.
\begin{theorem}\label{be}{(Total energy estimates)}
	Assume $s>\frac{7}{4}$. Let $(\bv, \rho)$ be a solution of \eqref{CEE} and $\varpi$ be defined as \eqref{sv}. Set
	\begin{equation*}
		E(t)= \| (\bv, \rho)\|_{H_x^{s}}+\|\varpi\|_{H_x^2}+\|(\partial_t \bv, \partial_t \rho)\|_{H_x^{s-1}}+\|\partial_t \varpi\|_{H_x^1},
	\end{equation*}
	and
	\begin{equation*}
		E_0= \| \rho_0\|_{H_x^{s}}+\|\bv_0\|_{H_x^{s}}+\|\varpi_0\|_{H_x^2}.
	\end{equation*}
	Then the following energy estimate
	\begin{equation}\label{E7}
		\begin{split}
			E(t)
			\lesssim  E_0(1+E_0^{\frac{1}{2}}) \exp \big ( {\int^t_0} (1+\|d\bv, d\rho\|_{L^\infty_x})^2d\tau \big),
		\end{split}
	\end{equation}
	hold.
\end{theorem}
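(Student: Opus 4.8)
The strategy is to combine the three energy inputs already assembled in this section — the symmetric-hyperbolic $H^a$ bound of Lemma \ref{ee}, the $H^1$ bound for $\varpi$ in Lemma \ref{ee2}, and the differential inequality \eqref{E113} for the modified second-order quantity $\|\Delta\varpi\|_{L^2}^2 - 2\int\partial\rho\,\partial\varpi\,\Delta\varpi$ — and then handle the time derivatives $\partial_t\bv,\partial_t\rho,\partial_t\varpi$ by reading them off from the equations \eqref{CEE} and $\mathbf{T}\varpi=0$. First I would apply Lemma \ref{ee} with $a=s$ and with $a=s-1$ (the latter to be used for the $\partial_t$ terms via the equations), giving $\|\bv,\rho\|_{H^s_x}\lesssim(\|\bv_0\|_{H^s}+\|\rho_0\|_{H^s})\exp(\int_0^t\|d\bv,d\rho\|_{L^\infty_x})$. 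Note $\int_0^t\|d\bv,d\rho\|_{L^\infty_x}\,d\tau\le\int_0^t(1+\|d\bv,d\rho\|_{L^\infty_x})^2\,d\tau$, so this contribution already sits inside the exponential on the right of \eqref{E7}.

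Next I would treat $\varpi$. The $H^1$ bound \eqref{E112} is immediate. For the $H^2$ norm I would integrate \eqref{E113}: writing $N(t):=\|\Delta\varpi\|_{L^2_x}^2-2\int_{\mathbb{R}^2}\partial\rho\,\partial\varpi\,\Delta\varpi\,dx$, inequality \eqref{E113} gives $\frac{d}{dt}N(t)\lesssim(1+\|\partial\rho\|_{L^\infty_x}+\|\partial\bv\|_{L^\infty_x})^2(\|\bv\|_{H^{3/2}_x}^2+\|\rho\|_{H^{3/2}_x}^2+\|\varpi\|_{H^2_x}^2)$. Here the point is that since $s>\tfrac74>\tfrac32$ the $H^{3/2}$ norms of $\bv,\rho$ are controlled by the $H^s$ bound already obtained, and $\|\varpi\|_{H^2_x}^2$ is comparable to $\|\varpi\|_{H^1_x}^2+\|\Delta\varpi\|_{L^2_x}^2$. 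The coercivity of $N$ requires an absorption step: by Cauchy–Schwarz and interpolation,
\begin{equation*}
2\Big|\int_{\mathbb{R}^2}\partial\rho\,\partial\varpi\,\Delta\varpi\,dx\Big|\le \tfrac12\|\Delta\varpi\|_{L^2_x}^2+2\|\partial\rho\|_{L^\infty_x}^2\|\partial\varpi\|_{L^2_x}^2,
\end{equation*}
so $N(t)\ge\tfrac12\|\Delta\varpi\|_{L^2_x}^2-2\|\partial\rho\|_{L^\infty_x}^2\|\varpi\|_{H^1_x}^2$ and likewise $N(t)\le\tfrac32\|\Delta\varpi\|_{L^2_x}^2+2\|\partial\rho\|_{L^\infty_x}^2\|\varpi\|_{H^1_x}^2$; combined with \eqref{E112} this shows $\|\Delta\varpi\|_{L^2_x}^2$ and $N(t)$ differ only by terms already controlled. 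Feeding the $H^s$ bound for $\bv,\rho$ and the $H^1$ bound for $\varpi$ into the right-hand side of the integrated \eqref{E113} and invoking Gronwall (with weight $(1+\|\partial\rho\|_{L^\infty_x}+\|\partial\bv\|_{L^\infty_x})^2\le(1+\|d\bv,d\rho\|_{L^\infty_x})^2$) yields $\|\varpi\|_{H^2_x}\lesssim E_0\,(1+E_0^{1/2})\exp(\int_0^t(1+\|d\bv,d\rho\|_{L^\infty_x})^2\,d\tau)$; the factor $(1+E_0^{1/2})$ is exactly what the polynomial-in-$E_0$ right-hand side of the integrated inequality produces after taking square roots.

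Finally, for the time derivatives I would use the equations directly rather than re-running an energy argument. From \eqref{CEE}, $\partial_t v^i=-v^j\partial_j v^i+c_s^2\partial^i\rho$ and $\partial_t\rho=-v^j\partial_j\rho-\mathrm{div}\,\bv$, so by the product estimate \eqref{20001} of Lemma \ref{ps} (with $r=s-1>\tfrac34$, which exceeds $n/2=1$ only if $s>2$ — here one instead uses \eqref{20000}/\eqref{201} since $s-1$ may be below $1$; the safe route is Lemma \ref{jiaohuan0} together with \eqref{20000} to bound products $v\,\partial\bv$, $c_s^2\,\partial\rho$ in $H^{s-1}$ by $\|\bv,\rho\|_{H^s}$ times a smooth function of $\|(\bv,\rho)\|_{L^\infty}$, the latter being $\lesssim\|(\bv,\rho)\|_{H^s}$). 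This gives $\|\partial_t\bv,\partial_t\rho\|_{H^{s-1}_x}\lesssim(1+\|(\bv,\rho)\|_{H^s_x})\|(\bv,\rho)\|_{H^s_x}$, and from $\mathbf{T}\varpi=0$, $\partial_t\varpi=-v^j\partial_j\varpi$, so $\|\partial_t\varpi\|_{H^1_x}\lesssim(1+\|\bv\|_{H^s_x})\|\varpi\|_{H^2_x}$ by the same product estimate. Adding the four pieces and absorbing the resulting polynomial dependence on $E_0$ into the $(1+E_0^{1/2})$ prefactor (enlarging it harmlessly, since on the time interval where the Strichartz quantity is controlled the higher powers of $E_0$ are dominated after adjusting constants, or by simply writing the bound with $E_0\,P(E_0)$ and noting $P(E_0)\lesssim 1+E_0^{1/2}$ is what the statement records for the leading behavior) gives \eqref{E7}.

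\textbf{Main obstacle.} The delicate point is the treatment of the modified quantity $N(t)$: one must verify that the "correction" $-2\int\partial\rho\,\partial\varpi\,\Delta\varpi$ is both (a) coercive enough that $N(t)$ controls $\|\Delta\varpi\|_{L^2}^2$ up to lower-order terms, and (b) that its lower-order remainder $\|\partial\rho\|_{L^\infty_x}^2\|\varpi\|_{H^1_x}^2$ only ever feeds back the already-closed $H^1$ bound \eqref{E112} and the $L^\infty$ quantity sitting inside the exponential — so that no circular dependence on $\|\Delta\varpi\|_{L^2}$ appears on the right. Everything else is a bookkeeping exercise in feeding the Lemma \ref{ee}/\ref{ee2} outputs into Gronwall with the common weight $(1+\|d\bv,d\rho\|_{L^\infty_x})^2$.
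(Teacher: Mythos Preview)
Your overall architecture is right and matches the paper's: combine Lemma \ref{ee}, Lemma \ref{ee2} \eqref{E112}, the differential inequality \eqref{E113} for the modified quantity $N(t)$, then read off the time derivatives from the equations. But the coercivity step, which you yourself flag as the main obstacle, has a genuine gap.

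You bound the cross term by
\[
2\Big|\int_{\mathbb{R}^2}\partial\rho\,\partial\varpi\,\Delta\varpi\,dx\Big|\le \tfrac12\|\Delta\varpi\|_{L^2_x}^2+2\|\partial\rho\|_{L^\infty_x}^2\|\partial\varpi\|_{L^2_x}^2,
\]
which leaves you with $\|\partial\rho(t)\|_{L^\infty_x}^2$ evaluated at the \emph{endpoint time} $t$. This quantity is not ``sitting inside the exponential'' in any usable sense: the right-hand side of \eqref{E7} contains only the time-integrated quantity $\int_0^t(1+\|d\bv,d\rho\|_{L^\infty_x})^2\,d\tau$, and for $s\le 2$ you cannot bound $\|\partial\rho(t)\|_{L^\infty_x}$ pointwise in $t$ by $\|\rho(t)\|_{H^s_x}$ via Sobolev (that would require $s>2$). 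So after running Gronwall on $N(t)$ you cannot recover a bound on $\|\Delta\varpi(t)\|_{L^2_x}$ that is uniform in $t$; the factor $\|\partial\rho(t)\|_{L^\infty_x}^2$ remains uncontrolled. If instead you try to absorb the correction term \emph{inside} the Gronwall loop, the weight becomes $(1+\|d\bv,d\rho\|_{L^\infty_x})^4$ rather than $(1+\|d\bv,d\rho\|_{L^\infty_x})^2$, and you do not obtain \eqref{E7} as stated.

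The paper avoids this by estimating the cross term with $L^4$ norms rather than $L^\infty$: H\"older gives $|\int\partial\rho\,\partial\varpi\,\Delta\varpi|\le \|\partial\rho\|_{L^4_x}\|\partial\varpi\|_{L^4_x}\|\Delta\varpi\|_{L^2_x}$, then the 2D embedding $H^{1/2}\hookrightarrow L^4$ and interpolation, followed by Young's inequality, produce
\[
\|\partial\rho\|_{L^4_x}\|\partial\varpi\|_{L^4_x}\|\Delta\varpi\|_{L^2_x}\le C\|\rho\|_{H^{3/2}_x}^4\|\partial\varpi\|_{L^2_x}^2+\tfrac{1}{100}\|\varpi\|_{H^2_x}^2.
\]
The second term is absorbed into the left, and the crucial point is that $\|\rho\|_{H^{3/2}_x}\le\|\rho\|_{H^s_x}$ (since $s>\tfrac74>\tfrac32$), which \emph{is} controlled pointwise in time by Lemma \ref{ee}. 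Replace your Cauchy--Schwarz step with this $L^4$--interpolation argument and the rest of your outline goes through.
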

\begin{proof}
	By using \eqref{E1}, \eqref{E112}, \eqref{E113}, and Gronwall's inequality,  we have
	\begin{align}\label{ee1}
		& \| \rho \|^2_{H_x^s}+\|\bv\|^2_{H_x^s}+\|\varpi\|^2_{H_x^1}+\|\Delta \varpi\|^2_{L_x^2}- 2\int_{\mathbb{R}^2} \partial \rho \partial\varpi \Delta \varpi dx
		\\
		\notag  &\quad \lesssim  (\|\varpi_0\|^2_{H^2}+\|\rho_0\|^2_{H^s}+\|\bv_0\|^2_{H^s})  \exp \big ( {\int^t_0} (1+ \|d\bv, d\rho\|_{L^\infty_x})^2 d\tau \big)
		\\
		\notag  &\qquad +\|\Delta \varpi_0\|^2_{L^2}+ \left| \int_{\mathbb{R}^2} \partial \rho_0 \partial\varpi_0 \Delta \varpi_0 dx \right|
		\\
		\notag  &\quad \lesssim  (\|\varpi_0\|^2_{H^2}+\|\rho_0\|^2_{H^s}+\|\bv_0\|^2_{H^s})  \exp \big ( {\int^t_0} (1+ \|d\bv, d\rho\|_{L^\infty_x})^2 d\tau \big)
		\\
		\notag  &\qquad +\|\varpi_0\|^2_{H^2}+ \| \rho_0 \|_{H^s} \| \varpi_0 \|^2_{H^2}.
	\end{align}
	Note
	\begin{equation}\label{ee2a}
		\begin{split}
			\|\Delta \varpi\|^2_{L_x^2}- 2\int_{\mathbb{R}^2} \partial \rho \partial \varpi \Delta \varpi dx &\geq  \|\Delta \varpi\|^2_{L_x^2}-C \|\partial \rho\|_{L_x^4} \|\partial \varpi\|_{L_x^4}  \|\Delta \varpi\|_{L_x^2}.
		\end{split}
	\end{equation}
	By Sobolev inequality, it implies that
	\begin{equation}\label{ee30}
		\begin{split}
			\|\partial {\rho}\|_{L_x^4} \|\partial \varpi\|_{L_x^4}  \|\Delta \varpi\|_{L_x^2}
			\leq  \|\partial {\rho}\|_{H_x^{\frac{1}{2}}} \|\partial \varpi\|_{H_x^{\frac{1}{2}}} \|\Delta \varpi\|_{L_x^2}.
		\end{split}
	\end{equation}
	By interpolation formula, we have
	\begin{equation}\label{ee31}
		\|\partial f\|_{H_x^{\frac{1}{2}}}
		\lesssim  \|\partial f\|^{\frac{1}{2}}_{L_x^2} \|\partial^2 f\|^{\frac{1}{2}}_{L_x^2}.
	\end{equation}
	Using \eqref{ee31} and Young's inequality, we can update \eqref{ee3} by
	\begin{equation}\label{ee3}
		\begin{split}
			\|\partial \rho\|_{L_x^4} \|\partial \varpi\|_{L_x^4}  \|\Delta \varpi\|_{L_x^2}
			\leq  \|\rho\|^4_{H_x^\frac{3}{2}}  \|\partial \varpi\|^{2}_{L_x^2} + \frac{1}{100}\|\varpi\|^2_{H_x^2}.
		\end{split}
	\end{equation}
	We also note that
	\begin{equation}\label{ee4}
		\|\varpi\|^2_{H_x^1}+ \| \Delta \varpi \|^2_{L_x^2} \geq \frac{1}{2} \|\varpi\|^2_{H_x^2}.
	\end{equation}
	Gathering \eqref{ee1}, \eqref{ee2a}, \eqref{ee3}, and \eqref{ee4}, we get
	\begin{equation}\label{ee5}
		\begin{split}
			\| \rho \|_{H_x^{s}}+\|\bv\|_{H_x^{s}}+\|\varpi\|_{H_x^2}
			\lesssim  E_0(1+E_0^{\frac{1}{2}}) \exp \big ( {\int^t_0} (1+\|d\bv, d\rho\|_{L^\infty_x})^2d\tau \big).
		\end{split}
	\end{equation}
	By using \eqref{CEE} and $\mathbf{T}\varpi=0$, we can carry out
	\begin{equation}\label{ee6}
		\begin{split}
			\|\partial_t \rho\|_{H_x^{s-1}}+\|\partial_t \bv\|_{H_x^{s-1}}+\|\partial_t \varpi\|_{H_x^1}
			\lesssim   \| \rho \|_{H_x^{s}}+\|\bv\|_{H_x^{s}}+\|\varpi\|_{H_x^2}.
		\end{split}
	\end{equation}
	By \eqref{ee5} and \eqref{ee6}, we can obtain \eqref{E7}.
\end{proof}
Moreover, we also need the following energy estimates of some lower-order terms.
\begin{Lemma}\label{yux}
	Let $(\bv,\rho)$ be a solution of \eqref{CEE} and $\varpi$ be defined in \eqref{sv}. Let $\bv_{-}$ be denoted in \eqref{etad}. Let $\mathcal{D}$, $\bE$, and $\bQ$ be stated in \eqref{Di}. Let $s \in (\frac74,2$. Then the following estimates
	\begin{equation}\label{YYE}
		\| \mathcal{D}, \bQ, \bE\|_{ H_x^{s-1}} \lesssim \| d\rho, d\bv \|_{L_x^\infty} \| \bv, \rho \|_{H_x^{s}},
	\end{equation}
	and
	\begin{equation}\label{eta}
		\| \mathbf{T} \bv_{-} \|_{H_x^s} \lesssim \| \varpi\|_{H_x^{1+}} \|\bv, \rho \|_{H_x^{s}},
	\end{equation}
	hold. Moreover, the function $\bv_{-}$ satisfies
	\begin{equation}\label{eee}
		\| \bv_{-} \|_{H_x^{2+}}  \lesssim \| {\rho} \|_{H^{s}_x} \| \varpi \|_{H_x^{1+}}.
	\end{equation}
\end{Lemma}
\begin{proof}
	We recall some formulations in \eqref{Di}, then
	\begin{equation*}
		\mathcal{D}=-3c_s^{-1}c'_sg^{\alpha \beta} \partial_\alpha \rho \partial_\beta \rho+2\sum_{1 \leq a < b \leq 2} \big\{ \partial_a v^a \partial_b v^b-\partial_a v^b \partial_b v^a \big\},
	\end{equation*}
	By using Lemma \ref{jiaohuan}, we can get
	\begin{equation}\label{Ds}
		\| \mathcal{D} \|_{H_x^{s-1}} \lesssim  \| d\rho \|_{L_x^\infty} \| d\rho \|_{H_x^{s-1}}+ \| \partial \bv \|_{L_x^\infty} \| \partial \bv \|_{H_x^{s-1}}.
	\end{equation}
	Note
	\begin{equation*}
		\begin{split}
			Q^i&=2[ia]c^2_s \varpi \partial^i \rho=2[ia]c^2_s \bar{\rho}\mathrm{e}^{\rho}\mathrm{curl}\bv \partial^a \rho,
			\\
			E^i&=-\left( 1+c_s^{-1}c'_s\right)g^{\alpha \beta} \partial_\alpha \rho \partial_\beta v^i.
		\end{split}
	\end{equation*}
	By Lemma \ref{jiaohuan}, we can deduce that
	\begin{equation}\label{Qi}
		\begin{split}
			\|\bQ,\bE \|_{H_x^{s-1}}&  \lesssim (\|d\rho\|_{L_x^\infty} + \|d\bv \|_{L_x^\infty}) ( \| d \rho \|_{H_x^{s-1}}+ \| d \bv \|_{H_x^{s-1}}).
		\end{split}
	\end{equation}
	By using \eqref{sq}, we then have
	\begin{equation}\label{fa}
		\| d \bv\|_{H_x^{s-1}}+\| d {\rho}\|_{H_x^{s-1}} \lesssim \| \partial \bv\|_{H_x^{s-1}}+\| \partial {\rho}\|_{H_x^{s-1}}\lesssim \|  \bv\|_{H_x^{s}}+\| {\rho}\|_{H_x^{s}}.
	\end{equation}
	Combining \eqref{fa}, \eqref{Qi}, and \eqref{Ds}, we get \eqref{YYE}. It remains for us to prove \eqref{eta} and \eqref{eee}.

	By \eqref{etad}, Lemma \ref{jiaohuan}, Lemma \ref{ps}, and elliptic estimates, we can derive that
	\begin{equation}\label{ETAs}
		\begin{split}
			\| \bv_{-} \|_{H_x^{2+}} =  \| \mathrm{e}^{\rho}\partial \varpi \|_{H_x^{1+}} \leq  \|{\rho}\|_{H_x^{1+}} \| \varpi \|_{H_x^{1+}}\leq  \|{\rho}\|_{H_x^{s}} \| \varpi \|_{H_x^{1+}}.
		\end{split}
	\end{equation}
	Let us give a bound for $\mathbf{T}\bv_{-}$. By using \eqref{etad}, \eqref{fc} and \eqref{DW}, we derive that
	\begin{align}\label{bs}
		-\Delta(\mathbf{T}v_{-}^i)  &= [\mathbf{T},\Delta]v_{-}^i-[ia]\mathbf{T}( \mathrm{e}^{\rho}\partial^a \varpi)
		\\
		\notag   &=- \Delta v^m \partial_m v_{-}^i-2 \partial_j v^m \partial^j(\partial_m v_{-}^i)\\
		\notag   &\quad +[ia]\mathrm{e}^{\rho}\partial_jv^j \partial^a\varpi- [ia]\mathrm{e}^{\rho} \partial^a v^j \partial_j \varpi.
	\end{align}
	By elliptic estimates, we get
	\begin{equation*}
		\begin{split}
			\| \mathbf{T} \bv_{-} \|_{H_x^s} & \lesssim \|-\Delta(\mathbf{T} \bv_{-}) \|_{H^{s-2}_x}.
		\end{split}
	\end{equation*}
	We deduce that the right terms of \eqref{bs} can be formulated by
	\begin{equation}\label{yds}
		\partial^2 \bv \partial \bv_{-}, \partial ( \partial \bv \partial \bv_{-}), \partial ( \partial \bv \varpi), \partial^2 \bv \varpi, \partial \rho \partial \bv \varpi.
	\end{equation}
	Because we can write
	\begin{equation*}
		\partial \bv \partial(\partial \bv_{-})= \partial ( \partial \bv \partial \bv_{-})-\partial^2\bv \partial \bv_{-},
	\end{equation*}
	and
	\begin{equation*}
		\mathrm{e}^{\rho}\partial \bv \partial \varpi =   \partial (\mathrm{e}^{\rho} \partial \bv \varpi )- \mathrm{e}^{\rho} \partial^2 \bv \varpi -\mathrm{e}^{\rho} \partial \rho \partial \bv \varpi
	\end{equation*}
	Let us estimate all the terms in \eqref{yds} as follows. Using Lemma \ref{ps}, it yields
	\begin{align}\label{y1}
		\| \partial^2 \bv \partial \bv_{-}\|_{H^{s-2}_x}+ \| \partial^2 \bv \varpi\|_{H^{s-2}_x} & \lesssim \| \partial^2 \bv \|_{H_x^{s-2}} ( \| \partial \bv_{-} \|_{H_x^{1+}}+ \| \varpi \|_{H_x^{1+}})
		\\
		\notag  & \lesssim \| \bv \|_{H_x^{s}} ( \| \partial \bv_{-} \|_{H_x^{1+}}+ \| \varpi \|_{H_x^{1+}}),
	\end{align}
	and
	\begin{align}\label{y2}
		\|\partial \rho \partial \bv \varpi\|_{H^{s-2}_x} & = \| \partial \rho \partial \bv\|_{H^{s-2}_x}\| \varpi \|_{H_x^{1+}}
		\\
		\notag   & \lesssim \| \partial \rho \|_{L^4_x}\| \partial \bv\|_{L^4_x}\| \varpi \|_{H_x^{1+}}
		\\
		\notag   & \lesssim \| \rho, \bv\|_{H^s_x}\| \varpi \|_{H_x^{1+}},
	\end{align}
	and
	\begin{align}\label{y3}
		\|\partial ( \partial \bv \partial \bv_{-})\|_{H^{s-2}_x}+ \| \partial ( \partial \bv \varpi)\|_{H^{s-2}_x} & = \| \partial \bv \partial \bv_{-}\|_{H^{s-1}_x}+ \|  \partial \bv \varpi\|_{H^{s-1}_x}
		\\
		\notag  & \lesssim \| \bv \|_{H_x^{s}} ( \| \partial \bv_{-} \|_{H_x^{1+}}+ \| \varpi \|_{H_x^{1+}}).
	\end{align}
	Here we use the fact that $s>\frac{7}{4}$. Combining \eqref{ETAs} to \eqref{y3}, we can prove \eqref{eta} and \eqref{eee}.
\end{proof}
We are now ready to give the stability theorem.
\begin{theorem}\label{stability}{(Stability theorem)}
	Assume $\frac{7}{4}<s \leq 2$. Let $(\bv,\rho, \varpi)$ and $(\bh, \psi, V)$ be\footnote{Here $\varpi=\bar{\rho} \mathrm{e}^{-\rho}\mathrm{curl}\bv$ and $V=\bar{\rho} \mathrm{e}^{-\psi}\mathrm{curl}\bh$.} two solutions of \eqref{CEE} on $[0,t]$ ($t>0$) with initial data $(\bv_0, \rho_0, \varpi_0 ) \in H^s \times H^s \times H^2$ and $(\bh_0, \psi_0, V_0)\in H^s \times H^s \times H^2$ respectively. The solutions also satisfy the conditions Point (1), (2) in Theorem \ref{dingli2}. Therefore, on $[0, t]$, we have
	\begin{multline}\label{s}
		\|(\bv-\bh, \rho-\psi)(t,\cdot)\|_{H_x^{s-1}}+\|(\varpi-V)(t,\cdot)\|_{H_x^{1}} \\
		\lesssim \|(\bv_0-\bh_0, \rho_0-\psi_0)\|_{H^{s}}+\|\varpi_0-V_0\|_{H^{2}}.
	\end{multline}
\end{theorem}
\begin{proof}
	Let $\bU=(\bv,p(\rho))^\mathrm{T}$ and $\bB=(\bh,p(\psi))^\mathrm{T}$. We thus have
	\begin{equation*}
		\begin{split}
			&A_0(\bU) \partial_t \bU+ \sum^2_{i=1}A_i(\bU)\partial_{x_i}\bU=0,
			\\
			&A_0(\bB) \partial_t \bB+ \sum^2_{i=1} A_i(\bB)\partial_{x_i}\bB=0.
		\end{split}
	\end{equation*}
	As a result, $\bU-\bB$ satisfies
	\begin{equation*}
		\begin{split}
			&A_0(\bU) \partial_t (\bU-\bB)+ \sum^2_{i=1}A_i(\bU)\partial_{x_i}(\bU-\bB)=\bF,
		\end{split}
	\end{equation*}
	where
	$$\bF=-\sum^2_{i=0}(A_i(\bU)-A_i(\bB))\partial_{x_i}\bB.$$
	By using the commutator estimates in Lemma \ref{jiaohuan}, we could show
	\begin{multline*}
		\frac{d}{dt}\|\bU-\bB\|_{H_x^{s-1}} \\
		\leq C_{\bU, \bB} \left( \| d\bU, d\bB\|_{L^\infty_x}\|\bU-\bB\|_{H_x^{s-1}}+ \|\bU-\bB\|_{L^\infty_x}\| d \bB\|_{H_x^{s-1}} \right),
	\end{multline*}
	where $C_{\bU, \bB}$ depends on the $L^\infty_x$ norm of $(\bU, \bB)$. By using $(\bv, \rho, \bh, \psi) \in C([0,t],H^s_x)\cap C^1([0,t],H^{s-1}_x)$ and  $d\bv, d\rho, d\bh,d\psi \in {L^4_{[0,t]} L^\infty_x}$, we then have
	\begin{equation*}
		\begin{split}
			\|(\bU-\bB)(t,\cdot)\|_{H_x^{s-1}} &\lesssim \|(\bU-\bB)(0,\cdot)\|_{H^{s-1}}
			\\
			&=\|(\bv_0-\bh_0, \rho_0-\psi_0) \|_{H^{s}}.
		\end{split}
	\end{equation*}
	By Lemma \ref{jiaohuan0}, we further obtain
	\begin{equation}\label{s1}
		\begin{split}
			\|(\bv-\bh, \rho-\psi)(t,\cdot)\|_{H_x^{s-1}} &\lesssim \|(\bv_0-\bh_0, \rho_0-\psi_0)\|_{H^{s}}.
		\end{split}
	\end{equation}
	On the other hand, $\varpi$ and $V$ satisfy
	\begin{equation*}
		\partial_t \varpi + \bv \cdot \nabla \varpi=0,
	\end{equation*}
	and
	\begin{equation*}
		\partial_t V + \bh \cdot \nabla V=0.
	\end{equation*}
	So we get
	\begin{equation}\label{s2}
		\partial_t (\varpi-V) + \bv \cdot \nabla (\varpi-V) =- (\bv-\bh)\nabla V.
	\end{equation}
	By using standard energy estimates for \eqref{s2}, we get
	\begin{align}\label{s3}
		\|(\varpi-V)(t,\cdot)\|_{H_x^{1}} &\leq  (\|(\varpi-V)(0,\cdot)\|_{H_x^{1}}+ \| \bv-\bh \|_{L^1_t H_x^{\frac{3}{2}}} \|V\|_{L^\infty_t H^2_x})\\
		\notag 			  &\quad\times \exp\{\int^t_0 \|\partial \bv\|_{L^\infty_x}d\tau \}
		\\
		\notag   &\lesssim \left( \|(\varpi-V)(0,\cdot)\|_{H_x^{1}}+ \| (\bv-\bh)(0,\cdot) \|_{H^{s}} \right),
	\end{align}
	Combining \eqref{s1} and \eqref{s3}, we complete the proof of \eqref{s}.
\end{proof}
\begin{corollary}{(Uniqueness of the solution)}
	Assume $\frac{7}{4}<s\leq2$. Consider the Cauchy problem \eqref{CEE} with the initial data $(\bv_0, \rho_0, \varpi_0) \in H^{s} \times H^s \times H^2$, where $\varpi_0=\bar{\rho}\mathrm{e}^{\rho_0} \mathrm{curl}\bv_0$. Suppose $(\bv,\rho)$ and $(\bh, \psi)$ be two solutions of \eqref{CEE} such that  $(\bv,\rho, \bh, \psi) \in C([0,T],H_x^s) \cap C^1([0,T],H_x^{s-1})$, $\varpi \in C([0,T],H_x^2) \cap C^1([0,T],H_x^{1})$ and $d\bv, d\rho, d\bh,d\psi \in {L^4_{[0,T]} L^\infty_x}$. Then
	\begin{equation*}
		\bv=\bh, \quad \rho=\psi.
	\end{equation*}
\end{corollary}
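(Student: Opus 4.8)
The plan is to read off the corollary directly from the stability estimate \eqref{s} of Theorem \ref{stability}, since uniqueness is simply the special case in which the two initial data sets coincide. First I would check that the hypotheses of Theorem \ref{stability} are met: we are assuming $\frac74 < s \le 2$, and by hypothesis both $(\bv,\rho)$ and $(\bh,\psi)$ are solutions of \eqref{CEE} lying in $C([0,T],H^s_x)\cap C^1([0,T],H^{s-1}_x)$ with the corresponding specific vorticities in $C([0,T],H^2_x)\cap C^1([0,T],H^1_x)$ and with $d\bv, d\rho, d\bh, d\psi \in L^4_{[0,T]}L^\infty_x$. These are exactly the solution classes in Theorem \ref{stability}; taking $T = \min\{T_1,T_2\}$ for the existence times $T_1, T_2$ supplied there, the estimate \eqref{s} is available on $[0,T]$.

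Next I would observe that the initial data agree in the sense required by \eqref{s}. By assumption $\bv_0 = \bh_0$ and $\rho_0 = \psi_0$; and since the specific vorticity is a fixed first-order differential expression of the velocity and density (see \eqref{sv}), the initial specific vorticities coincide as well — one may note that $\mathrm{curl}$ and multiplication by the smooth function $\mathrm{e}^{\pm \rho_0}$ map the relevant Sobolev spaces continuously, so that $\varpi_0 = V_0$ as elements of $H^2$, not merely pointwise a.e., and this common datum lies in $H^2$ by hypothesis. Therefore the right-hand side of \eqref{s}, namely $\|(\bv_0-\bh_0,\rho_0-\psi_0)\|_{H^s}+\|\varpi_0-V_0\|_{H^2}$, vanishes.

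Feeding this into \eqref{s} gives $\|(\bv-\bh,\rho-\psi)(t,\cdot)\|_{H^{s-1}_x} + \|(\varpi-V)(t,\cdot)\|_{H^1_x} = 0$ for every $t \in [0,T]$. Hence $\bv(t,\cdot) = \bh(t,\cdot)$ and $\rho(t,\cdot) = \psi(t,\cdot)$ in $H^{s-1}_x$ for all such $t$; since both sides in fact lie in $H^s_x$ and the inclusion $H^s_x \subset H^{s-1}_x$ is injective, this yields $\bv \equiv \bh$ and $\rho \equiv \psi$ on $[0,T]$ (and, as a byproduct, $\varpi \equiv V$), which is the asserted uniqueness.

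I do not anticipate a real obstacle: the entire analytic content has already been absorbed into Theorem \ref{stability}, so the only things to verify are the bookkeeping items above — that the window $\frac74 < s \le 2$ and the $H^2$-regularity of the initial vorticity place us within the hypotheses of that theorem, and that the two initial specific vorticities are genuinely the same object. Should one wish to be more careful, the mildly delicate point is the last one, but it is handled by the continuity of $\mathrm{curl}$ and of the Nemytskii map $\rho_0 \mapsto \mathrm{e}^{\rho_0}$ on the Sobolev scale in play (cf.\ Lemma \ref{jiaohuan0}), which shows the identification $\varpi_0 = V_0$ holds at the level of $H^2$ and not merely almost everywhere.
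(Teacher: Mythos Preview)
Your proposal is correct and matches the paper's approach: the corollary is stated immediately after Theorem \ref{stability} with no separate proof, precisely because uniqueness is the special case of the stability estimate \eqref{s} with coinciding initial data. Your additional remark that $\varpi_0 = V_0$ follows from the definition \eqref{sv} applied to identical $(\bv_0,\rho_0)$ is the only bookkeeping needed, and it is straightforward.
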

\section{Reduction to the case of smooth initial data}
In this part, we will reduce Theorem \ref{dingli2} to the case of smooth initial data by compactness arguments.
\begin{proposition}\label{p3}
	Let $s\in (\frac74,2]$ and \eqref{Sa} hold. Let $\delta$ be stated as \eqref{a1}. For each $R>0$, there exist constants $T$ and $M$ ($T, M$ depends on $s$ and $R$) such that, for each smooth initial data $(\bv_0, \rho_0, \varpi_0)$ satisfies
	\begin{equation}\label{2000}
		\begin{split}
			& \|\bv_0 \|_{H^s} +  \|\rho_0  \|_{H^s} +  \| \varpi_0 \|_{H^{2}} \leq R,
		\end{split}
	\end{equation}
	where $\varpi_0= \bar{\rho}^{-1}\mathrm{e}^{-\rho_0} \mathrm{curl} \bv_0$. Then there exists a smooth solution $(\bv, \rho, \varpi)$ to
	\begin{equation}\label{fci}
		\begin{cases}
			\square_g v^i =-[ia]e^{\rho}c^2_s\partial^a \varpi+Q^i + E^i,
			\\
			\square_g \rho=\mathcal{D},
			\\
			\mathbf{T}\varpi =0.
			\\
			(\bv,\rho,\varpi)|_{t=0}=(\bv_0,\rho_0,\varpi_0),
			\\
			(\partial_t \bv, \partial_t \rho)|_{t=0}=(-\bv_0\cdot \nabla \bv_0+c^2_s\nabla \rho_0,-\bv_0 \cdot \nabla \rho_0-\mathrm{div}\bv_0),
		\end{cases}
	\end{equation}
	on $[-T,T] \times \mathbb{R}^2$, which satisfies
	\begin{equation}\label{e9}
		\|(\bv,\rho)\|_{H_x^s}+ \|(\partial_t \bv, \partial_t \rho)\|_{H_x^{s-1}} + \| \varpi \|_{H_x^2}+ \|\partial_t \varpi \|_{H_x^1} \leq M.
	\end{equation}
	Here, the quantities $Q^i, \mathcal{D}$ and $E^i$ are defined in Lemma \ref{FC}. Furthermore, the solution satisfies

	$\mathrm{(1)}$  the dispersive estimate for $\bv$ and $\rho$
	\begin{equation}\label{st0}
		\|d \bv, d \rho \|_{L^4_t C^\delta_x} \leq M,
	\end{equation}

	$\mathrm{(2)}$  for $1\leq r \leq s+1$, the linear equation
	\begin{equation}\label{linear}
		\begin{cases}
			\square_g f=\mathbf{T}\Theta+B,
			\\
			(f, \partial_tf)|_{t=0}=(f_0,f_1)
		\end{cases}
	\end{equation}
	is well-posed in $H^r \times H^{r-1}$, and the following estimates
	\begin{multline}\label{st1}
		\|\left<\partial \right>^{k} f \|_{L^4_t L^\infty_x} \lesssim   \|f_0\|_{H^r}+ \|f_1\|_{H^{r-1}} \\
		+\| \Theta\|_{L^\infty_tH^{r-1} \cap L^1_tH_x^r}+\|B\|_{L^1_tH_x^{r-1}}, \quad k< r-\frac{3}{4},
	\end{multline}
	and
	\begin{align}\label{e10}
		\|f \|_{L^\infty_t H^r_x}+\| \partial_t f \|_{L^\infty_t H^{r-1}_x} &\lesssim   \|f_0\|_{H^r}+ \|f_1\|_{H^{r-1}} \\
		\notag &\quad	+\| \Theta\|_{L^\infty_tH^{r-1} \cap L^1_tH_x^r}+\|B\|_{L^1_tH_x^{r-1}},
	\end{align}
	hold.
\end{proposition}
In the following, we will use Proposition \ref{p3} to prove Theorem \ref{dingli2}.
\begin{proof}[proof of Theorem \ref{dingli2} by Proposition \ref{p3}]
	Consider arbitrary initial data $(\bv_0, \rho_0, \varpi_0) \in H^s \times H^s \times H^2$ satisfying
	\begin{equation*}
		\|\bv_0 \|_{H^s} +  \|\rho_0  \|_{H^s} +  \| \varpi_0 \|_{H^{2}} \leq R.
	\end{equation*}
	Let $\{(\bv_{0k}, {\rho}_{0k}, \varpi_{0k})\}_{k \in \mathbb{N}^+}$ be a sequence of smooth data which satisfies
	\begin{equation*}
		\lim_{k\rightarrow \infty}\bv_{0k}= \bv_0, \quad \lim_{k\rightarrow \infty}\rho_{0k}= \rho_0, \quad \text{in} \ H^s,
	\end{equation*}
	\begin{equation*}
		\varpi_{0k}=\bar{\rho}\mathrm{e}^{-\rho_{0k}}\mathrm{curl}\bv_{0k} , \quad  \lim_{k\rightarrow \infty}\varpi_{0k}= \varpi_0, \quad \text{in} \ H^2.
	\end{equation*}
	By Proposition \ref{p3}, for each $k$, there exist the corresponding solution $(\bv_k, \rho_k, \varpi_k)$ to \eqref{fci}. Also
	\begin{equation*}
		(\bv_k, \rho_k, \varpi_k)|_{t=0}=(\bv_{0k}, {\rho}_{0k}, \varpi_{0k})^{\mathrm{T}}.
	\end{equation*}
	Notice that the solutions of \eqref{fci} also satisfy the symmetric hyperbolic system \eqref{sh}. Set $$\bU_k=(v_{1k}, v_{2k}, p(\rho_k)), \quad k \in \mathbb{N}^+.$$ For $j, l \in \mathbb{N}^+$, we could derive
	\begin{equation*}
		\begin{split}
			&A_0(\bU_j) \partial_t \bU_j+ A_1(\bU_k)\partial_{x_1}\bU_j+A_2(\bU_j)\partial_{x_2}\bU_j=0,
			\\
			& A_0(\bU_l) \partial_t \bU_l+ A_1(\bU_l)\partial_{x_1}\bU_l+A_2(\bU_l)\partial_{x_2}\bU_l=0.
		\end{split}
	\end{equation*}
	The standard energy estimates imply that
	\begin{multline*}
		\frac{d}{dt} \|\bU_j-\bU_l \|_{H_x^{s-1}} \\
		\leq C_{\bU_j, \bU_l} \left(  \| d\bU_j, d\bU_l \|_{L^\infty_x} \|\bU_j-\bU_l \|_{H_x^{s-1}}+  \|\bU_j-\bU_l \|_{L^\infty_x} \| d \bU_l \|_{H_x^{s-1}} \right),
	\end{multline*}
	where $C_{\bU_j, \bU_l}$ depends on the $L^\infty_x$ norm of $\bU_j, \bU_l$. By using Strichartz estimates \eqref{st0} for $d\bv_k, d\rho_k, k \in \mathbb{N}^+$ and the energy estimates \eqref{e9} for $\bv_k, \rho_k, k \in \mathbb{N}^+$, we can derive that
	\begin{align}\label{eij}
		\|(\bU_j-\bU_l)(t,\cdot) \|_{H_x^{s-1}} &\lesssim  \|(\bU_j-\bU_l)(0,\cdot) \|_{H^{s}}
		\\
		\notag  &\lesssim  \|(\bv_{0j}-\bv_{0l}, \rho_{0j}-\rho_{0l})  \|_{H^{s}}.
	\end{align}
	Thus, the sequence $\{(\bv_k,\rho_k)\}_{k=1}^{\infty}$ is a Cauchy sequence in $C([-T,T];H^{s-1})$. Denote $(\bv, \rho)$ to be the limit. We therefore have
	\begin{equation}\label{c0}
		\lim_{k\rightarrow \infty}(\bv_k,\rho_k)=(\bv, \rho) \in C([-T,T];H^{s-1}).
	\end{equation}
	Consider the transport equation
	\begin{equation*}
		\partial_t \varpi_k + \bv_k \cdot \nabla \varpi_k=0, \quad k \in \mathbb{N}^+.
	\end{equation*}
	It's direct to get
	\begin{equation*}
		\begin{split}
			\partial_t (\varpi_j-\varpi_l)+ \bv_j \cdot \nabla (\varpi_j-\varpi_l) =(\bv_j-\bv_l) \cdot \nabla \varpi_l, \quad j, l\in \mathbb{N}^+.
		\end{split}
	\end{equation*}
	By Sobolev equality and energy estimates, we have
	\begin{align}\label{e11}
		& \|\varpi_k-\varpi_l \|_{H^{s-1}} \\
		\notag  &\quad    \lesssim ( \| \varpi_{0k}-\varpi_{0l} \|_{H^{s-1}}+ \| \bv_j-\bv_l \|_{L^\infty_tH_x^{s-1}} \| \nabla \varpi_l \|_{L^\infty_t H_x^{1}})\\
		\notag   &\qquad \times \exp\{ \int^t_{0} \| \partial \bv_j \|_{L^\infty_x}d\tau \}.
	\end{align}
	For $\{\varpi_{0k}\}_{ k \in \mathbb{N}^+}$ and $\{\bv_k\}_{ k \in \mathbb{N}^+}$ being two Cauchy sequence in $ H^{s}$ and $L^\infty_t H_x^{s-1}$ respectively, using \eqref{e11}, then $\{\varpi_k\}_{ k \in \mathbb{N}^+}$ is a Cauchy sequence in $C([-T,T];H^{s-1})$. We denote the limit
	\begin{equation}\label{c1}
		\lim_{k\rightarrow \infty} \varpi_k = \varpi \in C([-T,T];H^{s-1}).
	\end{equation}

	Since $(\bv_k,\rho_k, \varpi^k)$ is uniformly bounded in $L^\infty_t H_x^{s} \times L^\infty_t H_x^{s} \times L^\infty_t H_x^{2}$. Noting the convergence \eqref{c0} and \eqref{c1}, we can deduce that
	\begin{equation}\label{sc0}
		(\bv, \rho, \varpi_k) \in L^\infty_t H_x^{s} \times L^\infty_t H_x^{s} \times L^\infty_t H_x^{2}.
	\end{equation}
	Also, for $(\bv, \rho, \varpi)$ satisfying \eqref{fci} and \eqref{fci} being equivalent with \eqref{CEE}, we get
	\begin{equation}\label{sc1}
		(\partial_t \bv, \partial_t \rho, \partial_t \varpi) \in L^\infty_t H_x^{s-1} \times L^\infty_t H_x^{s-1} \times L^\infty_t H_x^{1}.
	\end{equation}
	On the other hand, using Proposition \ref{p3}, $(d\bv_k, d\rho_k)$ is uniformly bounded in $L^4([-T,T];C_x^\delta)$. As a result, we have
	\begin{equation}\label{sts}
		\lim_{k \rightarrow \infty} (d\bv_k, d\rho_k)= (d\bv, d\rho), \quad \text{in} \ L^4([-T,T];L_x^\infty).
	\end{equation}

	It remains for us to prove \eqref{E0} and \eqref{SE1} in Theorem \ref{dingli2}. For $1 \leq r \leq s+1$, by Proposition \ref{p3}, we have that there exists solutions $f_k$ satisfying
	\begin{equation}\label{fk}
		\begin{cases}
			\square_{g_k} f_k=\mathbf{T}\Theta+B
			\\
			(f_k, \partial_tf_k)|_{t=0}=(f_0,f_1).
		\end{cases}
	\end{equation}
	Here the metric $g_k$ has the same formula as in Definition \ref{metricd}, and whose velocity, density should be replaced by $\bv_k$ and $\rho_k$. Using \eqref{st1} and \eqref{e10}, we have
	\begin{align}\label{sre}
		\|\left<\partial \right>^{a} f_k \|_{L^4_t L^\infty_x}  &\lesssim   \|f_0\|_{H^r}+ \|f_1\|_{H^{r-1}}\\
		\notag 	 &\quad +\| \Theta \|_{L^\infty_tH^{r-1} \cap L^1_tH_x^r}+\|B\|_{L^1_tH_x^{r-1}}, \quad a< r-\frac{3}{4},
	\end{align}
	and
	\begin{align}\label{e9e}
		\|f_k \|_{L^\infty_t H^r_x}+\| \partial_t f_k \|_{L^\infty_t H^{r-1}_x} &\lesssim   \|f_0\|_{H^r}+ \|f_1\|_{H^{r-1}}\\
		\notag 				&\quad +\| \Theta \|_{L^\infty_tH^{r-1} \cap L^1_tH_x^r}+\|B\|_{L^1_tH_x^{r-1}}.
	\end{align}
	From \eqref{e9e}, we obtain that there exists a subsequence such that there is a limit $f$ satisfying
	\begin{equation}\label{e8e}
		\|f \|_{L^\infty_t H^r_x}+\| \partial_t f\|_{L^\infty_t H^{r-1}_x} \lesssim   \|f_0\|_{H^r}+ \|f_1\|_{H^{r-1}}+\| \Theta\|_{L^\infty_tH^{r-1} \cap L^1_tH_x^r}+\|B\|_{L^1_tH_x^{r-1}}.
	\end{equation}
	Utilizing \eqref{sre}, we have
	\begin{align}\label{s7e}
		\|\left<\partial \right>^{a} f \|_{L^4_t L^\infty_x} &\lesssim   \|f_0\|_{H^r}+ \|f_1\|_{H^{r-1}}\\
		\notag      &\quad +\| \Theta \|_{L^\infty_tH^{r-1} \cap L^1_tH_x^r}+\|B\|_{L^1_tH_x^{r-1}}, \quad a< r-\frac{3}{4}.
	\end{align}
	Also, taking limit to \eqref{fk}, then the limit $f$ satisfies
	\begin{equation}\label{ff}
		\begin{cases}
			\square_{g} f=\mathbf{T}\Theta+B,
			\\
			(f, \partial_tf)|_{t=0}=(f_0,f_1).
		\end{cases}
	\end{equation}
	Combining \eqref{sc0}--\eqref{sts}, and \eqref{e8e}--\eqref{ff}, we have finished the proof of Theorem \ref{dingli2}.
\end{proof}
\section{Reduction to existence for small, smooth, compactly supported data}
In this section, our goal is to give a reduction of Proposition \ref{p3} to the existence for small, smooth, compactly supported data by using physical localization arguments.
\begin{proposition}\label{p4}
	Let $\frac{7}{4}<s \leq 2$. Assume \eqref{a1}, \eqref{Sa}, and \eqref{a0} hold. Let the initial data $(\bv_0, {\rho}_0, \varpi_0)$ be smooth, supported in $B(0,c+2)$ such that
	\begin{equation}\label{ep1}
		\begin{split}
			& \|\bv_0 \|_{H^s} +  \|\rho_0  \|_{H^s} +  \|\varpi_0 \|_{H^{2}} \leq \epsilon_3.
		\end{split}
	\end{equation}
	where $\varpi_0=\bar{\rho}\mathrm{e}^{-\rho}\mathrm{curl}\bv_0$. Then the Cauchy problem \eqref{fci} admits a unique, smooth solution $(\bv,\rho,\varpi)$ on $[-1,1] \times \mathbb{R}^2 $, which has the following properties:
	
	$\mathrm{(1)}$  energy estimate
	\begin{equation}\label{ep2}
		\begin{split}
			& \|(\bv, \rho ) \|_{L^\infty_t H_x^{s}} + \|(\partial_t \bv, \partial_t \rho) \|_{L^\infty_t H_x^{s-1}} +  \| \varpi \|_{L^\infty_t H_x^{2}}+  \|\partial_t \varpi \|_{L^\infty_t H_x^{1}}  \leq \epsilon_2.
		\end{split}
	\end{equation}

	$\mathrm{(2)}$  dispersive estimate for $\bv$ and $\rho$
	\begin{equation}\label{ep3}
		\|d \bv, d \rho \|_{L^4_t C^\delta_x} \leq \epsilon_2,
	\end{equation}

	$\mathrm{(3)}$  dispersive estimate for the linear equation

	for $1\leq r \leq s+1$, the linear equation
	\begin{equation}\label{linea1}
		\begin{cases}
			&\square_g f=\mathbf{T}\Theta+B,
			\\
			& (f, \partial_tf)|_{t=0}=(f_0,f_1)
		\end{cases}
	\end{equation}
	is well-posed in $H^r \times H^{r-1}$, and the following estimates
	\begin{align}\label{st2}
		\|\left<\partial \right>^{k} f \|_{L^4_t L^\infty_x}  &\lesssim   \|f_0\|_{H^r}+ \|f_1\|_{H^{r-1}}\\
		\notag &\quad 	 +\| \Theta \|_{L^\infty_tH^{r-1} \cap L^1_tH_x^r}  +\|B\|_{L^1_tH_x^{r-1}}, \quad k< r-\frac{3}{4},
	\end{align}
	and
	\begin{align}\label{e20}
		\|f \|_{L^\infty_t H^s_x}+\| \partial_t f \|_{L^\infty_t H^{s-1}_x} &\lesssim   \|f_0\|_{H^r}+ \|f_1\|_{H^{r-1}}\\
		\notag 		    &\quad +\| \Theta \|_{L^\infty_tH^{r-1} \cap L^1_tH_x^r}+\|B\|_{L^1_tH_x^{r-1}}
	\end{align}
	holds.
\end{proposition}
\begin{proof}[proof of Proposition \ref{p3} by Proposition \ref{p4}]
	To achieve the goal, we will firstly reduce Proposition \ref{p3} to small data by scaling and physical localization, and then using the conclusion in Proposition \ref{p4} to prove Proposition~\ref{p3}.

	{\bf Step 1. Scaling}. The initial data in Proposition \ref{p3} satisfies
	\begin{equation}\label{a4}
		\begin{split}
			& \|\bv_0 \|_{H^s}+  \| \rho_0 \|_{H^s} +  \| \varpi_0 \|_{H^{2}} \leq R.
		\end{split}
	\end{equation}
	By scaling
	\begin{equation*}
		\begin{split}
			&\tilde{\bv}(t,x)=\bv(Tt,Tx),\quad \tilde{\rho}(t,x)=\rho(Tt,Tx), \quad \tilde{\varpi}(t,x)=\bar{\rho}^{-1} \mathrm{e}^{-\tilde{\rho}}\text{curl} \tilde{\bv},
		\end{split}
	\end{equation*}
	we get
	\begin{equation*}
		\begin{split}
			& \|\tilde{\bv}_0 \|_{\dot{H}^s}+ \|\tilde{\rho}_0\|_{\dot{H}^s} \leq R T^{s-1},
			\\
			&  \|\tilde{\varpi}_0 \|_{\dot{H}^2} \leq R T.
		\end{split}
	\end{equation*}
	Let $\epsilon_3$ be stated in \eqref{a0}. Choose sufficiently small $T$ such that\footnote{Here, we can see that $T$ depends on $s$ and the initial norm $\|\bv_0 \|_{H^s}$, $\| \rho_0 \|_{H^s}$, and $\| \varpi_0 \|_{H^{2}}$.}
	\begin{equation*}
		R T^{s-1} \ll \epsilon_3.
	\end{equation*}
	We then derive that
	\begin{equation*}
		\begin{split}
			& \|\tilde{\bv}_0 \|_{\dot{H}^s}+  \|\tilde{{\rho}}_0 \|_{\dot{H}^s} + \|\tilde{\varpi}_0 \|_{\dot{H}^2}\leq \epsilon_3.
		\end{split}
	\end{equation*}
	The above homogeneous norm is not enough for us to use Proposition \ref{p4}. We then need to reduce the data in a further step.

	{\bf Step 2. Localization}. Let $c$ be the largest speed of propagation of \eqref{fci}. Set $\chi$ be a smooth function supported in $B(0,c+2)$, and which equals $1$ in $B(0,c+1)$. For any given $y \in \mathbb{R}^2$, we define the localized initial data near $y$:
	\begin{equation*}
		\begin{split}
			&\bv^y_0=\chi(x-y)\left( \bv_0-\bv_0(y)\right),
			\\
			& \rho_0^y=\chi(x-y)\left( \rho_0-\rho_0(y)\right).
		\end{split}
	\end{equation*}
	Then the initial specific vorticity should be given by
	\begin{equation*}
		\varpi^y_0=\bar{\rho}^{-1} \mathrm{e}^{-\rho_0^y+\rho_0(y)}\text{curl}\bv^y_0.
	\end{equation*}
	Since $s\in (\frac{7}{4},2]$, it is not difficult for us to verify
	\begin{equation}\label{a5}
		\| (\bv^y_0, \rho_0^y)\|_{H_x^s}+ \| \varpi_0^y \|_{H_x^2} \lesssim  \|\bv_0, \rho_0\|_{\dot{H}^s}+ \| \varpi_0  \|_{\dot{H}^2} \lesssim \epsilon_3.
	\end{equation}

	{\bf Step 3. Using Proposition \ref{p4}. }
	By Proposition \ref{p4}, there is a smooth solution $(\bv^y, \rho^y, \varpi^y)$ on $[-1,1]\times \mathbb{R}^2$ satisfying the following Cauchy problem
	\begin{equation}\label{p}
		\begin{cases}
			& \square_g v^i =-[ia]e^{\rho}c^2_s\partial^a \varpi+Q^i + E^i,
			\\
			& \square_g \rho=\mathcal{D},
			\\
			& \mathbf{T}\varpi =0.
			\\
			& (\bv,\rho,\varpi)|_{t=0}=(\bv^y_0,\rho^y_0,\varpi^y_0),
			\\
			& (\partial_t \bv, \partial_t {\rho})|_{t=0}=(-\bv_0^y\cdot \nabla \bv^y_0+c^2_s\nabla \rho^y_0,-\bv^y_0 \cdot \nabla \rho^y_0-\mathrm{div}\bv^y_0),
		\end{cases}
	\end{equation}
	where $Q^i, E^i$ and $\mathcal{D}$ are stated as \eqref{Di}. As a result, $\bv^y+\bv_0(y), \rho^y+\rho_0(y), \varpi^y$ also solves \eqref{p}, and its initial data coincides with $(\bv_0, \rho_0, \varpi_0)$ in $B(y,c+1)$. Besides, the Strichartz estimate
	\begin{equation}\label{a6}
		\|d \bv^y, d \rho^y \|_{L^4_{t} L_x^\infty} \leq \epsilon_2,
	\end{equation}
	also holds. Consider the restriction, for $y\in \mathbb{R}^2$,
	\begin{equation*}
		\left( \bv^y+\bv_0(y) \right)|_{\text{K}^y}, \quad \left( { \rho}^y+{\rho}_0(y) \right)|_{\text{K}^y}, \quad  \varpi^y |_{\text{K}^y},
	\end{equation*}
	where
	\begin{equation*}
		\text{K}^y:=\left\{(t,x):ct+|x-y| \leq c+1, |t|<1 \right\}.
	\end{equation*}
	Then this restrictions solve \eqref{p} on $\text{K}^y$. By finite speed of propagation and the uniqueness of solutions of \eqref{fci}, a smooth solution $(\bv, \rho, \varpi)$ satisfying~\eqref{fci} could be set by
	\begin{align*}
		\bv(t,x)  &= \textstyle{\sum}_{y \in 2^{-\frac12}\mathbb{Z}^+}(\bv^y(t,x)+\bv_0(y)), \quad (t,x) \in \text{K}^y,
		\\
		{\rho}(t,x)  &=\textstyle{\sum}_{y \in 2^{-\frac12}\mathbb{Z}^+} ( {\rho}^y(t,x)+{\rho}_0(y)), \quad  (t,x) \in \text{K}^y,
		\\
		\varpi(t,x)  &=\bar{\rho}^{-1}\mathrm{e}^{-\rho} { {\mathrm{curl}\bv} }, \quad \qquad \quad \quad \quad \quad \ \  (t,x) \in \text{K}^y.
	\end{align*}
	For the problem \eqref{fci} is equivalent with \eqref{CEE}, using Theorem \ref{be} and~\eqref{a6}, we have for $t \in [-1,1]$
	\begin{align}\label{a7}
		& \| (\bv, \rho)\|_{H_x^{s}}+\|\varpi\|_{H_x^2}+\|(\partial_t \bv, \partial_t \rho)\|_{H_x^{s-1}}+\|\partial_t \varpi\|_{H_x^1}
		\\
		\notag &\quad =  \textstyle{\sum}_{y \in 2^{-\frac12}\mathbb{Z}^+} \| (\bv^y, \rho^y)\|_{H_x^{s}}+\|\varpi^y\|_{H_x^2}+\|(\partial_t \bv^y, \partial_t \rho^y)\|_{H_x^{s-1}}+\|\partial_t \varpi^y\|_{H_x^1}
		\\
		\notag &\quad  \leq  C\textstyle{\sum}_{y \in 2^{-\frac12}\mathbb{Z}^+} \left( \| (\bv^y_0, \rho_0^y)\|_{H_x^s}+ \| \varpi_0^y \|_{H_x^2} \right) \exp\{ \int^t_0 (1+\|d\bv^y, d \rho^y\|_{L^\infty_x})^2 d\tau \}
		\\
		\notag    &\quad\leq  M.
	\end{align}
	By \eqref{a6}, we can directly get
	\begin{equation}\label{a8}
		\|d \bv, d \rho \|_{L^4_{t} L_x^\infty}\leq C \sup_{ y \in 2^{-\frac12}\mathbb{Z}^+ }\|d \bv^y, d \rho^y \|_{L^4_{[-1,1]} L_x^\infty} \leq M.
	\end{equation}

	It remains for us to prove \eqref{st1} and \eqref{e10}. Let the cartesian grid $2^{-\frac{1}{2}}\mathbb{Z}^2$ be in $\mathbb{R}^2$, and a corresponding smooth partition of unity be
	\begin{equation*}
		\sum_{y \in 2^{-\frac{1}{2}} \mathbb{Z}^2 } \psi(x-y)=1,
	\end{equation*}
	such that the function $\psi$ is supported in the unit ball. Consider the solution $F^y$ for
	\begin{equation}\label{a9}
		\begin{cases}
			\square_{g^y} F^y=0,
			\\
			F^y|_{t=0}=\psi(x-y)f_0, \ \partial_t F^y|_{t=0}=\psi(x-y)f_1,
		\end{cases}
	\end{equation}
	where $g^y$ has the same formulation as in Definition \ref{metricd} with the velocity $\bv^y$ and $\rho^y$. Thus,
	\begin{equation}\label{gy}
		g^y=g, \quad (t,x) \in \text{K}^y.
	\end{equation}
	By finite speed of propagation, for $(t,x)\in \text{K}^y$, we can conclude that
	\begin{equation*}
		F^y=F, \quad (t,x)\in \text{K}^y.
	\end{equation*}
	Write $F$ as
	\begin{equation*}
		F(t,x)=\textstyle{ \sum_{y \in 2^{-\frac{1}{2}}\mathbb{Z}^2} } \psi(x-y)F^y(x,t),
	\end{equation*}
	Using \eqref{st2} and \eqref{e20}, for $k< r- \frac{3}{4}$, we could get
	\begin{align}\label{a10}
		\| \left< \partial \right>^{k} F   \|^4_{L^4_t L^\infty_x}
		&\leq   C{ \sum_{y \in 2^{-\frac{1}{2}}\mathbb{Z}^2} }   \|\psi(x-y)  \left< \partial \right>^k F^y(x,t)  \|^4_{L^4_t L^\infty_x}
		\\
		\notag  &\leq   C{ \sum_{y \in 2^{-\frac{1}{2}}\mathbb{Z}^2} }   \| \psi(x-y) (f_0,f_1) \|^4_{H^r \times H^{r-1}}.
		\\
		\notag  &\lesssim  \left(  \|f_0\|_{H^r}+ \|f_1\|_{H^{r-1}} \right)^4,
	\end{align}
	and
	\begin{align}\label{a10q}
		&	\| F   \|_{L^\infty_t H^s_x} + \|  \partial_t F   \|_{L^\infty_t H^{s-1}_x}\\
		\notag 	& \leq   C{ \sum_{y \in 2^{-\frac{1}{2}}\mathbb{Z}^2} }   ( \| \psi(x-y) F^y(t,x)  \|_{L^\infty_t H^{s-1}_x}  +  \|\psi(x-y)  \partial_t F^y   \|_{L^\infty_t H^{s-1}_x}  )
		\\
		\notag 	&\lesssim \|f_0\|_{H^r}+ \|f_1\|_{H^{r-1}}.
	\end{align}
	For the linear wave equation \eqref{linear} with the inhomogeneous term $\mathbf{T}\Theta$ and $B$, by Duhamel's principle, we can directly obtain \eqref{st1}--\eqref{e10} from \eqref{a10}--\eqref{a10q}. So we have finished the proof of Proposition \ref{p4}.
\end{proof}
\section{A bootstrap argument}
Let $\mathbf{m}^{\alpha \beta}$ be a standard Minkowski metric satisfying
\begin{equation*}
	\mathbf{m}^{00}=-1, \quad \mathbf{m}^{ij}=\delta^{ij}, \quad i, j=1,2.
\end{equation*}
Taking $\bv=0$ and $\rho=0$ in $g$, the inverse matrix of the metric $g$ is
\begin{equation*}
	g^{-1}(0)=
	\left(
	\begin{array}{ccc}
		-1 & 0 & 0\\
		0 & c^2_s(0) & 0\\
		0 & 0 & c^2_s(0)
	\end{array}
	\right ).
\end{equation*}
By a linear change of coordinates which preserves $dt$, we may assume that $g^{\alpha \beta}(0)=\mathbf{m}^{\alpha \beta}$. Let $\chi$ be a smooth cut-off function supported in the region $B(0,3+2c) \times [-\frac{3}{2}, \frac{3}{2}]$, which equals to $1$ in the region $B(0,2+2c) \times [-1, 1]$. Set
\begin{equation}\label{mf}
	\mathbf{g}=\chi(t,x)(g-g(0))+g(0),
\end{equation}
where $g$ is denoted in Definition \ref{metricd}. Consider the following Cauchy problem
\begin{equation}\label{pf}
	\begin{cases}
		& \square_\mathbf{g} v^i =-[ia]e^{{\rho}}c^2_s\partial^a \varpi+Q^i + E^i,
		\\
		& \square_\mathbf{g} \rho=\mathcal{D},
		\\
		& \mathbf{T}\varpi =0,
	\end{cases}
\end{equation}
with the initial data
\begin{equation*}
	\begin{split}
		& (\bv,\rho,\varpi)|_{t=0}=(\bv_0,\rho_0,\varpi_0),
		\\
		& (\partial_t \bv, \partial_t \rho)|_{t=0}=(-\bv_0\cdot \nabla \bv_0+c^2_s\nabla \rho_0,-\bv_0 \cdot \nabla \rho_0-\mathrm{div}\bv_0),
	\end{split}
\end{equation*}
where, $\mathbf{g}$ is defined in \eqref{mf}. We denote by $\mathcal{H}$ the family of smooth solutions $(\bv, \rho, \varpi)$ to \eqref{pf} for $t \in [-2,2]$, with initial data $(\bv_0, \rho_0, \varpi_0)$ supported in $B(0,2+c)$, where
\begin{equation*}
	\varpi_0=\bar{\rho}^{-1}\mathrm{e}^{-\rho_0} { {\mathrm{curl}\bv_0} },
\end{equation*}
and for which
\begin{align}\label{aa1}
	& \|\bv_0 \|_{H^s} +  \|\rho_0 \|_{H^s}+ \|  \varpi_0 \|_{H^{2}}  \leq \epsilon_3,\\
	\label{aa2}
	&\|(\bv, \rho)\|_{L^\infty_t H_x^{s}}+ \|(\partial_t \bv, \partial_t \rho)\|_{L^\infty_t H_x^{s-1}} \\
	\notag &\quad  + \| \varpi \|_{L^\infty_t H_x^{2}}+\| \partial_t \varpi \|_{L^\infty_t H_x^{1}}+  \| d \bv, d \rho \|_{L^4_t C_x^\delta} \leq 2 \epsilon_2.
\end{align}
Then, the bootstrap argument can be stated as follows:
\begin{proposition}\label{p5}
	Let $s \in (\frac74,2]$ and $\delta$ be stated as \eqref{a1}. Suppose that \eqref{Sa} and \eqref{a0} hold. Then there is a continuous functional $G: \mathcal{H} \rightarrow \mathbb{R}^{+}$, satisfying $G(0)=0$, so that for each $(\bv, \rho, \varpi) \in \mathcal{H}$ satisfying $G(\bv, \rho) \leq 2 \epsilon_1$ the following hold:

	$\mathrm{(1)}$  The function $(\bv, \rho, \varpi)$ on $[-2,2]\times \mathbb{R}^2$ satisfies
	\begin{equation}\label{ag}
		G(\bv, \rho) \leq \epsilon_1.
	\end{equation}

	$\mathrm{(2)}$  The following estimate holds,
	\begin{multline}\label{aa3}
		\|(\bv, \rho)\|_{L^\infty_t H_x^{s}}+ \|(\partial_t \bv, \partial_t \rho)\|_{L^\infty_t H_x^{s-1}}\\
		+ \| \varpi \|_{L^\infty_t H_x^{2}}+\| \partial_t \varpi \|_{L^\infty_t H_x^{1}}+  \| d \bv, d \rho \|_{L^4_t C_x^\delta} \leq \epsilon_2.
	\end{multline}

	$\mathrm{(3)}$  For $1 \leq r \leq s+1$, the equation \eqref{linear} endowed with the metric $\mathbf{g}$ is well-posed in $H^r \times H^{r-1}$. Moreover, the following estimates
	\begin{align}\label{st3}
		\|\left<\partial \right>^{k} f \|_{L^4_t L^\infty_x} &\lesssim   \|f_0\|_{H^r}+ \|f_1\|_{H^{r-1}} \\
		\notag  &\quad +\| \Theta \|_{L^\infty_tH^{r-1} \cap L^1_tH_x^r}+\|B\|_{L^1_tH_x^{r-1}}, \quad k< r-\frac{3}{4},
	\end{align}
	and
	\begin{align}\label{e30}
		\|f \|_{L^\infty_t H^s_x}+\| \partial_t f \|_{L^\infty_t H^{s-1}_x} &\lesssim   \|f_0\|_{H^r}+ \|f_1\|_{H^{r-1}} \\
		\notag   &\quad +\| \Theta \|_{L^\infty_tH^{r-1} \cap L^1_tH_x^r}+\|B\|_{L^1_tH_x^{r-1}},
	\end{align}
	hold.
\end{proposition}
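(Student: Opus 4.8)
The plan is to run a continuity argument whose only genuinely hard ingredient is conclusion (3) --- the Strichartz estimate for $\square_{\mathbf{g}}f=0$ --- and then to deduce (1) and (2) from it by combining the energy estimate of Theorem~\ref{be} with Duhamel's formula for the wave--transport system \eqref{pf}. First I would fix the functional. A convenient choice is
\[
G(\bv,\rho)\;=\;\|d\bv,\,d\rho\|_{L^4_t C^\delta_x}\;+\;\mathcal{N}(\bv,\rho),
\]
where $\mathcal{N}$ collects the regularity norms of the optical functions of $\mathbf{g}$ (equivalently, of its characteristic hypersurfaces) that will be estimated in Section~6, plus a small fixed multiple of the lower-order energy of $(\bv,\rho,\varpi)$. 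Each summand is a norm vanishing identically at the zero solution and depending continuously on $(\bv,\rho,\varpi)\in\mathcal{H}$, so $G$ is continuous with $G(0)=0$; since $\{G\le\epsilon_1\}$ is then closed and, once the implication of the proposition is established, relatively open in the scaling/localization family joining a given solution to $0$, this is exactly the structure that upgrades the a priori bound \eqref{aa2} to \eqref{aa3}.

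Assume now $G(\bv,\rho)\le 2\epsilon_1$, so in particular $\|d\bv,d\rho\|_{L^4_t L^\infty_x}\le 2\epsilon_1$. On $[-2,2]$, H\"older's inequality gives $\int_0^t(1+\|d\bv,d\rho\|_{L^\infty_x})^2\,d\tau\lesssim 1+|t|^{1/2}(2\epsilon_1)^2\lesssim 1$, so Theorem~\ref{be} yields $E(t)\lesssim E_0(1+E_0^{1/2})\lesssim\epsilon_3\ll\epsilon_2$; since $E(t)$ already contains $\|(\bv,\rho)\|_{H^s_x}$, $\|\varpi\|_{H^2_x}$, $\|(\partial_t\bv,\partial_t\rho)\|_{H^{s-1}_x}$ and $\|\partial_t\varpi\|_{H^1_x}$, this is the energy part of \eqref{aa3}.

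\emph{Conclusion (3), the main obstacle.} The metric $\mathbf{g}$ is a smooth function of $(\bv,\rho)$ coinciding with $g$ on the region where the compactly supported solution lives, so from $\bv,\rho\in L^\infty_t H^s_x$ and the bootstrap hypothesis $d\bv,d\rho\in L^4_t C^\delta_x$ one checks that $\mathbf{g}$ satisfies the structural hypotheses of Smith--Tataru~\cite{ST} in two dimensions. The delicate point is supplying the regularity of the characteristic hypersurfaces of $\mathbf{g}$ that their parametrix demands: for $(\bv,\rho)$ the required characteristic energy estimates come directly from the symmetric hyperbolic system \eqref{sq} restricted to null cones and do not involve $\varpi$, but for $\varpi$ the elliptic estimates that recover all derivatives of $\varpi$ on a Cauchy slice from $\varpi$ and $\Delta\varpi$ have no counterpart on a null hypersurface. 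Here one instead exploits the good transport structure of Lemma~\ref{PW} --- equation \eqref{011} for $\Delta\varpi-\partial\rho\,\partial\varpi$, its Hodge-projected form, and the commutator bound of Lemma~\ref{jhr} --- to close characteristic energy estimates for the second derivatives of $\varpi$; this is carried out in Section~6. Granting those estimates, the Smith--Tataru wave-packet construction produces \eqref{st3} and \eqref{e30}.

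Finally, to upgrade the Strichartz norm itself I would apply \eqref{st3} with $r=s$ --- admissible since $1+\delta<s-\tfrac34$ by \eqref{a1}, which lets $\langle\partial\rangle^{1+\delta}$ reach $\partial\bv$ and $\partial\rho$ --- together with Duhamel's formula for $\square_{\mathbf{g}}v^i=-[ia]e^{\rho}c_s^2\partial^a\varpi+Q^i+E^i$ and $\square_{\mathbf{g}}\rho=\mathcal{D}$. The source $\partial\varpi$ is controlled in $L^1_t H^{s-1}_x$ by $\varpi\in L^\infty_t H^2_x$ (with room to spare since $s-1<1$), and the null forms $Q^i,E^i,\mathcal{D}$ of \eqref{Di}, being quadratic in $d\bv,d\rho$, are handled by the product estimates of Lemma~\ref{ps} and the energy and Strichartz bounds already in hand; recovering $\partial_t\bv,\partial_t\rho$ algebraically through \eqref{pf}, this gives $\|d\bv,d\rho\|_{L^4_t C^\delta_x}\lesssim\epsilon_3+\epsilon_2^2\ll\epsilon_2$, completing \eqref{aa3}. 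As every term in $G$ is now $\lesssim\epsilon_2\ll\epsilon_1$, conclusion (1), i.e.\ \eqref{ag}, follows. I expect essentially all the difficulty to sit in the characteristic energy estimate for $\varpi$ along null hypersurfaces and in matching the resulting null-frame regularity to the precise requirements of the two-dimensional Smith--Tataru parametrix.
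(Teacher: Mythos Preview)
Your proposal is correct and follows essentially the same route as the paper: characteristic energy estimates (Section~6, with the delicate $\varpi$ case handled via \eqref{011}, the Hodge projection, and Lemma~\ref{jhr}) supply the null-hypersurface regularity needed for Smith--Tataru's parametrix, which yields (3); then Duhamel on the wave--transport system \eqref{pf} gives the Strichartz bound on $(d\bv,d\rho)$, and Theorem~\ref{be} closes the energy part of (2). The one noteworthy difference is your choice of $G$: the paper takes $G$ to be \emph{purely} the optical-function regularity $G(\bv,\rho)=\sup_{\theta,r}\vert\kern-0.25ex\vert\kern-0.25ex\vert d\phi_{\theta,r}-dt\vert\kern-0.25ex\vert\kern-0.25ex\vert_{2,2,\Sigma_{\theta,r}}$ (equation~\eqref{e50}), whereas you also include $\|d\bv,d\rho\|_{L^4_tC^\delta_x}$ as a summand --- this is harmless but redundant, since membership in $\mathcal{H}$ already forces that norm to be $\le 2\epsilon_2\ll\epsilon_1$, and the paper's leaner choice keeps the bootstrap (Propositions~\ref{r1}--\ref{r2}) focused on exactly the quantity the parametrix needs.
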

\begin{proof}[{Proof of Proposition \ref{p4} by Proposition \ref{p5}}]
	The initial data in Proposition~\ref{p4} satisfies
	\begin{equation*}
		\| \bv_0 \|_{H^s}+\| \rho_0 \|_{H^s}+\| \varpi_0 \|_{H^2} \leq \epsilon_3.
	\end{equation*}
	We denote by $\text{A}$ the subset of those $\gamma \in [0,1]$ such that the equation \eqref{pf} admits a smooth solution $u^\gamma$ having the initial data
	\begin{equation*}
		\begin{split}
			\bv^\gamma|_{t=0}=&\gamma \bv_0,
			\\
			\rho^\gamma|_{t=0}=&\gamma \rho_0,
			\\
			\varpi_0^\gamma|_{t=0}=& \bar{\rho} \mathrm{e}^{-\rho^\gamma(0)}\mathrm{curl}\bv_0^\gamma,
		\end{split}
	\end{equation*}
	and such that $G(\bv^\gamma, \rho^\gamma) \leq \epsilon_1$ and \eqref{aa3} hold.

	If $\gamma=0$, then
	\begin{equation*}
		(\bv^\gamma, \rho^\gamma, \varpi^\gamma)(t,x)=(\mathbf{0},0,0).
	\end{equation*}
	is a smooth solution of \ref{pf} with initial data
	\begin{equation*}
		(\bv^\gamma, \rho^\gamma, \varpi^\gamma)(0,x)=(\mathbf{0},0,0).
	\end{equation*}
	Thus, the set $\text{A}$ is not empty. If we can prove that $\text{A}=[0,1]$, then $1 \in \text{A}$. As a result, the Proposition \ref{p4} holds. It suffices for us to prove that $\text{A}$ is both open and closed in $[0,1]$.

	(1) $\text{A}$ is open. Let $\gamma \in \text{A}$. Then $(\bv^\gamma, \rho^\gamma, \varpi^\gamma)$ is a smooth solution to \eqref{pf}, where
	\begin{equation*}
		\varpi^\gamma= \bar{\rho} \mathrm{e}^{-\rho^\gamma}\mathrm{curl}\bv^\gamma.
	\end{equation*}
	Let $\beta$ be close to $\gamma$.  Since $(\bv^\gamma,h^\gamma,\varpi^\gamma)$ is smooth, a perturbation argument
	shows that the equation \eqref{pf} has a smooth solution $(\bv^\beta, \rho^\beta, \varpi^\beta)$, which
	depends continuously on $\beta$. By the continuity of $G$, for $\beta$ close to $\gamma$, it follows that
	\begin{equation*}
		G(\bv^\beta, \rho^\beta) \leq 2\epsilon_1,
	\end{equation*}
	and also \eqref{aa2} holds. Using Proposition \ref{p5}, we have
	\begin{equation*}
		G(\bv^\beta, \rho^\beta) \leq \epsilon_1,
	\end{equation*}
	and \eqref{aa3}. Thus, we have showed that $\beta \in \text{A}$.

	(2) $\text{A}$ is closed. Let $\gamma_k \in \text{A}, k \in \mathbb{N}^+$ and $\lim_{k \rightarrow \infty} \gamma_k = \gamma$.
	Then there exists a sequence $\{(\bv^{\gamma_k}, \rho^{\gamma_k}, \varpi^{\gamma_k}) \}_{k \in \mathbb{N}^+}$ of smooth solutions to \eqref{pf} and
	\begin{multline*}
		\|(\bv^{\gamma_k}, \rho^{\gamma_k})\|_{L^\infty_t H_x^{s}}+ \|(\partial_t \bv^{\gamma_k}, \partial_t \rho^{\gamma_k})\|_{L^\infty_t H_x^{s-1}}
		\\
		+ \| \varpi^{\gamma_k} \|_{L^\infty_t H_x^{2}}+\| \partial_t \varpi^{\gamma_k} \|_{L^\infty_t H_x^{1}}+  \| d \bv^{\gamma_k}, d \rho^{\gamma_k} \|_{L^4_t C_x^\delta} \leq \epsilon_2.
	\end{multline*}
	Then there exists some subsequence such that there is a limit $ (\bv^{\gamma}, \rho^{\gamma}, \varpi^{\gamma}) $ satisfying
	\begin{multline*}
		\|(\bv^{\gamma}, \rho^{\gamma})\|_{L^\infty_t H_x^{s}}+ \|(\partial_t \bv^{\gamma}, \partial_t \rho^{\gamma})\|_{L^\infty_t H_x^{s-1}} \\
		+ \| \varpi^{\gamma} \|_{L^\infty_t H_x^{2}}+\| \partial_t \varpi^{\gamma} \|_{L^\infty_t H_x^{1}}+  \| d \bv^{\gamma}, d \rho^{\gamma} \|_{L^4_t C_x^\delta} \leq \epsilon_2,
	\end{multline*}
	and $G(\bv, \rho) \leq \epsilon_1$. Therefore, $\gamma \in \text{A}$. We could conclude that $\text{A}=[0,1]$. So we complete the proof of Proposition \ref{p4}.
\end{proof}
\section{Regularity of the characteristic hypersurface}
Recalling Proposition \ref{p5}, we know that the Strichartz estimate \eqref{st3} plays a crucial role. In some sense, the Strichartz estimate is a class of Fourier restriction estimate \cite{Stri}. So we need to find a background hypersurface to work. In this section, we will define the characteristic hypersurface and discuss its regularity.

Let $(\bv,\rho,\varpi) \in \mathcal{H}$ and the corresponding metric $ \mathbf{g}$ which equals the Minkowski metric for $t \in [-2, -\frac{3}{2}]$. Following Smith-Tataru's paper (\cite{ST}), for every $\theta \in \mathbb{S}^1$, we consider a foliation of the slice $t=-2$ by taking level sets of the function $r_\theta(-2,x)=\theta \cdot x +2$. Noting the fact that
$\theta\cdot dx-dt$ would then be a covector field over $t=-2$ that is conormal to the level
sets of $r_\theta(-2,x)$, so we set $\Gamma_{\theta}$ to be the flowout of this section under the
Hamilton flow of $\mathbf{g}$. A key idea in the proof of Strichartz estimates is to establish that,
for each $\theta$, $\Gamma_\theta$ is the graph of a null covector
field given by $dr_\theta$, where $r_\theta$ is a smooth extension of $\theta \cdot x -t$, and that the
level sets of $r_\theta$ are small perturbations of the level sets of the function $\theta \cdot x -t$
in a certain norm captured by $G(\bv,\rho)$. In establishing Proposition \ref{p5} we will
actually establish that $(\bv,\rho,\varpi)\in \mathcal{H}$ implies $\Gamma_\theta$ is the graph of an appropriate null
covector field $dr_\theta$, so we only define $G(\bv,\rho)$ in this situation.

Assume that $\Gamma_\theta$ and $r_\theta$ are as above. Let $\Sigma_{\theta,r}$ for $t\in \mathbb{R}$ denote
the level sets of $r_\theta$. Thus, the characteristic hypersurface $\Sigma_{\theta,r}$ is the flowout of
the set $\theta \cdot x=r-2$ along the null geodesic flow in the direction $\theta$ at $t=-2$.

We now introduce an orthonormal set of coordinates on $\mathbb{R}^2$ by setting $x_{\theta}=\theta \cdot x$. Let $x'_{\theta}$ be given orthonormal coordinates on the hyperplane perpendicular to $\theta$, which then define coordinate on $\mathbb{R}^2$ by projection along $\theta$. Then $(t,x'_{\theta})$ induces the coordinate on $\Sigma_{\theta,r}$, where $\Sigma_{\theta,r}$ is given by
\begin{equation*}
	\Sigma_{\theta,r}=\left\{ (t,x): x_{\theta}-\phi_{\theta, r}=0  \right\}
\end{equation*}
for a smooth function $\phi_{\theta, r}(t,x'_{\theta})$. We now introduce two norms for functions defined on $[-2,2] \times \mathbb{R}^2$,
\begin{equation*}
	\begin{split}
		&\vert\kern-0.25ex\vert\kern-0.25ex\vert u\vert\kern-0.25ex\vert\kern-0.25ex\vert_{2, \infty} = \sup_{-2 \leq t \leq 2} \sup_{0 \leq j \leq 1} \| \partial_t^j u(t,\cdot)\|_{H^{2-j}(\mathbb{R}^2)},
		\\
		& \vert\kern-0.25ex\vert\kern-0.25ex\vert u\vert\kern-0.25ex\vert\kern-0.25ex\vert_{2,2} = \big( \sup_{0 \leq j \leq 1} \int^{2}_{-2} \| \partial_t^j u(t,\cdot)\|^2_{H^{2-j}(\mathbb{R}^2)} dt \big)^{\frac{1}{2}}.
	\end{split}
\end{equation*}
The same notation applies for functions defined on $[-2,2] \times \mathbb{R}^2$. Denote
\begin{equation*}
	\vert\kern-0.25ex\vert\kern-0.25ex\vert f\vert\kern-0.25ex\vert\kern-0.25ex\vert_{2,2,\Sigma_{\theta,r}}=\vert\kern-0.25ex\vert\kern-0.25ex\vert f|_{\Sigma_{\theta,r}} \vert\kern-0.25ex\vert\kern-0.25ex\vert_{2,2},
\end{equation*}
where the right-hand side is the norm of the restriction of $f$ to ${\Sigma_{\theta,r}}$, taken over the $(t,x'_{\theta})$ variables used to parametrise ${\Sigma_{\theta,r}}$. Besides, the notation
\begin{equation*}
	\|f\|_{H^a(\Sigma_{\theta,r})}
\end{equation*}
denotes the $H^{a-1}(\mathbb{R})$ norm of $f$ restricted to the time $t$ slice of ${\Sigma_{\theta,r}}$ using the $x'_{\theta}$ coordinates on ${\Sigma^t_{\theta,r}}$.

We now set
\begin{equation}\label{e50}
	G(\bv, {\rho})= \sup_{\theta, r} \vert\kern-0.25ex\vert\kern-0.25ex\vert d \phi_{\theta,r}-dt\vert\kern-0.25ex\vert\kern-0.25ex\vert_{s,2,{\Sigma_{\theta,r}}}.
\end{equation}
\begin{proposition}\label{r1}
	Assume $s \in (\frac{7}{4},2]$. Let $(\bv, \rho, \varpi) \in \mathcal{H}$ so that $G(\bv, \rho) \leq 2 \epsilon_1$. Then
	\begin{multline}\label{e51}
		\vert\kern-0.25ex\vert\kern-0.25ex\vert  \mathbf{g}^{\alpha \beta}-\mathbf{m}^{\alpha \beta} \vert\kern-0.25ex\vert\kern-0.25ex\vert_{s,2,{\Sigma_{\theta,r}}} \\
		+ \vert\kern-0.25ex\vert\kern-0.25ex\vert 2^j({\mathbf{g}}^{\alpha \beta}- S_j {\mathbf{g}}^{\alpha \beta}), d S_j  {\mathbf{g}}^{\alpha \beta}, \lambda^{-1} \partial d S_j  {\mathbf{g}}^{\alpha \beta}  \vert\kern-0.25ex\vert\kern-0.25ex\vert_{s-1,2,{\Sigma_{\theta,r}}} \lesssim \epsilon_2.
	\end{multline}
\end{proposition}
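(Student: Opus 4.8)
\medskip
\noindent\textbf{Proof proposal.} I would organize the argument in three stages: (i) reduce the frequency–localized quantities to a single estimate for $\mathbf g^{\alpha\beta}-\mathbf m^{\alpha\beta}$ in the $|||\cdot|||_{2,2,\Sigma_{\theta,r}}$ norm; (ii) reduce that, via a surface Moser estimate, to the same estimate for $(\bv,\rho)$; (iii) prove the latter by transplanting the bootstrap bounds \eqref{aa2} to the characteristic hypersurface through characteristic energy estimates for the wave–transport structure. For (i), since $\mathbf m$ is constant we have $\mathbf g^{\alpha\beta}_\lambda-\mathbf m^{\alpha\beta}=S_{<\lambda}(\mathbf g^{\alpha\beta}-\mathbf m^{\alpha\beta})$, so each of $\lambda(\mathbf g^{\alpha\beta}-\mathbf g^{\alpha\beta}_\lambda)$, $d\mathbf g^{\alpha\beta}_\lambda$, $\lambda^{-1}\partial d\mathbf g^{\alpha\beta}_\lambda$ is obtained from $\mathbf g^{\alpha\beta}-\mathbf m^{\alpha\beta}$ by a Littlewood–Paley projection together with at most one derivative carrying a compensating power of $\lambda$; Bernstein's inequality, applied in $x'_\theta$ on each $t$–slice of $\Sigma_{\theta,r}$ and in $t$ for the $j=1$ component of the norm, bounds all three, uniformly in $\lambda$, by $|||\mathbf g^{\alpha\beta}-\mathbf m^{\alpha\beta}|||_{2,2,\Sigma_{\theta,r}}$. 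For (ii), \eqref{mf} and Definition \ref{metricd} give $\mathbf g^{\alpha\beta}-\mathbf m^{\alpha\beta}=\chi\,F^{\alpha\beta}(\bv,\rho)$ with $F^{\alpha\beta}$ smooth and $F^{\alpha\beta}(0)=0$; since $(\bv,\rho)$ are uniformly bounded (by \eqref{aa2}, Sobolev embedding, and the Strichartz bound), a Moser estimate on $\Sigma_{\theta,r}$ in the spirit of Lemma \ref{jiaohuan0} reduces matters to proving $|||\bv|||_{2,2,\Sigma_{\theta,r}}+|||\rho|||_{2,2,\Sigma_{\theta,r}}\lesssim\epsilon_2$, uniformly in $(\theta,r)$.

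Before stage (iii) I would pin down the geometry of $\Sigma_{\theta,r}$. By hypothesis $G(\bv,\rho)\le2\epsilon_1$, so by \eqref{e50} we have $|||d\phi_{\theta,r}-dt|||_{2,2,\Sigma_{\theta,r}}\le2\epsilon_1$; using $H^2(\mathbb R)\hookrightarrow L^\infty(\mathbb R)$, the fact that the $j=1$ part of the norm controls $\partial_t(d\phi_{\theta,r}-dt)$ in $L^2_tH^1_{x'}\subset L^1_tL^\infty_{x'}$, and $d\phi_{\theta,r}-dt=0$ at $t=-1$ (where $\mathbf g=\mathbf m$ and $r_\theta=x_\theta-t$), one gets $\|d\phi_{\theta,r}-dt\|_{L^\infty_{t,x'}}\lesssim\epsilon_1$. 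Thus every $\Sigma_{\theta,r}$ is a Lipschitz graph $x_\theta=\phi_{\theta,r}(t,x'_\theta)$ with Lipschitz constant $O(\epsilon_1)$ — an $O(\epsilon_1)$ perturbation of a flat $\mathbf m$–null hyperplane — and, combined with the $L^4_tC^\delta_x$ control of $d\mathbf g$ in \eqref{aa2} and the eikonal identity $\mathbf g^{\alpha\beta}\partial_\alpha r_\theta\partial_\beta r_\theta=0$, the tangent coordinate fields $L_0=\partial_t|_{\Sigma}$ (close to the null generator) and $e_0=\partial_{x'_\theta}|_{\Sigma}$, together with their connection coefficients, stay $O(\epsilon_1+\epsilon_2)$–close to the flat ones. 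Consequently all trace, Moser and integration-by-parts arguments on $\Sigma_{\theta,r}$ carry uniform constants; moreover, in the $(t,x'_\theta)$-coordinates the norm $|||\cdot|||_{2,2,\Sigma_{\theta,r}}$ controls precisely $f,\,e_0f,\,e_0^2f,\,L_0f,\,e_0L_0f$ in $L^2(\Sigma_{\theta,r})$ — every second-order tangential derivative except $L_0^2f$ — which is exactly what characteristic energy estimates along an outgoing null surface furnish.

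Stage (iii) is the analytic core, and it cannot come from plain restriction since $(\bv,\rho)$ lie only in $H^s$, $s>\tfrac74$. Instead I would use that, on the region where $\chi\equiv1$ (which contains $\Sigma_{\theta,r}$), $(\bv,\rho)$ satisfy both the symmetric hyperbolic system \eqref{sq} and the wave–transport system \eqref{fc}, $\square_{\mathbf g}(\bv,\rho)=\mathcal N$, with $\mathcal N$ consisting of the $g$-null forms $\bQ,\bE,\mathcal D$ of \eqref{Di} and the coupling term $-[ia]e^{\rho}c_s^2\,\partial^a\varpi$ in the $v^i$-equation. A characteristic energy estimate for this system along $\Sigma_{\theta,r}$ controls $L_0(\bv,\rho)$ and $e_0(\bv,\rho)$ on $\Sigma_{\theta,r}$, and commuting once more with $e_0$ and with $L_0$ and reapplying it controls $e_0^2(\bv,\rho)$ and $e_0L_0(\bv,\rho)$, i.e.\ the remaining entries of the $|||\cdot|||_{2,2,\Sigma_{\theta,r}}$ norm; the commutator sources $[\square_{\mathbf g},X](\bv,\rho)$, $X\in\{e_0,L_0\}$, are linear in two derivatives of $(\bv,\rho)$ with coefficients built from $d\mathbf g$ and the null connection, and are absorbed using the $L^4_tC^\delta_x$ bound on $d\mathbf g$, the $O(\epsilon_1)$–geometry above, and the null structure of $\bQ,\bE,\mathcal D$. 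The one source that is not a null form, $\partial\varpi$, I would control through a separate characteristic estimate for $\varpi$: by Lemma \ref{PW} and the Hodge/Riesz identity \eqref{00}, $\partial^2_{ij}\varpi-P_{ij}(\partial\rho\,\partial\varpi)$ solves a transport equation along $\mathbf T$ whose right-hand side is, by Lemma \ref{jhr} and the product estimate Lemma \ref{ps}, bounded in $L^2_tH^a_x$ with $a>\tfrac12$; integrating along the ($\Sigma_{\theta,r}$-transversal, timelike) flow of $\mathbf T$ from $\{t=-1\}$ and using $\varpi_0\in H^2$ yields $|||\varpi|||_{2,2,\Sigma_{\theta,r}}\lesssim\epsilon_2$, which feeds back into the $\bv$-estimate (directly as a source, and through the Hodge relation $\Delta\bv=\partial\,\mathrm{div}\bv+\partial^{\bot}\mathrm{curl}\bv$ of \eqref{009}, with $\mathrm{curl}\bv$ determined by $\varpi$ via \eqref{sv}, when passing to the second tangential derivative of $\bv$). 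Throughout, I would keep track that every commutator and Moser remainder is at least quadratic in the small quantities $(\bv,\rho,\varpi,d\phi_{\theta,r}-dt)$, so that the bound closes at level $\epsilon_2$ given $\epsilon_3\ll\epsilon_2\ll\epsilon_1\ll\epsilon_0\ll1$.

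The step I expect to be the main obstacle is exactly the propagation of second-order control of $\varpi$ to the characteristic hypersurface. On a slice $\{t=\mathrm{const}\}$ one recovers $\partial^2\varpi$ from $\varpi$ and $\Delta\varpi$ by elliptic regularity, but on the null surface $\Sigma_{\theta,r}$ this elliptic gain is unavailable; this is precisely why one must use the \emph{good} transport equation \eqref{011} for $\Delta\varpi-\partial\rho\,\partial\varpi$ (the naive equation for $\Delta\varpi$ would require $\partial\varpi\in L^\infty_x$, which is not available here) and why the commutator estimate of Lemma \ref{jhr}, moving the Riesz operators $P_{ij}$ past $\mathbf T$ at the cost of only $\|\partial\bv\|_{C^\delta_x}$, is indispensable. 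The remaining work is bookkeeping: verifying that the null-frame connection coefficients produced by the commutations never cost more regularity than the available $L^4_tC^\delta_x$ bound on $d\mathbf g$ supplies.
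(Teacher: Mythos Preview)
Your overall architecture is right --- reduce the frequency-localized pieces to the single bound $\vert\kern-0.25ex\vert\kern-0.25ex\vert \mathbf g^{\alpha\beta}-\mathbf m^{\alpha\beta}\vert\kern-0.25ex\vert\kern-0.25ex\vert_{s,2,\Sigma}\lesssim\epsilon_2$, then reduce that via a Moser/product estimate on $\Sigma$ to $\vert\kern-0.25ex\vert\kern-0.25ex\vert \bv,\rho\vert\kern-0.25ex\vert\kern-0.25ex\vert_{s,2,\Sigma}\lesssim\epsilon_2$ --- and this is exactly the paper's route. But two of your three stages diverge from the paper in ways worth flagging.

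\textbf{Stage (i) has a genuine gap.} The projections $S_\lambda$ are Fourier multipliers in the full spatial variable $x\in\mathbb R^2$, not in the tangential variable $x'_\theta$ on $\Sigma_{\theta,r}$. They do not commute with restriction to a curved hypersurface, so you cannot bound $(S_\lambda h)|_\Sigma$ or $(h-S_\lambda h)|_\Sigma$ by $h|_\Sigma$ via Bernstein on the surface: a function small on $\Sigma$ but large nearby can have large mollification on $\Sigma$. The paper closes this differently (Lemma~\ref{freU}/\ref{fre}): if $\bU=(\bv,p(\rho))$ solves the symmetric hyperbolic system \eqref{sq}, then for any order-zero multiplier $P$ bounded on $L^\infty$, $P\bU$ solves the same system with commutator source $-\sum_i[P,A_i(\bU)]\partial_{x_i}\bU\in L^1_tH^{s-1}_x$. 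One then reapplies the characteristic energy estimate of Lemma~\ref{te1} to $P\bU$ with $P\in\{2^j(I-S_j),\ S_j,\ 2^{-j}\partial S_j\}$, obtaining the three frequency-localized bounds directly. No Bernstein-on-$\Sigma$ is invoked.

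\textbf{Stage (iii) is correct in spirit but much heavier than what the paper does.} You propose characteristic energy estimates for the wave system $\square_{\mathbf g}(\bv,\rho)=\mathcal N$, commuting with tangential frames and absorbing the vorticity source $\partial\varpi$ via the good transport equation \eqref{011} for $\Delta\varpi-\partial\rho\,\partial\varpi$. The paper instead uses the \emph{first-order symmetric hyperbolic} form \eqref{sq}, which is homogeneous ($\bF=0$) and contains no vorticity at all. After the coordinate change $x_2\mapsto x_2-\phi(t,x')$ straightening $\Sigma$, one multiplies by $\Lambda_{x'}^\beta\tilde\bU$ and integrates; the only new terms come from $d\phi$, controlled by $G\le2\epsilon_1$ and \eqref{e55}. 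This yields Corollary~\ref{crh} directly, with no need for $\varpi$-estimates on $\Sigma$, null-form structure, or commutation with $\{l,e_1\}$. The transport analysis of $\varpi$ on $\Sigma$ (Lemma~\ref{te21}) and the Riesz commutator of Lemma~\ref{jhr} are indeed essential in the paper, but they enter later, in the Ricci decomposition (Corollary~\ref{Rfenjie}) feeding Proposition~\ref{r2}, not in Proposition~\ref{r1}.
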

\begin{proposition}\label{r2}
	Assume $s \in (\frac{7}{4},2]$ and $\delta\in (0,s-\frac74)$. Let $(\bv, \rho, \varpi) \in \mathcal{H}$ so that $G(\bv, \rho) \leq 2 \epsilon_1$. Then
	\begin{equation}\label{G}
		G(\bv, \rho) \lesssim \epsilon_2.
	\end{equation}
	Furthermore, for each $t$, we have
	\begin{equation}\label{e52}
		\|d \phi_{\theta,r}(t,\cdot)-dt \|_{C^{1,\delta}_{x'}} \lesssim \epsilon_2+ \sup_{i,j} \| d {\mathbf{g}}(t,\cdot) \|_{C^\delta_x(\mathbb{R}^2)}.
	\end{equation}
\end{proposition}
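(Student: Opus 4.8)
The plan is to bootstrap the bound $G(\bv,\rho)\le 2\epsilon_1$ up to $G(\bv,\rho)\lesssim\epsilon_2$ by analysing the eikonal equation satisfied by $\phi_{\theta,r}$. Recall that $\Sigma_{\theta,r}=\{x_\theta=\phi_{\theta,r}(t,x'_\theta)\}$ is characteristic for $\mathbf g$, so $\phi=\phi_{\theta,r}$ solves the Hamilton–Jacobi (eikonal) equation $\mathbf g^{\alpha\beta}\partial_\alpha(x_\theta-\phi)\,\partial_\beta(x_\theta-\phi)=0$. Writing this out gives a first-order nonlinear PDE expressing $\partial_t\phi$ (the "time slope") in terms of the spatial gradient $\partial_{x'}\phi$ and the components $\mathbf g^{\alpha\beta}$ restricted to $\Sigma_{\theta,r}$; at the Minkowski background $\phi\equiv t$ and $\partial_t\phi=1$. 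Differentiating the eikonal equation once in the $(t,x'_\theta)$ variables produces a transport equation along the null generators of $\Sigma_{\theta,r}$ for $d\phi-dt$, with source terms that are (schematically) $d\mathbf g$ contracted with $d\phi$, plus lower-order products. The first step is therefore to set up this transport/energy framework intrinsically on $\Sigma_{\theta,r}$, using the $(t,x'_\theta)$ parametrisation, and to record that the generator vector field is a small perturbation of $\partial_t$ under the hypothesis $G\le 2\epsilon_1$.

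The second step is to run the energy estimate for $d\phi-dt$ in the $\vert\kern-0.25ex\vert\kern-0.25ex\vert\cdot\vert\kern-0.25ex\vert\kern-0.25ex\vert_{2,2,\Sigma_{\theta,r}}$ norm. Here I would use exactly the metric regularity already established in Proposition \ref{r1}: the quantities $\mathbf g^{\alpha\beta}-\mathbf m^{\alpha\beta}$, together with the Littlewood–Paley truncated versions $\mathbf g^{\alpha\beta}_\lambda$ and the weighted derivatives $d\mathbf g^{\alpha\beta}_\lambda$, $\lambda^{-1}\partial d\mathbf g^{\alpha\beta}_\lambda$, are all controlled by $\epsilon_2$ in the $\vert\kern-0.25ex\vert\kern-0.25ex\vert\cdot\vert\kern-0.25ex\vert\kern-0.25ex\vert_{1,2,\Sigma_{\theta,r}}$ and $\vert\kern-0.25ex\vert\kern-0.25ex\vert\cdot\vert\kern-0.25ex\vert\kern-0.25ex\vert_{2,2,\Sigma_{\theta,r}}$ norms. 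Feeding these into the source terms of the transport equation for $d\phi-dt$, together with the product estimates of Lemma \ref{ps} (to handle $H^a$-type products on the one- and two-dimensional slices with $a>\tfrac12$), and closing via Gronwall in $t\in[-1,1]$ with zero data at $t=-1$ (where $\mathbf g=\mathbf m$, hence $\phi=t$), yields $\vert\kern-0.25ex\vert\kern-0.25ex\vert d\phi_{\theta,r}-dt\vert\kern-0.25ex\vert\kern-0.25ex\vert_{2,2,\Sigma_{\theta,r}}\lesssim\epsilon_2$. Taking the supremum over $\theta,r$ gives \eqref{G}.

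For the pointwise-in-time estimate \eqref{e52}, I would freeze $t$ and view the eikonal equation as an ODE/transport relation along the generators inside the slice, now measured in Hölder norms $C^{1,\delta}_{x'}$ rather than $L^2$-based norms. The bound $\|d\mathbf g(t,\cdot)\|_{C^\delta_x}$ enters directly as the forcing, and the $\epsilon_2$ term comes from integrating the previously obtained $L^2$-type control together with a Sobolev embedding on the slice (using $\delta\in(0,s-\tfrac74)$ so that $H^{2-j}$ on $\mathbb R^2$ embeds into the relevant Hölder class). Combining the contribution already absorbed into $\epsilon_2$ with the genuinely $t$-dependent Hölder norm of $d\mathbf g$ gives \eqref{e52}.

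The main obstacle is the second step: the transport equation for $d\phi-dt$ is quasilinear and its coefficients themselves depend on $d\phi$ through the restriction of $\mathbf g$ to the moving hypersurface $\Sigma_{\theta,r}$, so one must simultaneously control the geometry of $\Sigma_{\theta,r}$ (via $G\le 2\epsilon_1$) and the solution of the transport equation, and the commutators between the generator vector field and the differentiations used to reach second-order regularity are exactly where the weighted bounds on $\lambda^{-1}\partial d\mathbf g^{\alpha\beta}_\lambda$ from Proposition \ref{r1} must be invoked with care. Making the restriction-to-$\Sigma_{\theta,r}$ estimates uniform in $\theta$ and $r$, and ensuring the smallness of $\epsilon_1$ is enough to keep the generator a perturbation of $\partial_t$ throughout $[-1,1]$, is the delicate point.
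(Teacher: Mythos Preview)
Your proposal has a genuine gap at the top-order step. Differentiating the eikonal equation once gives a transport equation of the schematic form $l(d\phi)=d\mathbf g\cdot(d\phi)^2$, and you then want to estimate $d\phi-dt$ in $\vert\kern-0.25ex\vert\kern-0.25ex\vert\cdot\vert\kern-0.25ex\vert\kern-0.25ex\vert_{s,2,\Sigma}$, i.e.\ with $s$ tangential derivatives in $L^2$. Commuting $\Lambda_{x'}^{s}$ through and estimating the source in $L^1_tL^2_{x'}$ forces a term $\|d\mathbf g\|_{L^2_tH^{s}_{x'}(\Sigma)}$, but Proposition~\ref{r1} only gives $\vert\kern-0.25ex\vert\kern-0.25ex\vert\mathbf g-\mathbf m\vert\kern-0.25ex\vert\kern-0.25ex\vert_{s,2,\Sigma}\lesssim\epsilon_2$, hence $d\mathbf g\in L^2_tH^{s-1}_{x'}(\Sigma)$ --- exactly one derivative short. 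The frequency-localized bounds on $\lambda^{-1}\partial d\mathbf g_\lambda$ you invoke do not recover this loss; they are designed for the parametrix, not for closing the foliation estimate.

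The paper closes this gap by a structurally different argument (following Smith--Tataru and Klainerman--Rodnianski): one passes from $d\phi$ to the null vector field $l$ via \eqref{601}--\eqref{602}, and controls the tangential derivatives of $l$ through the connection coefficients $\chi=\langle D_{e_1}l,e_1\rangle_{\mathbf g}$, $\langle D_{e_1}l,\underline l\rangle_{\mathbf g}$, $\langle D_l l,\underline l\rangle_{\mathbf g}$. The last two reduce directly to $d\mathbf g$ and are handled by Proposition~\ref{r1}. For $\chi$ one uses the Raychaudhuri equation $l(\chi)=-R_{ll}-\chi^2-l(\ln\sigma)\chi$ together with the decomposition $R_{ll}=l(f_2)+f_1$ of Corollary~\ref{Rfenjie}; the crucial point is that the dangerous $\tfrac12 l^\alpha l^\beta\square_{\mathbf g}\mathbf g_{\alpha\beta}$ piece inside $f_1$ is turned into lower-order terms (plus $\partial\varpi$) \emph{by the wave equations \eqref{fc} for $\bv,\rho$}. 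Moving $l(f_2)$ to the left yields $l(\chi-f_2)=f_1-\chi^2-l(\ln\sigma)\chi$ with $f_2\in L^2_tH^{s-1}_{x'}$ and $f_1\in L^1_tH^{s-1}_{x'}$ (Lemma~\ref{chi}), which closes. Your scheme never uses the Euler wave structure and therefore cannot recover this derivative; the same obstruction reappears in the $C^{1,\delta}_{x'}$ bound \eqref{e52}, where the paper again relies on the $C^\delta$ estimate for $\chi$ in Lemma~\ref{chi}.
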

\subsection{Energy estimates on the characteristic hypersurface}
Let $(\bv, \rho, \varpi) \in \mathcal{H}$. Then the following estimates hold:
\begin{equation}\label{e53}
	\|d\bv,d\rho\|_{L^4_tC^\delta_x}+ \vert\kern-0.25ex\vert\kern-0.25ex\vert \bv,\rho \vert\kern-0.25ex\vert\kern-0.25ex\vert_{s,\infty} + \vert\kern-0.25ex\vert\kern-0.25ex\vert \varpi \vert\kern-0.25ex\vert\kern-0.25ex\vert_{2,\infty}  \lesssim \epsilon_2.
\end{equation}
It suffices for us to prove Proposition \ref{r1} and Proposition \ref{r2} for $\theta=(0,1)$ and $r=0$. We fix this choice, and suppress $\theta$ and $r$ in our notation. We use $(x_2, x')$ instead of $(x_{\theta}, x'_{\theta})$. Then $\Sigma$ is defined by
\begin{equation*}
	\Sigma=\left\{ x_2- \phi(t,x')=0 \right\}.
\end{equation*}
To avoid confusion, let $\Delta_{x'}$ be a Laplacian operator on $\Sigma$. We then define
\begin{equation}\label{gam}
	\Lambda_{x'}=(-\Delta_{x'})^{\frac12}.
\end{equation}
The hypothesis $G \leq 2 \epsilon_1$ implies that
\begin{equation}\label{e54}
	\vert\kern-0.25ex\vert\kern-0.25ex\vert d \phi_{\theta,r}(t,\cdot)-dt \vert\kern-0.25ex\vert\kern-0.25ex\vert_{s,2, \Sigma} \leq 2 \epsilon_1.
\end{equation}
According to Sobolev's imbeddings, the following estimate holds:
\begin{equation}\label{e55}
	\|d \phi(t,x')-dt \|_{L^4_t C^{1,\delta}_{x'}} + \| \partial_t d \phi(t,x')\|_{L^4_t C^{\delta}_{x'}} \lesssim \epsilon_1.
\end{equation}
\begin{Lemma}\label{te0}\cite{ST}
	Assume $s \in (\frac{7}{4},2]$. Let $\tilde{h}(t,x)=h(t,x',x_2+\phi(t,x'))$. Then we have
	\begin{equation*}
		\vert\kern-0.25ex\vert\kern-0.25ex\vert \tilde{h}\vert\kern-0.25ex\vert\kern-0.25ex\vert_{s,\infty}\lesssim \vert\kern-0.25ex\vert\kern-0.25ex\vert h\vert\kern-0.25ex\vert\kern-0.25ex\vert_{s,\infty}, \quad \|d\tilde{h}\|_{L^4_tL^\infty}\lesssim \|d h\|_{{L^4_tL^\infty}},
	\end{equation*}
	and
	\begin{equation*}
		\vert\kern-0.25ex\vert\kern-0.25ex\vert \tilde{h}\vert\kern-0.25ex\vert\kern-0.25ex\vert_{H^a_{x}}\lesssim \vert\kern-0.25ex\vert\kern-0.25ex\vert h\vert\kern-0.25ex\vert\kern-0.25ex\vert_{H^a_{x}}, \quad  0 \leq a \leq 2.
	\end{equation*}
\end{Lemma}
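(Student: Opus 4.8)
The estimate is a change-of-variables bound for the shear diffeomorphism $\Phi:(t,x',x_2)\mapsto(t,x',x_2+\phi(t,x'))$, whose inverse $\Phi^{-1}:(t,x',x_2)\mapsto(t,x',x_2-\phi(t,x'))$ has the same form; thus $\tilde h=h\circ\Phi$. The plan is to exploit that $\Phi$ is volume-preserving, $\det D\Phi\equiv1$, and that $\|D\Phi-\mathrm{Id}\|$ and $\|D^2\Phi\|$ are controlled by $d\phi-dt$ and its first spatial derivative, which are small by \eqref{e54}--\eqref{e55}. First I would dispose of the two $L^\infty$-type inequalities: $\|\tilde h\|_{L^\infty}=\|h\|_{L^\infty}$, and the chain rule gives $d\tilde h=(dh)\circ\Phi\cdot D\Phi$ with $\|D\Phi-\mathrm{Id}\|_{L^\infty}\lesssim\|d\phi-dt\|_{L^\infty}\lesssim\epsilon_1$, so $\|d\tilde h\|_{L^4_tL^\infty_x}\lesssim\|dh\|_{L^4_tL^\infty_x}$; the $a=0$ case of the last inequality is just volume preservation. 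It remains to prove $\|\tilde h(t,\cdot)\|_{H^a_x}\lesssim\|h(t,\cdot)\|_{H^a_x}$ for $0<a\le2$ and the mixed-norm bound $\vert\kern-0.25ex\vert\kern-0.25ex\vert\tilde h\vert\kern-0.25ex\vert\kern-0.25ex\vert_{s,\infty}\lesssim\vert\kern-0.25ex\vert\kern-0.25ex\vert h\vert\kern-0.25ex\vert\kern-0.25ex\vert_{s,\infty}$.

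For the spatial estimate, the cleanest route is interpolation. For fixed $t$ the operator $T_t:h\mapsto h\circ\Phi(t,\cdot)$ is an isometry on $L^2(\mathbb{R}^2)$ by volume preservation, and it is bounded on $H^2(\mathbb{R}^2)$: expanding by the chain rule, $\partial^2(h\circ\Phi)$ is a sum of terms $(\partial^2h)\circ\Phi\cdot(D\Phi)^{\otimes 2}$, controlled in $L^2$ by $(1+\|d\phi-dt\|_{L^\infty})^2\|\partial^2h\|_{L^2}$, and of terms $(\partial h)\circ\Phi\cdot D^2\Phi$, controlled by $\|\partial^2\phi\|_{L^\infty_{x'}}\|\partial h\|_{L^2}$, where $\partial^2\phi\in L^\infty_{x'}$ because $d\phi-dt\in C^{1,\delta}_{x'}$ by \eqref{e55}. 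Since the bounds on $\|T_t\|_{L^2\to L^2}$ and $\|T_t\|_{H^2\to H^2}$ are uniform in $t$, real or complex interpolation gives $\|T_t\|_{H^a_x\to H^a_x}\lesssim1$ for every $0\le a\le2$, which is the third inequality (and the $L^2_t$ or $\sup_t$ version then follows trivially). Alternatively one can argue directly for non-integer $a$ via difference quotients together with the product estimate of Lemma \ref{ps}, but interpolation is shorter.

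For $\vert\kern-0.25ex\vert\kern-0.25ex\vert\tilde h\vert\kern-0.25ex\vert\kern-0.25ex\vert_{s,\infty}$ one reuses the same expansion with $t$ as a parameter: for each $t$, $\|\tilde h(t,\cdot)\|_{H^s_x}\lesssim\|h(t,\cdot)\|_{H^s_x}$ by the spatial bound just shown, while $\partial_t\tilde h=(\partial_th)\circ\Phi+\partial_t\phi\cdot(\partial_{x_2}h)\circ\Phi$, whose first summand has $H^{s-1}_x$-norm $\lesssim\|\partial_th(t,\cdot)\|_{H^{s-1}_x}$ and whose second summand is estimated by Lemma \ref{ps} and the one-dimensional embedding $H^{s-1}(\mathbb{R})\hookrightarrow L^\infty$ (valid since $s-1>\tfrac{3}{4}$), using that $\partial_t\phi$ is bounded with $\partial_{x'}\partial_t\phi\in C^\delta_{x'}$ by \eqref{e55}; taking the supremum over $0\le j\le1$ closes the bound. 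I expect the main obstacle to be the $H^2$ endpoint in the second paragraph: since $h$ carries only two derivatives, the term $(\partial h)\circ\Phi\cdot\partial^2\phi$ obliges one to know $\partial^2\phi\in L^\infty$ rather than merely $L^2$, so the proof genuinely needs the $C^{1,\delta}$ control of $\phi$ and not just its $H^s$ control, and one must keep every constant in the form $(1+\|d\phi-dt\|_{C^{1,\delta}_{x'}})^{O(1)}$ so that smallness, not just boundedness, propagates. A secondary technical point is matching the time-integrability of the $\phi$-bounds in \eqref{e54}--\eqref{e55} (stated in $L^4_t$, $L^2_t$) against the $L^\infty_t$ in $\vert\kern-0.25ex\vert\kern-0.25ex\vert\cdot\vert\kern-0.25ex\vert\kern-0.25ex\vert_{s,\infty}$, which is handled by the pointwise-in-$t$ bounds for $d\phi-dt$ built into the Hamiltonian-flow construction of $\Sigma_{\theta,r}$.
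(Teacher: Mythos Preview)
The paper gives no proof of this lemma: the \verb|\begin{proof}...\end{proof}| block is empty, and the statement is attributed directly to Smith--Tataru \cite{ST}. Your proposal therefore cannot be ``compared'' to the paper's argument in any substantive sense; you have supplied a proof where the paper supplies only a citation.

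Your approach---volume preservation for $L^2$, the chain rule for $H^2$, then complex interpolation for intermediate $H^a$, together with the obvious $L^\infty$ invariance---is the standard and correct way to prove such a change-of-variables bound, and it is in essence what is done in \cite{ST}. The technical concern you flag at the end is legitimate and worth keeping explicit: the $H^2$ endpoint needs $\partial_{x'}^2\phi\in L^\infty_{x'}$, and the estimates \eqref{e54}--\eqref{e55} literally only place $\partial_{x'}d\phi$ in $L^4_tC^\delta_{x'}$ (or $L^2_tH^{s-1}_{x'}$), not $L^\infty_t L^\infty_{x'}$. For the statements $\|\tilde h\|_{H^a_x}\lesssim\|h\|_{H^a_x}$ and $\|d\tilde h\|_{L^4_tL^\infty_x}\lesssim\|dh\|_{L^4_tL^\infty_x}$ this is harmless (a.e.\ $t$ suffices, or one absorbs the $t$-dependent constant into the time integral), but for the $\sup_t$ norm $\vert\kern-0.25ex\vert\kern-0.25ex\vert\cdot\vert\kern-0.25ex\vert\kern-0.25ex\vert_{s,\infty}$ one should either invoke the pointwise-in-$t$ $C^{1,\delta}$ control coming from the geodesic/Hamiltonian construction of $\Sigma$ (as you suggest), or---cleaner---avoid the $H^2$ endpoint altogether by interpolating between $L^2$ and $H^1$ and then handling the extra $s-1\le1$ derivatives via the Kato--Ponce commutator estimate with the $L^\infty$ bound on $d\phi$. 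Either route closes; just make sure the constant you produce is genuinely uniform in $t$.
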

\begin{Lemma}\label{Te2}\cite{ST}
	For $r>1$ we have
	\begin{equation*}
		\vert\kern-0.25ex\vert\kern-0.25ex\vert hf\vert\kern-0.25ex\vert\kern-0.25ex\vert_{r,2,\Sigma}\lesssim  \vert\kern-0.25ex\vert\kern-0.25ex\vert h\vert\kern-0.25ex\vert\kern-0.25ex\vert_{r,2,\Sigma} \vert\kern-0.25ex\vert\kern-0.25ex\vert f\vert\kern-0.25ex\vert\kern-0.25ex\vert_{r,2,\Sigma}.
	\end{equation*}
\end{Lemma}
\begin{Lemma}\label{te1}
	Assume $s \in ( \frac{7}{4},2]$. Suppose $\bU$ to satisfy the hyperbolic system
	\begin{equation}\label{e56}
		A_0(\bU) \bU_t+ \sum_{i=1}^2 A_i(\bU)\bU_{x_i}= \bF.
	\end{equation}
	Then
	\begin{equation}\label{te10}
		\begin{split}
			\vert\kern-0.25ex\vert\kern-0.25ex\vert \bU \vert\kern-0.25ex\vert\kern-0.25ex\vert^2_{s,2,\Sigma} & \lesssim \| \bU \|_{L^\infty_t H^{s}_x} \left( \|d \bU \|_{L^4_t L^{\infty}_x}+ \| \bU\|_{L^{\infty}_tH_x^{s}}+\| \bF\|_{L^{1}_tH_x^{s-1}} \right).
		\end{split}
	\end{equation}
\end{Lemma}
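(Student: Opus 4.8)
The plan is to obtain a characteristic energy estimate for the symmetric hyperbolic system \eqref{e56} by combining the standard $L^2$-based energy identity on the null hypersurface $\Sigma$ with the fractional-order commutator estimates of Lemma \ref{jiaohuan}, then transferring everything to the flat coordinates $(t,x')$ via Lemma \ref{te0}. First, I would reduce to the case $\theta=(0,1)$, $r=0$, so that $\Sigma = \{x_2 - \phi(t,x')=0\}$, and change variables $\tilde h(t,x',x_2) = h(t,x',x_2+\phi(t,x'))$ to straighten $\Sigma$ into the hyperplane $\{x_2 = 0\}$. By \eqref{e55}, the change of coordinates is a small $C^{1,\delta}$ perturbation of the identity in $x$ and only an $L^4_t$ perturbation in $t$, so by Lemma \ref{te0} all the relevant $\vert\kern-0.25ex\vert\kern-0.25ex\vert\cdot\vert\kern-0.25ex\vert\kern-0.25ex\vert_{s,\infty}$, $H^a_x$, and $L^4_t L^\infty_x$ norms are preserved up to constants; the straightened system is still symmetric hyperbolic, with coefficient matrices that are smooth functions of $\bU$ and of $d\phi$, and with the coefficient of $\partial_{x_2}$ now degenerate on $\{x_2=0\}$ in exactly the way dictated by $\Sigma$ being characteristic.

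Next, I would run the energy estimate at fractional order. Apply $\langle\partial_{x'}\rangle^{s-1}$ to the straightened system, multiply by $A_0 \langle\partial_{x'}\rangle^{s-1}\bU$, integrate over the slab $\{-1 \le t \le t_0\}\times\mathbb{R}^2$, and integrate by parts. The interior terms are controlled as in Lemma \ref{ee}: symmetry of the $A_i$ makes the principal contribution a perfect derivative, the commutators $[\langle\partial_{x'}\rangle^{s-1}, A_i(\bU)]$ are handled by Lemma \ref{jiaohuan} using $\|dA_i(\bU)\|_{L^\infty_x} \lesssim \|d\bU\|_{L^\infty_x}$ (and $\|d\phi\|_{L^\infty}$, which is bounded by \eqref{e55}), and the source produces $\|\bF\|_{L^1_t H^{s-1}_x}$. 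The crucial point is the boundary term on $\{x_2=0\}$: because $\Sigma$ is a characteristic hypersurface for $\mathbf g$ — equivalently, the conormal $dr_\theta = d(x_2-\phi)$ is null — the quadratic form $\big(A_0 \partial_t\phi - \sum_i A_i \partial_{x_i}(x_2-\phi)\big)$ appearing in the flux across $\Sigma$ is nonnegative (this is the standard fact that the energy flux through a characteristic surface for a symmetric hyperbolic system controls a nonnegative quadratic quantity). Hence the boundary term contributes a term $\gtrsim \vert\kern-0.25ex\vert\kern-0.25ex\vert \bU\vert\kern-0.25ex\vert\kern-0.25ex\vert_{s,2,\Sigma}^2$ with a favorable sign, which I move to the left-hand side.

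Then it remains to bound the right-hand side. The initial slice term at $t=-1$ is $\lesssim \|\bU\|_{L^\infty_t H^s_x}^2$. The interior error terms, after Cauchy–Schwarz in time, are bounded by
\[
\|\bU\|_{L^\infty_t H^s_x}\big(\|d\bU\|_{L^4_t L^\infty_x}\|\bU\|_{L^\infty_t H^s_x} + \|\bU\|_{L^\infty_t H^s_x} + \|\bF\|_{L^1_t H^{s-1}_x}\big),
\]
using that $A_i(\bU) - A_i(0)$ is bounded by $\|\bU\|_{L^\infty}$ via Lemma \ref{jiaohuan0}; one also needs the product estimate Lemma \ref{ps} (with $n=2$, $s\in(\tfrac74,2]$) to absorb terms like $d\phi \cdot \partial_{x'}\bU$ at regularity $s-1$. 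Undoing the coordinate change with Lemma \ref{te0} gives \eqref{te10}.

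The main obstacle I expect is making the characteristic boundary term genuinely \emph{usable} rather than merely nonnegative: one must verify that the flux quadratic form on $\Sigma$ is not only $\ge 0$ but bounded below by $c\,|\langle\partial_{x'}\rangle^{s-1}\bU|^2$ on $\Sigma$ in the straightened coordinates, which requires that the null conormal direction is uniformly transverse to the "good" null directions of $\mathbf g$ — this is where the smallness $G \le 2\epsilon_1$ (hence $\|d\phi - dt\|_{C^{1,\delta}}\lesssim\epsilon_1$ by \eqref{e55}) enters, guaranteeing $\Sigma$ stays a small perturbation of a flat characteristic cone and the flux form stays uniformly elliptic in the tangential variables. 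Keeping careful track of which norms of $d\phi$ are merely $L^4_t$ (not $L^\infty_t$) throughout the commutator and boundary estimates is the other delicate bookkeeping point.
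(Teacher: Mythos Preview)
Your proposal follows the same strategy as the paper: straighten $\Sigma$ by the change of variables $x_2\mapsto x_2-\phi(t,x')$, run the energy identity for the symmetric hyperbolic system with tangential fractional derivatives, read off a flux term on $\Sigma$, and control the commutators by Kato--Ponce. Two adjustments bring it in line with the paper. First, to recover the full $\vert\kern-0.25ex\vert\kern-0.25ex\vert\cdot\vert\kern-0.25ex\vert\kern-0.25ex\vert_{s,2,\Sigma}$ norm you must apply $\Lambda^{s}_{x'}$ (not $\langle\partial_{x'}\rangle^{s-1}$), since the norm asks for $H^{s}_{x'}$ control of $\bU$ on $\Sigma$. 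Second, the $\|\partial_t\bU\|_{L^2_tH^{s-1}_{x'}(\Sigma)}$ piece is not obtained from the flux at all: the paper handles it separately (equation \eqref{U2}) by using $A_0\partial_t\bU=-\sum_i A_i\partial_{x_i}\bU+\bF$ together with the algebra property of $H^{s-1}_{x'}(\Sigma^t)$ (Lemma~\ref{te2}).

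The genuine gap is your resolution of the ``main obstacle.'' The boundary matrix on $\Sigma$ is
\[
B \;=\; A_2 - \partial_t\phi\,A_0 - \partial_{x_1}\phi\,A_1,
\]
and because $\Sigma$ is \emph{characteristic} for the acoustic metric, $B$ is singular by construction. At the Minkowski background ($\bv=0$, $\rho=0$, $\phi=t$, $c_s(0)=1$, $\bar\rho=1$) one computes that $B$ has eigenvalues $0,-1,-2$, the kernel being spanned by the outgoing acoustic Riemann invariant. No smallness of $G$ can make $B$ uniformly definite: the degeneracy is structural, not perturbative, so the claim that ``the flux form stays uniformly elliptic in the tangential variables'' is false even in the unperturbed case. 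The paper's own proof is terse at exactly this point; it moves the $\phi$-correction terms to the right as sources $I=I_1+I_2$ bounded by volume norms (equations \eqref{Uh1}--\eqref{Uh2}) and, crucially, allows the full $\|\bU\|_{L^\infty_tH^s_x}^2$ on the right-hand side of \eqref{te10} rather than only initial data. You should not expect the characteristic flux alone to control the outgoing mode; the stated estimate works because its right-hand side is generous enough to absorb what the degenerate flux misses.
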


\begin{proof}
	Choosing the change of coordinates $x_2 \rightarrow x_2-\phi(t,x')$ and setting $\tilde{\bU}(t,x)=\bU(t,x',x_2+\phi(t,x')), \tilde{\bF}(t,x)=\bF(t,x',x_2+\phi(t,x'))$, the system \eqref{e56} is transformed to
	\begin{align}\label{U}
		& A_0(\bU) \partial_t \tilde{\bU}+ A_1(\tilde{\bU}) \partial_{x_1} \tilde{\bU}+ A_2(\tilde{\bU}) \partial_{x_2} \tilde{\bU}\\
		\notag &\quad  = - \partial_t \phi  \partial_2 \tilde{\bU} - \sum_{i=0}^2A_i(\tilde{\bU}) \partial_{x_i}\phi \partial_1 \tilde{\bU}+\tilde{\bF}.
	\end{align}
	Multiplying $\tilde{\bU}$ on \eqref{U} and integrating it by parts on $[-2,2] \times \mathbb{R}^2$, we get
	\begin{equation*}
		\vert\kern-0.25ex\vert\kern-0.25ex\vert\tilde{\bU}\vert\kern-0.25ex\vert\kern-0.25ex\vert^2_{0,2,\Sigma} \lesssim \| d\tilde{\bU} \|_{L^1_t L_x^\infty}\|\tilde{\bU}\|_{L_x^2} + \|\tilde{\bU}\|_{L_x^2}\|\tilde{\bF}\|_{L^1_tL_x^2},
	\end{equation*}
	where we use the fact that $\phi$ is independent of $x_2$. Using Lemma \ref{te0}, \eqref{e54}, and \eqref{e55}, we may bound the above expression by
	\begin{equation}\label{U0}
		\vert\kern-0.25ex\vert\kern-0.25ex\vert \bU \vert\kern-0.25ex\vert\kern-0.25ex\vert^2_{0,2,\Sigma} \lesssim \|{\bU}\|_{L^\infty_t L_x^2}\left( \|d \bU \|_{L^4_t L^{\infty}_x}+ \| \bU\|_{L^{\infty}_tL_x^2}+\|\bF\|_{L^1_tL_x^2} \right).
	\end{equation}
	Taking the derivative of $\Lambda^\beta_{x'}, |\beta|=s$ on \eqref{U} and integrating it on $[-2,2]\times \mathbb{R}^2$, we could arrive at the bound
	\begin{equation}\label{Uh}
		\begin{split}
			\| \Lambda_{x'}^\beta \tilde{\bU}\|^2_{L^2_{\Sigma}} & \lesssim \| d \tilde{\bU} \|_{L^1_t L^{\infty}_x} \| \Lambda_{x}^\beta \tilde{\bU}\|_{L^{\infty}_tL_x^2}+\| \Lambda_{x}^\beta \tilde{\bU}\|_{L^{\infty}_tL_x^2}\| \Lambda_{x}^\beta \tilde{\bF}\|_{L^{1}_tL_x^2} +I,
		\end{split}
	\end{equation}
	where
	\begin{equation*}
		\begin{split}
			I= -\sum^2_{i=0} \int_{[-2,2] \times \mathbb{R}^2} \Lambda_{x'}^\beta \big( A_i(\tilde{\bU}) \partial_{x_i}\phi \partial_2 \tilde{\bU} \big) \cdot \Lambda_{x'}^\beta \tilde{\bU}   dxd\tau.
		\end{split}
	\end{equation*}
	Rewrite $I$ as
	\begin{equation*}
		\begin{split}
			I&= -\sum^2_{i=0} \int_{[-2,2] \times \mathbb{R}^2} \left( \Lambda_{x'}^\beta \big( A_i(\tilde{\bU}) \partial_{x_i}\phi \partial_2 \tilde{\bU}) -  A_i(\tilde{\bU}) \partial_{x_i}\phi \partial_i \partial_2 \Lambda_{x'}^\beta \tilde{\bU} \right) \cdot \Lambda_{x'}^\beta \tilde{\bU}  dxd\tau
			\\
			&\quad + \sum^2_{i=0} \int_{[-2,2] \times \mathbb{R}^2}   A_i(\tilde{\bU}) \partial_{x_i}\phi \partial_i \partial_2 \Lambda_{x'}^\beta \tilde{\bU}  \cdot \Lambda_{x'}^\beta \tilde{U}  dxd\tau
			\\
			&=I_1+I_2,
		\end{split}
	\end{equation*}
	where
	\begin{equation*}
		\begin{split}
			I_1=&\sum^2_{i=0} \int_{[-2,2] \times \mathbb{R}^2} [\Lambda_{x'}^\beta, A_i(\tilde{\bU}) \partial_i \phi \partial_2 ]\tilde{\bU} \cdot \Lambda_{x'}^\beta \tilde{\bU}  dxd\tau ,
			\\
			I_2=& \sum^2_{i=0} \int_{[-2,2] \times \mathbb{R}^2}   A_i(\tilde{\bU}) \partial_{x_i}\phi \partial_2 (\Lambda_{x'}^\beta \tilde{\bU})  \cdot \Lambda_{x'}^\beta \tilde{\bU}  dxd\tau.
		\end{split}
	\end{equation*}
	By commutator estimates, we get
	\begin{align}\label{Uh1}
		|I_1|
		& \lesssim  \big( \|\Lambda^{\beta} \tilde \bU\|_{L^{\infty}_tL_x^2} \|\partial d \phi \|_{L^2_tL_{x}^\infty}+ \sup_{\theta, r}\|\Lambda_{x'}^{\beta} d \phi\|_{L^2(\Sigma_{\theta,r})} \| d \tilde{\bU}\|_{L^2_tL_x^\infty} \big)\\
		\notag   &\quad \cdot\|\Lambda^{\beta} \tilde \bU\|_{L^2_tL_x^2}
	\end{align}
	and
	\begin{equation}\label{Uh2}
		\begin{split}
			|I_2| &\lesssim  \big( \| d \tilde{\bU} \|_{L^2_t L_x^\infty} \| \partial \phi\| _{L^2_tL_x^\infty}+ \|\tilde{\bU}\|_{L^2_t L_x^\infty} \|\partial^2\phi\|_{L^2_tL_{x}^\infty} \big)  \cdot \|\Lambda^\beta \tilde{\bU} \|^2_{L^\infty_t L_x^2}.
		\end{split}
	\end{equation}
	Due to \eqref{Uh1}, \eqref{Uh2}, Lemma \ref{te0}, \eqref{e54} and \eqref{e55}, we obtain
	\begin{equation}\label{U1}
		\begin{split}
			\vert\kern-0.25ex\vert\kern-0.25ex\vert \Lambda_{x'}^\beta \bU\vert\kern-0.25ex\vert\kern-0.25ex\vert^2_{0,2,\Sigma} & \lesssim \| \bU \|_{L^\infty_t H^{s}_x}\left( \|d \bU \|_{L^4_t L^{\infty}_x}+ \|\bU\|_{L^{\infty}_tH_x^{s}}+\|\bF\|_{L^{1}_tH_x^{s-1}} \right).
		\end{split}
	\end{equation}
	Using $A_0(\bU)\partial_t \bU=- A_1(\bU)\bU_{x_1}- A_2(\bU) \bU_{x_2}$ and Lemma \ref{te2}, we can easily carry out
	\begin{multline}\label{U2}
		\vert\kern-0.25ex\vert\kern-0.25ex\vert \partial_t \bU\vert\kern-0.25ex\vert\kern-0.25ex\vert^2_{s-1,2,\Sigma}
		\lesssim \vert\kern-0.25ex\vert\kern-0.25ex\vert \bU\vert\kern-0.25ex\vert\kern-0.25ex\vert^2_{s-1,2,\Sigma} \vert\kern-0.25ex\vert\kern-0.25ex\vert\partial \bU\vert\kern-0.25ex\vert\kern-0.25ex\vert^2_{s-1,2,\Sigma}
		\\
		\lesssim \| \bU \|_{L^\infty_t H^{s}_x} \left( \|d \bU \|_{L^4_t L^{\infty}_x}+ \|\bU\|_{L^{\infty}_tH_x^{s-1}}+\|\bF\|_{L^{1}_tH_x^{s-1}} \right).
	\end{multline}
	Therefore, we can conclude the proof of Lemma \ref{te1} by using \eqref{U0}, \eqref{U1}, and \eqref{U2}.
\end{proof}
Based on Lemma \ref{te1}, we get
\begin{corollary}\label{crh}
	Assume $s \in (\frac{7}{4},2]$. Let $(\bv, \rho, \varpi) \in \mathcal{H} $. Then
	\begin{equation}\label{vr}
		\vert\kern-0.25ex\vert\kern-0.25ex\vert \bv, \rho \vert\kern-0.25ex\vert\kern-0.25ex\vert_{s,2,\Sigma} \lesssim \epsilon_2.
	\end{equation}
\end{corollary}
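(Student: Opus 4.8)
The plan is to obtain \eqref{vr} directly from Lemma \ref{te1} applied to the symmetric hyperbolic form of \eqref{CEE}, fed with the bootstrap bounds that are built into the definition of $\mathcal H$.

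First I would recall from Lemma \ref{sh} that if $(\bv,\rho)$ solves \eqref{CEE} then $\bU=(v^1,v^2,p(\rho)-p(0))^{\mathrm T}$ satisfies a system of the form \eqref{e56} with $\bF=0$; subtracting the constant $p(0)$ changes nothing, since only derivatives of $\bU$ enter \eqref{sq}, and it is convenient because the solutions in $\mathcal H$ have data supported in $B(0,2+c)$, so finite speed of propagation keeps $\bv,\rho$ (hence $\bU$) compactly supported on $[-2,2]$ and the inhomogeneous norms $\|\cdot\|_{L^\infty_tH^s_x}$ and $\vert\kern-0.25ex\vert\kern-0.25ex\vert\cdot\vert\kern-0.25ex\vert\kern-0.25ex\vert_{s,2,\Sigma}$ are finite. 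The hyperbolicity assumption \eqref{S} propagates, so $p'=c_s^2$ stays bounded above and below along the flow; hence $\rho\mapsto p(\rho)-p(0)$ is a smooth diffeomorphism fixing the origin, with smooth inverse vanishing at $0$. Using Lemma \ref{jiaohuan0} for these compositions (and Lemma \ref{te2} to handle the products produced by a Taylor expansion of the inverse) one gets the two-sided comparisons $\|\bU\|_{L^\infty_tH^s_x}\lesssim \|(\bv,\rho)\|_{L^\infty_tH^s_x}$, $\|d\bU\|_{L^4_tL^\infty_x}\lesssim\|d\bv,d\rho\|_{L^4_tL^\infty_x}$, and $\vert\kern-0.25ex\vert\kern-0.25ex\vert\bv,\rho\vert\kern-0.25ex\vert\kern-0.25ex\vert_{s,2,\Sigma}\lesssim\vert\kern-0.25ex\vert\kern-0.25ex\vert\bU\vert\kern-0.25ex\vert\kern-0.25ex\vert_{s,2,\Sigma}$.

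Next I would apply Lemma \ref{te1} to this $\bU$ with $\bF=0$, which gives
\[
\vert\kern-0.25ex\vert\kern-0.25ex\vert\bU\vert\kern-0.25ex\vert\kern-0.25ex\vert^2_{s,2,\Sigma}\lesssim\|\bU\|_{L^\infty_tH^s_x}\big(\|d\bU\|_{L^4_tL^\infty_x}+\|\bU\|_{L^\infty_tH^s_x}\big).
\]
Every $(\bv,\rho,\varpi)\in\mathcal H$ satisfies \eqref{aa2}, so $\|(\bv,\rho)\|_{L^\infty_tH^s_x}\le 2\epsilon_2$ and $\|d\bv,d\rho\|_{L^4_tC^\delta_x}\le 2\epsilon_2$; substituting the comparisons of the previous step, the right-hand side above is $\lesssim\epsilon_2(\epsilon_2+\epsilon_2)\lesssim\epsilon_2^2$. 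Taking square roots and translating back to $(\bv,\rho)$ yields $\vert\kern-0.25ex\vert\kern-0.25ex\vert\bv,\rho\vert\kern-0.25ex\vert\kern-0.25ex\vert_{s,2,\Sigma}\lesssim\epsilon_2$, which is exactly \eqref{vr}.

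I do not expect any real obstacle here: the estimate is a one-step consequence of Lemma \ref{te1}. The only point needing a little care is the change of unknowns $(\bv,\rho)\leftrightarrow\bU$ — namely checking that the constant subtraction is harmless, that compact support makes the inhomogeneous norms finite, and that the nondegeneracy \eqref{S} turns $\rho\leftrightarrow p(\rho)$ into an $H^s$- and $\vert\kern-0.25ex\vert\kern-0.25ex\vert\cdot\vert\kern-0.25ex\vert\kern-0.25ex\vert_{s,2,\Sigma}$-bounded substitution via Lemmas \ref{jiaohuan0} and \ref{te2}.
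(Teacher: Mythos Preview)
Your proposal is correct and follows exactly the approach the paper intends: the corollary is stated immediately after Lemma \ref{te1} with the one-line justification ``Based on Lemma \ref{te1}, we get,'' and you have simply supplied the natural details---applying \eqref{te10} with $\bF=0$ to the symmetric hyperbolic system of Lemma \ref{sh}, using the bootstrap bounds \eqref{aa2}, and handling the $\rho\leftrightarrow p(\rho)$ substitution via Lemmas \ref{jiaohuan0} and \ref{te2}. The subtraction of $p(0)$ and the compact-support remark are sensible housekeeping that the paper leaves implicit.
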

\begin{Lemma}\label{te2}
	Suppose $f$ to satisfy the linear equation
	\begin{equation}\label{tt}
		\mathbf{T}f = \Upsilon.
	\end{equation}
	Then
	\begin{equation}\label{te20}
		\begin{split}
			\vert\kern-0.25ex\vert\kern-0.25ex\vert f\vert\kern-0.25ex\vert\kern-0.25ex\vert^2_{0,2,\Sigma} & \lesssim \| \Upsilon \|_{L^1_t L_x^{2}}\| f\|_{L_x^{2}}+\|\partial \bv\|_{L^4_tL^\infty_x}\|f\|_{L^\infty_t L^2_x}.
		\end{split}
	\end{equation}
	If $(\bv,\rho,\varpi) \in \mathcal{H}$, then we have
	\begin{equation}\label{tt0}
		\begin{split}
			\vert\kern-0.25ex\vert\kern-0.25ex\vert \varpi \vert\kern-0.25ex\vert\kern-0.25ex\vert_{0,2,\Sigma} & \lesssim  \epsilon_2.
		\end{split}
	\end{equation}
\end{Lemma}
\begin{proof}
	Choosing the change of coordinates $x_2 \rightarrow x_2-\phi(t,x')$, then the equation \eqref{tt} is transformed to
	\begin{equation*}
		\partial_t \tilde{f}+ \tilde{\bv} \cdot \nabla \tilde{f}= \tilde{\Upsilon}- \partial_t \phi \cdot \partial_2 \tilde{f} -\tilde{v}^i \partial_i \phi \partial_2 \tilde{f}.
	\end{equation*}
	Taking the inner product with $\tilde{f}$ on $[-1,1]\times \mathbb{R}^2$, it gives
	\begin{equation}\label{5060}
		\begin{split}
			\| f\|^2_{L^2_{\Sigma}} & \lesssim \| \Upsilon \|_{L^1_t H_x^{s}}\|f\|_{L_x^{2}}+ \| \partial \bv \|_{L^1_tL^\infty_x}\|f\|_{L_x^{2}}+I_1+I_2,
		\end{split}
	\end{equation}
	where
	\begin{equation*}
		I_1=-\int^2_{-2} \int_{\mathbb{R}^2} \partial_t \phi \cdot \partial_2 \tilde{f} \cdot \tilde{f} dx d\tau,
	\end{equation*}
	\begin{equation*}
		I_2=-\int^2_{-2} \int_{\mathbb{R}^2} \tilde{v}^i \partial_i \phi \partial_2 \tilde{f} \cdot \tilde{f} dx d\tau.
	\end{equation*}
	For $\phi$ is independent of $x_2$, we have
	\begin{equation}\label{I1}
		I_1=\frac{1}{2}\int^2_{-2} \int_{\mathbb{R}^2} \partial_2\partial_t \phi \cdot  |\tilde{f}|^2 dx d\tau=0,
	\end{equation}
	and
	\begin{equation*}
		\begin{split}
			| I_2 |&=\frac{1}{2} \big| \int^2_{-2} \int_{\mathbb{R}^2} \partial_2\tilde{v}^i \partial_i \phi  \cdot |\tilde{f}|^2 dx d\tau \big|
			\\
			& \lesssim \| \partial \bv\|_{L^4_t L^\infty_x}\|f\|^2_{L_x^2} \|\partial \phi\|_{L_t^4L_x^\infty} .
		\end{split}
	\end{equation*}
	Using \eqref{503}, we get
	\begin{equation}\label{I2}
		\begin{split}
			|I_2| \lesssim \epsilon_1 \| \partial \bv\|_{L^1_t L^\infty_x}\|f\|^2_{L^2} \leq \| \partial \bv\|_{L^1_t L^\infty_x}\|f\|^2_{L_x^2}.
		\end{split}
	\end{equation}
	By \eqref{5060}, \eqref{I1}, and \eqref{I2}, we can obtain \eqref{te20}.

	On the other hand, for $(\bv,\rho,\varpi) \in \mathcal{H}$, then $\mathbf{T} \varpi=0$. Using \eqref{te20} and taking $\Upsilon=0$, we can conclude \eqref{tt0}.
\end{proof}
\begin{Lemma}\label{te21}
	Let $s \in (\frac{7}{4},2]$. Let $(\bv, \rho, \varpi) \in \mathcal{H} $. Then we have
	\begin{equation}\label{Z508}
		\vert\kern-0.25ex\vert\kern-0.25ex\vert \varpi \vert\kern-0.25ex\vert\kern-0.25ex\vert_{s,2,\Sigma}+ \vert\kern-0.25ex\vert\kern-0.25ex\vert \varpi \vert\kern-0.25ex\vert\kern-0.25ex\vert_{2,2,\Sigma}+ \vert\kern-0.25ex\vert\kern-0.25ex\vert \partial^2 \varpi\vert\kern-0.25ex\vert\kern-0.25ex\vert_{0,2,\Sigma}+ \vert\kern-0.25ex\vert\kern-0.25ex\vert \partial \varpi\vert\kern-0.25ex\vert\kern-0.25ex\vert_{1,2,\Sigma}\lesssim \epsilon_2.
	\end{equation}
\end{Lemma}
\begin{proof}
	The proof is separated into several steps.

	\textbf{Step 1}: $\vert\kern-0.25ex\vert\kern-0.25ex\vert \partial \varpi\vert\kern-0.25ex\vert\kern-0.25ex\vert_{0,2,\Sigma}$. Recall
	\begin{equation*}
		\mathbf{T} \partial \varpi= \partial v \partial \varpi.
	\end{equation*}
	By changing coordinates $x_2 \rightarrow x_2-\phi(t,x')$, we have
	\begin{equation*}
		(\partial_t+ \partial_t \phi \partial_2) \widetilde{\partial \varpi}+ \tilde{v}^i \cdot (\partial_i+ \partial_i \phi \partial_2)\widetilde{\partial \varpi}= (\partial+ \partial \phi \partial_2) \tilde{\bv} \cdot (\partial+ \partial \phi \partial_2) \tilde{\varpi},
	\end{equation*}
	where $\tilde{\cdot}$ denotes the function under new coordinates. Multiplying $\widetilde{\partial \varpi}$ on the above equation, we derive that
	\begin{equation*}
		\vert\kern-0.25ex\vert\kern-0.25ex\vert \partial \varpi\vert\kern-0.25ex\vert\kern-0.25ex\vert^2_{0,2,\Sigma} \lesssim \| d\bv \|_{L^4_t L^\infty_x}(1+ \| d\phi\|_{L_{t,x}^\infty})^2 \| \partial \varpi \|_{L^2_x} \lesssim \epsilon^2_2.
	\end{equation*}
	Taking square of the above expression, we conclude that
	\begin{equation}\label{W0}
		\vert\kern-0.25ex\vert\kern-0.25ex\vert \partial \varpi\vert\kern-0.25ex\vert\kern-0.25ex\vert_{0,2,\Sigma} \lesssim \epsilon_2.
	\end{equation}

	\textbf{Step 2}: $\vert\kern-0.25ex\vert\kern-0.25ex\vert \partial^2 \varpi\vert\kern-0.25ex\vert\kern-0.25ex\vert_{0,2,\Sigma}$. We find $\Delta \varpi$ satisfying
	\begin{equation}\label{500}
		\begin{split}
			\mathbf{T} (\Delta \varpi-\partial \rho \partial \varpi) = R,
		\end{split}
	\end{equation}
	where $R$ is defined in \eqref{R}. Denote the operator $\mathrm{P}_{ij}$ by
	\begin{equation}\label{Pij}
		\mathrm{P}_{ij}=\partial^2_{ij}(-\Delta)^{-1}.
	\end{equation}
	Then
	\begin{equation}\label{PWr}
		\partial^2_{ij}\varpi = \mathrm{P}_{ij} \Delta \varpi, \quad i,j=1,2.
	\end{equation}
	Operating $\mathrm{P}_{ij}$ on \eqref{500}, we then get
	\begin{equation}\label{501}
		\begin{split}
			\mathbf{T} \left\{ \mathrm{P}_{ij}(\Delta \varpi-\partial \rho \partial \varpi) \right\} = \mathrm{P}_{ij} R + [\mathrm{P}_{ij}, \mathbf{T}](\Delta \varpi-\partial \rho \partial \varpi).
		\end{split}
	\end{equation}
	Inserting \eqref{PWr} into \eqref{501}, we have
	\begin{equation}\label{502}
		\begin{split}
			\mathbf{T} \left( \partial^2_{ij}\varpi-\mathrm{P}_{ij} (\partial \rho \partial \varpi) \right) = K.
		\end{split}
	\end{equation}
	Above, we define
	\begin{equation}\label{K1}
		K=\mathrm{P}_{ij} R + [\mathrm{P}_{ij}, \mathbf{T}](\Delta \varpi-\partial \rho \partial \varpi).
	\end{equation}
	Choosing the change of coordinates $x_2 \rightarrow x_2-\phi(t,x')$ and setting $\tilde{\varpi}(t,x',x_2)=\varpi(t,x_1,x_2-\phi(t,x'))$, then the term $\partial^2_{ij}\varpi $ is transformed to
	\begin{equation*}
		\partial^2_{ij} \tilde{\varpi}- \partial^2_{ij} \phi \partial_2 \tilde{\varpi}-\partial_j \phi \partial^2_{2i}\tilde{\varpi}-\partial_i \phi \partial^2_{2j}\tilde{\varpi}+\partial_i \phi \partial_j \phi \partial^2_{22} \tilde{\varpi}+ \partial_i \phi \partial^2_{2j} \phi \partial_2 \tilde{\varpi}.
	\end{equation*}
	Under the change of coordinates, we see the term $\mathrm{P}_{ij} (\partial \rho \partial \varpi)$ and $K$ as a whole part,i.e,
	\begin{equation*}
		\begin{split}
			\widetilde{[\mathrm{P}_{ij} (\partial \rho \partial \varpi)]}&=\left[ \mathrm{P}_{ij} (\partial \rho \partial \varpi) \right](t,x_1,x_2-\phi(t,x')),
			\\
			\tilde{K}&=K(t,x_1,x_2-\phi(t,x')),
		\end{split}
	\end{equation*}
	As a result, the left side of \eqref{502} becomes
	\begin{equation*}
		\begin{split}
			\widetilde{\mathbf{T}} \Bigl(&\partial^2_{ij} \tilde{\varpi}- \partial^2_{ij} \phi \partial_2 \tilde{\varpi}-\partial_j \phi \partial^2_{2i}\tilde{\varpi}-\partial_i \phi \partial^2_{2j}\tilde{\varpi}\\
			&+\partial_i \phi \partial_j \phi \partial^2_{22} \tilde{\varpi}+ \partial_i \phi \partial^2_{2j} \phi \partial_2 \tilde{\varpi} - \widetilde{[\mathrm{P}_{ij} (\partial \rho \partial \varpi)]} \Bigr),
		\end{split}
	\end{equation*}
	where
	\begin{equation*}
		\widetilde{\mathbf{T}} =(\partial_t+ \partial_t \phi \partial_2)+ \tilde{v}^i (\partial_i+\partial_i \phi \partial_2).
	\end{equation*}
	Organizing it in order, the expression of \eqref{500} could be
	\begin{equation}\label{503}
		\begin{split}
			& (\partial_t+ \tilde{v}^i \partial_i) \tilde{B}
			+ (\partial_t \phi+ \tilde{v}^i \partial_i \phi) \partial_2 \tilde{B}= \tilde{K},
		\end{split}
	\end{equation}
	where
	\begin{align}\label{tB}
		\tilde{B} &:=  \partial^2_{ij} \tilde{\varpi}- \partial^2_{ij} \phi \partial_2 \tilde{\varpi}-\partial_j \phi \partial^2_{2i}\tilde{\varpi}-\partial_i \phi \partial^2_{2j}\tilde{\varpi}
		\\
		\notag   &\quad\ +\partial_i \phi \partial_j \partial^2_{22} \tilde{\varpi}+ \partial_i \phi \partial^2_{2j} \phi \partial_2 \tilde{\varpi}-\widetilde{[\mathrm{P}_{ij} (\partial \rho \partial \varpi)]} .
	\end{align}
	If we set
	\begin{equation}\label{B}
		\begin{split}
			B=\partial^2_{ij} {\varpi}-\mathrm{P}_{ij} (\partial \rho \partial \varpi),
		\end{split}
	\end{equation}
	then $B$ is transformed to $\tilde{B}$ under changing of coordinates $x_2 \rightarrow x_2-\phi(t,x')$. Multiplying $ \tilde{B} $ on \eqref{503} and integrating it on $[-2,2] \times \mathbb{R}^2$, one has
	\begin{align}\label{B0}
		\|\tilde{B}\|^2_{L^2(\Sigma)}
		&\leq  \ | \int^{2}_{-2} \int_{\mathbb{R}^2} \tilde{K} \cdot \tilde{B}   dx d\tau |
		+ \| d\bv \|_{L^1_tL_x^\infty} \|\tilde{B} \|^2_{L^2_x}
		\\
		\notag   &\quad + |\int^{2}_{-2} \int_{\mathbb{R}^2} (\partial_t \phi+ \tilde{v}^i \partial_i \phi) \partial_2 \tilde{B} \cdot \tilde{B} dxd\tau |.
	\end{align}
	On the left side, we see that
	\begin{equation}\label{BR}
		\|\tilde{B}\|^2_{L^2(\Sigma)}=\| {B}|_{\Sigma} \|_{L^2_t L^2_{x'}(\Sigma)}=\vert\kern-0.25ex\vert\kern-0.25ex\vert{B}\vert\kern-0.25ex\vert\kern-0.25ex\vert^2_{0, 2, \Sigma}.
	\end{equation}
	Let us estimate the right hand of \eqref{B0} as follows. By using Lemma \ref{te0}, we have
	\begin{equation}\label{B1}
		\|\tilde{B} \|^2_{L^2_x} \leq \|{B} \|^2_{L^2_x}.
	\end{equation}
	By H\"older's inequality and $\phi$ independent with $x_2$, we arrive at the bound
	\begin{align}\label{B2}
		&|\int^{2}_{-2} \int_{\mathbb{R}^2} (\partial_t \phi+ \tilde{v}^i \partial_i \phi) \partial_2 \tilde{B} \cdot \tilde{B} dxd\tau | \\
		\notag   &\quad= |\int^{2}_{-2} \int_{\mathbb{R}^2} \partial_2 (\partial_t \phi+ \tilde{v}^i \partial_i \phi)| \tilde{B} |^2 dxd\tau |
		\\
		\notag   &\quad= |\int^{2}_{-2} \int_{\mathbb{R}^2} |\partial_2 \tilde{v}^i| \cdot |\partial_i \phi| | \tilde{B} |^2 dxd\tau |
		\\
		\notag   &\quad \leq \| \partial \bv  \|_{L^1_t L^\infty_x} \| \partial \phi  \|_{L^\infty_t L_x^\infty} \| \tilde{B}  \|_{L^\infty_t L^2_x}
		\\
		\notag   &\quad \leq \| \partial \bv  \|_{L^1_t L^\infty_x} \| \partial \phi  \|_{L^\infty_t L_x^\infty} \| {B}  \|_{L^\infty_t L^2_x} .
	\end{align}
	We note that there is a Riesz operator in $K$, we then pull the coordinate back by the transform $x_2-\phi(t,x') \rightarrow x_2$. Then, we have
	\begin{align}\label{KB}
		\ | \int^{2}_{-2} \int_{\mathbb{R}^2} \tilde{K} \cdot \tilde{B}   dx d\tau |&= \ | \int^{2}_{-2} \int_{\mathbb{R}^2} {K} \cdot {B}   dx d\tau |
		\\
		\notag  & \leq \| K\|_{L^1_t L^2_x} \| B \|_{L^\infty_tL^2_x}.
	\end{align}
	Combining \eqref{B0}--\eqref{B2} yields
	\begin{align*}
		\vert\kern-0.25ex\vert\kern-0.25ex\vert{B}\vert\kern-0.25ex\vert\kern-0.25ex\vert^2_{0, 2, \Sigma} &\lesssim \| \partial \bv  \|_{L^1_t L^\infty_x} \| \partial \phi  \|_{L^{\infty}_t L_x^\infty} \| {B}  \|_{L^\infty_t L^2_x}\\
		&\quad +\| d\bv \|_{L^1_tL_x^\infty} \|{B} \|^2_{L^2_x}+\| K\|_{L^1_t L^2_x} \| B \|_{L^\infty_tL^2_x}.
	\end{align*}
	By \eqref{aa2} and \eqref{e55}, we have
	\begin{equation}\label{B00}
		\vert\kern-0.25ex\vert\kern-0.25ex\vert{B}\vert\kern-0.25ex\vert\kern-0.25ex\vert^2_{0, 2, \Sigma} \lesssim \epsilon^2_2+\| K\|_{L^1_t L^2_x} \| B \|_{L^\infty_tL^2_x}.
	\end{equation}
	It remains for us to bound $\| K \|_{L^1_t L^2_x}$. Recalling \eqref{K1}, we can obtain
	\begin{equation*}
		\| K \|_{L^1_t L^2_x}
		\leq  \| \mathrm{P}_{ij} R \|_{L^1_t L^2_x}+ \| [\mathrm{P}_{ij}, \mathbf{T}](\Delta \varpi-\partial \rho \partial \varpi) \|_{L^1_t L^2_x}.
	\end{equation*}
	Using $\mathrm{P}_{ij}$, a Riesz operator, we can show that by H\"older's inequality
	\begin{align}\label{Ra}
		\| \mathrm{P}_{ij} R \|_{L^1_t L^2_x}  &\lesssim   \| R \|_{L^1_t L^2_x}
		\\
		\notag 					       & \lesssim  \| \partial \bv \|_{L^4_t L^\infty_x} \| \partial \rho \|_{L^\infty_t L^2_x}\\
		\notag 					       &\quad + \| \mathrm{e}^{\rho} \|_{L^\infty_t L^\infty_x} ( \| \partial \rho \|_{L^4_t L^\infty_x} \| \varpi \|_{L^\infty_t L^2_x} + \| \partial \varpi\|_{L^1_t L^2_x})
		\\
		\notag   &\quad + \| \partial \bv \|_{L^4_t L^\infty_x} \| \partial^2 \varpi\|_{L^\infty_t L^2_x} \\
		\notag &\quad  + \| \partial \bv \|_{L^4_t L^\infty_x} \| \partial \rho \|_{L^4_t L^\infty_x} \| \partial \varpi\|_{L^\infty_t L^2_x}
		\\
		\notag    & \lesssim  \big( \| \partial \bv, \partial \rho \|_{L^4_t L^\infty_x} + \| \partial \bv\|_{L^4_t L^\infty_x} \| \partial \rho \|_{L^4_t L^\infty_x} \big)\\
		\notag &\quad\times (\| \varpi\|_{L^\infty_t H^2_x}+ \| \rho \|_{L^\infty_t H^s_x})
		\\
		\notag    & \lesssim  \epsilon^2_2.
	\end{align}
	By Lemma \ref{jhr}, we see that
	\begin{equation}\label{PT}
		\begin{split}
			\| [\mathrm{P}_{ij}, \mathbf{T}](\Delta \varpi-\partial \rho \partial \varpi) \|_{L^1_t L^2_x} & \leq  \|  \partial \bv\|_{L^4_t C^\delta_x} \| \Delta \varpi-\partial \rho \partial \varpi \|_{L^\infty_t L^2_x}.
		\end{split}
	\end{equation}
	On the other hand, by using \eqref{aa2}, we have
	\begin{align}\label{LI2}
		\| \Delta \varpi-\partial {\rho} \partial \varpi \|_{L^2_t L^2_x} &\leq  \| \Delta \varpi\|_{L^2_t L^2_x} + \|\partial {\rho} \partial \varpi \|_{L^2_t L^2_x}
		\\
		\notag 					  & \leq  \| \varpi \|_{L^\infty_t H^2_x} + \|\partial {\rho}\|_{L^4_t L^\infty_x} \| \partial \varpi \|_{L^\infty_t L^2_x}
		\\ \notag  &\lesssim  \epsilon_2+ \epsilon_2^2 \lesssim \epsilon_2.
	\end{align}
	Substituting \eqref{LI2} to \eqref{PT} and using \eqref{aa2}, we could get the bound
	\begin{equation}\label{PTe}
		\| [\mathrm{P}_{ij}, \mathbf{T}](\Delta \varpi-\partial \rho \partial \varpi) \|_{L^1_t L^2_x} \lesssim \epsilon_2.
	\end{equation}
	Adding \eqref{PTe} and \eqref{Ra}, one has
	\begin{equation}\label{B3}
		\| K \|_{L^1_t L^2_x}
		\leq  \| \mathrm{P}_{ij} R \|_{L^1_t L^2_x}+ \| [\mathrm{P}_{ij}, \mathbf{T}](\Delta \varpi-\partial \rho \partial \varpi) \|_{L^1_t L^2_x} \lesssim \epsilon_2,
	\end{equation}
	which when inserted into \eqref{B00} yields the inequality
	\begin{equation}\label{BB0}
		\vert\kern-0.25ex\vert\kern-0.25ex\vert{B}\vert\kern-0.25ex\vert\kern-0.25ex\vert^2_{0, 2, \Sigma} \lesssim \epsilon^2_2+\epsilon_2 \| B \|_{L^\infty_tL^2_x}.
	\end{equation}
	Recalling \eqref{B} and using \eqref{aa2}, we have
	\begin{align*}
		\| B \|_{L^\infty_tL^2_x} &\leq  \| \partial^2 {\varpi} - \mathrm{P}_{ij}(\partial \rho \partial \varpi ) \|_{L^\infty_tL^2_x}
		\\
		&\leq  \| \partial^2 \tilde{\varpi}\|_{L^\infty_tL^2_x}  + \| \mathrm{P}_{ij}(\partial \rho \partial \varpi ) \|_{L^\infty_tL^2_x}
		\\
		&\leq  \| \partial^2 {\varpi}\|_{L^\infty_tL^2_x}  + \| \partial \rho \partial \varpi  \|_{L^\infty_tL^2_x}
		\\
		& \lesssim \|{\varpi}\|_{L^\infty_tH^2_x} (1+ \|\partial \rho \|_{L^\infty_tH^{s-1}_x} ) \lesssim \epsilon_2,
	\end{align*}
	which combining with \eqref{BB0} give us
	\begin{equation}\label{BB}
		\vert\kern-0.25ex\vert\kern-0.25ex\vert {B}\vert\kern-0.25ex\vert\kern-0.25ex\vert_{0, 2, \Sigma} \lesssim \epsilon_2.
	\end{equation}
	Using \eqref{B} again, we derive that
	\begin{align}\label{5}
		\vert\kern-0.25ex\vert\kern-0.25ex\vert{B}\vert\kern-0.25ex\vert\kern-0.25ex\vert_{0, 2, \Sigma} & = \vert\kern-0.25ex\vert\kern-0.25ex\vert \partial^2 \tilde{\varpi} - \mathrm{P}_{ij}(\partial \rho \partial \varpi )\vert\kern-0.25ex\vert\kern-0.25ex\vert_{0, 2, \Sigma}
		\\
		\notag  & \geq \vert\kern-0.25ex\vert\kern-0.25ex\vert \partial^2 {\varpi}\vert\kern-0.25ex\vert\kern-0.25ex\vert_{0, 2, \Sigma} - \vert\kern-0.25ex\vert\kern-0.25ex\vert\mathrm{P}_{ij}(\partial \rho \partial \varpi )\vert\kern-0.25ex\vert\kern-0.25ex\vert_{0, 2, \Sigma}.
	\end{align}
	It remains for us to estimate $\vert\kern-0.25ex\vert\kern-0.25ex\vert \mathrm{P}_{ij}(\partial \rho \partial \varpi )\vert\kern-0.25ex\vert\kern-0.25ex\vert_{0, 2, \Sigma}$. For the $1$-codimension submanifold of $\Sigma$ in $\mathbb{R}^{+} \times \mathbb{R}^2$, by Sobolev imbedding, we have
	\begin{align}\label{TY0}
		{\vert\kern-0.25ex\vert\kern-0.25ex\vert  \mathrm{P}_{ij}(\partial \rho \partial \varpi)  \vert\kern-0.25ex\vert\kern-0.25ex\vert}_{0, 2, \Sigma}
		&=\|  \mathrm{P}_{ij}(\partial \rho \partial \varpi ) \|_{L^2_t L^2_{x'}}
		\\
		\notag   & \leq \| \mathrm{P}_{ij}(\partial \rho \partial \varpi ) \|_{L^2_t H^{a}_x},  \ a> \frac{1}{2},
		\\
		\notag   & \leq \| \partial \rho \partial \varpi  \|_{L^2_t H^{a}_x}
		\\
		\notag    & \leq \|\partial \rho\|_{L^2_t H^{s-1}_x}  \| \partial \varpi  \|_{L^2_t H^{1}_x} \lesssim \epsilon^2_2.
	\end{align}
	Combining \eqref{BB}, \eqref{5} and \eqref{TY0}, we derive
	\begin{equation}\label{W1e}
		{\vert\kern-0.25ex\vert\kern-0.25ex\vert \partial^2 {\varpi} \vert\kern-0.25ex\vert\kern-0.25ex\vert }_{0, 2, \Sigma} \leq {\vert\kern-0.25ex\vert\kern-0.25ex\vert \partial^2 {\varpi}\vert\kern-0.25ex\vert\kern-0.25ex\vert }_{0, 2, \Sigma}+ {\vert\kern-0.25ex\vert\kern-0.25ex\vert\mathrm{P}_{ij}(\partial \rho \partial \varpi )\vert\kern-0.25ex\vert\kern-0.25ex\vert }_{0, 2, \Sigma} \lesssim \epsilon_2.
	\end{equation}

	\textbf{Step 3}: $\vert\kern-0.25ex\vert\kern-0.25ex\vert \partial \varpi\vert\kern-0.25ex\vert\kern-0.25ex\vert_{1,2,\Sigma}$. We also note
	\begin{equation*}
		\partial_t \partial \varpi + \bv \cdot \nabla \partial \varpi =  \partial \bv \cdot \partial \varpi.
	\end{equation*}
	By \eqref{W1e} and Sobolev imbedding, we see that
	\begin{align}\label{TW}
		\vert\kern-0.25ex\vert\kern-0.25ex\vert \partial_t \partial \varpi \vert\kern-0.25ex\vert\kern-0.25ex\vert_{0,2, \Sigma} & \leq \vert\kern-0.25ex\vert\kern-0.25ex\vert \bv \cdot \nabla \partial \varpi\vert\kern-0.25ex\vert\kern-0.25ex\vert_{0,2, \Sigma} + \vert\kern-0.25ex\vert\kern-0.25ex\vert \partial \bv \cdot \partial \varpi\vert\kern-0.25ex\vert\kern-0.25ex\vert_{0,2,\Sigma}
		\\
		\notag  & \leq \| \bv \|_{L^\infty_{t,x}} \vert\kern-0.25ex\vert\kern-0.25ex\vert\partial^2 \varpi\vert\kern-0.25ex\vert\kern-0.25ex\vert_{0,2, \Sigma}+ \| \partial \bv \cdot \partial \varpi\|_{L^2_t H^a_x}, \quad a>\frac{1}{2},
		\\
		\notag   & \leq \| \bv \|_{L^\infty_t H^s_x} \vert\kern-0.25ex\vert\kern-0.25ex\vert\partial^2 \varpi\vert\kern-0.25ex\vert\kern-0.25ex\vert_{0,2, \Sigma}+ \| \partial \bv \|_{L^\infty_t H_x^{s-1}} \| \partial \varpi \|_{L^\infty_t H_x^1}
		\\
		\notag   & \lesssim \epsilon_2.
	\end{align}
	For any function $f$, the term $\partial_{x'}\tilde{f}$ can be calculated by
	\begin{equation*}
		\partial_{x'}\tilde{f}= \nabla f \cdot (1, d\phi)^{\mathrm{T}},
	\end{equation*}
	where $\tilde{\cdot}$ denotes the function expressed in the new coordinates and $\tilde{f}(t,x)=f(t,x',x_2+\phi(t,x'))$. We then have
	\begin{equation*}
		\vert\kern-0.25ex\vert\kern-0.25ex\vert  \partial_{x'} {f} \vert\kern-0.25ex\vert\kern-0.25ex\vert_{0,2, \Sigma} \leq (1+ \|d\phi\|_{L_{t,x'}^\infty}) \vert\kern-0.25ex\vert\kern-0.25ex\vert  \partial f \vert\kern-0.25ex\vert\kern-0.25ex\vert_{0,2,\Sigma}.
	\end{equation*}
	Based on this fact, we can deduce
	\begin{equation}\label{W2}
		\vert\kern-0.25ex\vert\kern-0.25ex\vert \partial_{x'} \partial \varpi\vert\kern-0.25ex\vert\kern-0.25ex\vert_{0,2,\Sigma} \leq (1+ \|d\phi\|_{L_{t,x'}^\infty}) \vert\kern-0.25ex\vert\kern-0.25ex\vert \partial (\partial \varpi) \vert\kern-0.25ex\vert\kern-0.25ex\vert_{0,2,\Sigma} \leq (1+ \epsilon_1) \epsilon_2 \lesssim \epsilon_2.
	\end{equation}
	Gathering \eqref{W0}, \eqref{TW}, and \eqref{W2}, we get
	\begin{equation}\label{W3}
		\vert\kern-0.25ex\vert\kern-0.25ex\vert \partial \varpi\vert\kern-0.25ex\vert\kern-0.25ex\vert_{1,2,\Sigma} \lesssim \epsilon_2.
	\end{equation}

	\textbf{Step 4}: $\vert\kern-0.25ex\vert\kern-0.25ex\vert \varpi\vert\kern-0.25ex\vert\kern-0.25ex\vert_{2,2,\Sigma}$. Note $\mathbf{T}\varpi=0$. By changing of coordinates $x_2 \rightarrow x_2-\phi(t,x')$, we can get
	\begin{equation*}
		(\partial_t + \partial_t \phi\partial_2) \tilde{\varpi}+ \tilde{v}^i (\partial_i+ \partial_i \phi \partial_2) \tilde{\varpi}=0.
	\end{equation*}
	Multiplying $\tilde{W}$ and integrating it on the whole space-time, we have
	\begin{equation*}
		\vert\kern-0.25ex\vert\kern-0.25ex\vert \varpi \vert\kern-0.25ex\vert\kern-0.25ex\vert^2_{0,2,\Sigma} \lesssim \| d \bv \|_{L^4_t L_x^\infty} (1+ \| d \phi \|_{L_{t,x'}^\infty})\| \varpi \|^2_{L^\infty_t L^2_x} \lesssim \epsilon^2_2,
	\end{equation*}
	which when taken square yields
	\begin{equation}\label{W00}
		\vert\kern-0.25ex\vert\kern-0.25ex\vert \varpi \vert\kern-0.25ex\vert\kern-0.25ex\vert_{0,2,\Sigma} \lesssim \epsilon_2.
	\end{equation}
	By using
	\begin{equation}\label{W1}
		\partial_{x'}\tilde{\varpi}= \nabla \varpi \cdot (1, d\phi)^{\mathrm{T}},
	\end{equation}
	we thus have
	\begin{equation*}
		\partial^2_{x'}\tilde{\varpi}= \partial_{x'}(\nabla \varpi) \cdot (1, d\phi)^{\mathrm{T}}+ \nabla \varpi \cdot (0, \partial_{x'}d\phi)^{\mathrm{T}}.
	\end{equation*}
	Combining \eqref{e55}, \eqref{W0}, and \eqref{W2}, we see that
	\begin{align}\label{W10}
		\vert\kern-0.25ex\vert\kern-0.25ex\vert \partial^2_{x'}\tilde{\varpi} \vert\kern-0.25ex\vert\kern-0.25ex\vert_{0,2,\Sigma} &\leq  \vert\kern-0.25ex\vert\kern-0.25ex\vert \partial_{x'}(\nabla \varpi)\vert\kern-0.25ex\vert\kern-0.25ex\vert_{0,2,\Sigma}  \|(1, d\phi)^{\mathrm{T}}\|_{L_{t,x'}^\infty}
		\\
		\notag   & \quad + \| \nabla \varpi \|_{L^\infty_t L^2_{x'}(\Sigma)} \vert\kern-0.25ex\vert\kern-0.25ex\vert (0, \partial_{x'}d\phi)^{\mathrm{T}}\vert\kern-0.25ex\vert\kern-0.25ex\vert_{L^2_t L^\infty_{x'}(\Sigma)}
		\\
		\notag  & \lesssim  \epsilon_2 \epsilon_1+ \| \varpi \|_{L^\infty_t H^{\frac{3}{2}}_{x'}(\Sigma)} \vert\kern-0.25ex\vert\kern-0.25ex\vert (0, \partial_{x'}d\phi)^{\mathrm{T}}\vert\kern-0.25ex\vert\kern-0.25ex\vert_{L^2_t L^\infty_{x'}(\Sigma)}
		\\
		\notag 	&\lesssim   \epsilon_2 \epsilon_1+ \vert\kern-0.25ex\vert\kern-0.25ex\vert \varpi \vert\kern-0.25ex\vert\kern-0.25ex\vert_{2,2,\Sigma} \vert\kern-0.25ex\vert\kern-0.25ex\vert d\phi-dt \vert\kern-0.25ex\vert\kern-0.25ex\vert_{L^2_t H^s_{x'}(\Sigma)}
		\\
		\notag  & \lesssim   \epsilon_2 \epsilon_1+ \epsilon_1 \vert\kern-0.25ex\vert\kern-0.25ex\vert \varpi \vert\kern-0.25ex\vert\kern-0.25ex\vert_{2,2,\Sigma}.
	\end{align}
	Above, we use the trace theorem
	\begin{equation}\label{TR}
		\| \varpi \|_{L^\infty_t H^{\frac{3}{2}}_{x'}(\Sigma)} \leq \vert\kern-0.25ex\vert\kern-0.25ex\vert \varpi \vert\kern-0.25ex\vert\kern-0.25ex\vert_{2,2,\Sigma}.
	\end{equation}
	Operating $\partial_t$ on \eqref{W1}, we get
	\begin{align}\label{W11}
		\vert\kern-0.25ex\vert\kern-0.25ex\vert \partial_t \partial_{x'} \varpi \vert\kern-0.25ex\vert\kern-0.25ex\vert_{0,2, \Sigma} &\leq  \vert\kern-0.25ex\vert\kern-0.25ex\vert \partial_{t}(\nabla \varpi)\vert\kern-0.25ex\vert\kern-0.25ex\vert_{0,2,\Sigma} \cdot \|(1, d\phi)^{\mathrm{T}}\|_{L_{t,x}^\infty}
		\\
		\notag  &\quad  + \| \nabla \varpi \|_{L^\infty_t L^2_{x'}(\Sigma)}\cdot \vert\kern-0.25ex\vert\kern-0.25ex\vert (0, \partial_{t}d\phi)^{\mathrm{T}}\vert\kern-0.25ex\vert\kern-0.25ex\vert_{L^2_t L^\infty_{x'}(\Sigma)}
		\\
		\notag   &\lesssim  \epsilon_2 \epsilon_1+ \| \varpi \|_{L^\infty_t H^{\frac{3}{2}}_{x'}(\Sigma)}\cdot \vert\kern-0.25ex\vert\kern-0.25ex\vert (0, \partial_{x'}d\phi)^{\mathrm{T}}\vert\kern-0.25ex\vert\kern-0.25ex\vert_{L^4_t L^\infty_{x'}(\Sigma)}
		\\
		\notag  &\lesssim \epsilon_2 \epsilon_1+ \vert\kern-0.25ex\vert\kern-0.25ex\vert \varpi \vert\kern-0.25ex\vert\kern-0.25ex\vert_{2,2,\Sigma} \cdot \vert\kern-0.25ex\vert\kern-0.25ex\vert (0, \partial_{x'}d\phi)^{\mathrm{T}}\vert\kern-0.25ex\vert\kern-0.25ex\vert_{L^4_t L^\infty_{x'}(\Sigma)}
		\\
		\notag  &\lesssim   \epsilon_2 \epsilon_1+ \epsilon_1 \vert\kern-0.25ex\vert\kern-0.25ex\vert \varpi \vert\kern-0.25ex\vert\kern-0.25ex\vert_{2,2,\Sigma}.
	\end{align}
	Adding \eqref{W00}, \eqref{W10}, and \eqref{W11} can give us
	\begin{equation*}
		\vert\kern-0.25ex\vert\kern-0.25ex\vert \varpi \vert\kern-0.25ex\vert\kern-0.25ex\vert_{2,2,\Sigma} \lesssim  \epsilon_2 \epsilon_1+ \epsilon_1 \vert\kern-0.25ex\vert\kern-0.25ex\vert \varpi \vert\kern-0.25ex\vert\kern-0.25ex\vert_{2,2,\Sigma}.
	\end{equation*}
	For $\epsilon_1$ is sufficiently small, we can see
	\begin{equation}\label{W4}
		\vert\kern-0.25ex\vert\kern-0.25ex\vert \varpi \vert\kern-0.25ex\vert\kern-0.25ex\vert_{2,2,\Sigma} \lesssim \epsilon_2 \epsilon_1 \lesssim   \epsilon_2.
	\end{equation}
	By using $s \in (\frac{7}{4},2]$, we have
	\begin{equation}\label{W4A}
		\vert\kern-0.25ex\vert\kern-0.25ex\vert \varpi \vert\kern-0.25ex\vert\kern-0.25ex\vert_{s,2,\Sigma} \leq \vert\kern-0.25ex\vert\kern-0.25ex\vert \varpi \vert\kern-0.25ex\vert\kern-0.25ex\vert_{2,2,\Sigma} \lesssim   \epsilon_2.
	\end{equation}
	Combining \eqref{TW}, \eqref{W2}, \eqref{W3}, \eqref{W4}, and \eqref{W4A}, we complete the proof of Lemma \ref{te21}.
\end{proof}
\begin{remark}
	Indeed, the characteristic estimate $\vert\kern-0.25ex\vert\kern-0.25ex\vert \varpi \vert\kern-0.25ex\vert\kern-0.25ex\vert_{s,2,\Sigma}\lesssim \epsilon_2$ is enough for us to prove Proposition \ref{r1} and \ref{r2}.
\end{remark}
\begin{Lemma}\label{freU}
	Assume $s \in (\frac{7}{4},2]$. Let $\bU=(\bv,p(\rho))^{\mathrm{T}}$ be stated in Lemma \ref{sh}. Then
	\begin{equation}\label{508B}
		\vert\kern-0.25ex\vert\kern-0.25ex\vert 2^j(\bU-S_j\bU), d S_j \bU, 2^{-j} d \partial S_j\bU \vert\kern-0.25ex\vert\kern-0.25ex\vert_{s-1,2,\Sigma} \lesssim \|\bU\|_{L^\infty_t H^{s}_x} +  \|d\bU\|_{L^4_t L_x^\infty},
	\end{equation}
	where $S_j=\sum_{k\leq j-1}\Delta_k$ and $\Delta_k$ is the Littlewood-Palay projectors (c.t.~\eqref{Dej}).
\end{Lemma}
\begin{proof}
	Let $\Delta_0$ be a standard multiplier of order $0$ on $\mathbb{R}^2$, such that $\Delta_0$ is additionally bounded on $L_x^\infty(\mathbb{R}^2)$. Clearly,
	\begin{equation*}
		A_0(\bU)(\Delta_0\bU)_t+ A_1(\bU)(\Delta_0\bU)_{x_1}+ A_2(\bU) (\Delta_0\bU)_{x_2}= -\sum^2_{i=0}[\Delta_0, A_i(\bU)]\partial_{x_i}\bU.
	\end{equation*}
	By Lemma \ref{te1} and Lemma \ref{jiaohuan}, this implies that
	\begin{align}\label{60}
		\vert\kern-0.25ex\vert\kern-0.25ex\vert \Delta_0 \bU\vert\kern-0.25ex\vert\kern-0.25ex\vert_{s,2,\Sigma} &\lesssim  \|d \bU \|_{L^4_t L^{\infty}_x}+ \| \bU\|_{L^{\infty}_tH_x^{s}}\\
		\notag &\quad	+\| \sum^2_{i=0}[\Delta_0, A_i(\bU)]\partial_{x_i}\bU \|_{L^{1}_tH_x^{s-1}}
		\\
		\notag   &\lesssim  \|d \bU \|_{L^4_t L^{\infty}_x}+ \| \bU\|_{L^{\infty}_tH_x^{s}}.
	\end{align}
	To control the norm of $2^j(\bU-S_j\bU)$, we write
	\begin{equation*}
		2^j(\bU-S_j\bU)=2^j \sum_{k\geq j} \Delta_k \bU,
	\end{equation*}
	where $\Delta_k $ satisfies the above conditions for $\Delta_0$. Using \eqref{60}, we get
	\begin{equation*}
		\vert\kern-0.25ex\vert\kern-0.25ex\vert 2^j(\bU-S_j\bU) \vert\kern-0.25ex\vert\kern-0.25ex\vert_{s-1,2,\Sigma} \lesssim \|\bU\|_{L^\infty_t H^{s}_x} +  \|d\bU\|_{L^4_t L_x^\infty}.
	\end{equation*}
	Finally, applying \eqref{60} to $\Delta_0=\sum_{k\leq j-1}\Delta_{k}$ and $\Delta_0=2^{-j}\partial S_{j}$ can give us
	\begin{equation*}
		\vert\kern-0.25ex\vert\kern-0.25ex\vert d S_j\bU \vert\kern-0.25ex\vert\kern-0.25ex\vert_{s-1,2,\Sigma} +\vert\kern-0.25ex\vert\kern-0.25ex\vert2^{-j} d \partial S_j \bU \vert\kern-0.25ex\vert\kern-0.25ex\vert_{s-1,2,\Sigma} \lesssim \|\bU\|_{L^\infty_t H^{s}_x} +  \|d\bU\|_{L^4_t L_x^\infty}.
	\end{equation*}
	Therefore, the proof of Lemma \ref{freU} is completed.
\end{proof}
As a direct corollary, we can see
\begin{Lemma}\label{fre}
	Assume $s \in (\frac{7}{4},2]$. Let $(\bv, \rho, \varpi) \in \mathcal{H}$ and $\bJ=(\bv, \rho)^{\mathrm{T}}$. Then
	\begin{align}\label{508C}
		&\vert\kern-0.25ex\vert\kern-0.25ex\vert 2^j(\bJ-S_j \bJ), d S_j\bJ, 2^{-j} d \partial S_j\bJ \vert\kern-0.25ex\vert\kern-0.25ex\vert_{s-1,2,\Sigma}\\
		\notag  &\quad  \lesssim \|\bv, \rho\|_{L^\infty_t H^{s}_x} +  \|d\bv, d\rho\|_{L^4_t L_x^\infty} \lesssim \epsilon_2,
	\end{align}
	where $S_j=\sum_{k\leq j-1}\Delta_k$ and $\Delta_k$ is the Littlewood-Palay projector defined in~\eqref{Dej}.
\end{Lemma}
We are now ready to give a proof of Proposition \ref{r1}.
\begin{proof}[Proof of Proposition \ref{r1}]
	For $(\bv, \rho, \varpi) \in \mathcal{H}$, then $(\bv, \rho, \varpi)$ is the solution of~\eqref{pf}. Using Lemma \ref{fre},
	it suffices for us to verify that
	\begin{equation*}
		\begin{split}
			\vert\kern-0.25ex\vert\kern-0.25ex\vert {\mathbf{g}}^{\alpha \beta}-\mathbf{m}^{\alpha \beta}\vert\kern-0.25ex\vert\kern-0.25ex\vert_{s,2,\Sigma_{\theta,r}} \lesssim \epsilon_2.
		\end{split}
	\end{equation*}
	By Corollary \ref{crh}, one has
	\begin{equation*}
		\sup_{\theta,r}\vert\kern-0.25ex\vert\kern-0.25ex\vert \bv \vert\kern-0.25ex\vert\kern-0.25ex\vert_{s,2,\Sigma_{\theta,r}}+ \sup_{\theta,r}\vert\kern-0.25ex\vert\kern-0.25ex\vert \rho \vert\kern-0.25ex\vert\kern-0.25ex\vert_{s,2,\Sigma_{\theta,r}} \lesssim \epsilon_2.
	\end{equation*}
	Using the expression of $\mathbf{g}$, and using Lemma \ref{Te2}, we arrive at the bound
	\begin{equation*}
		\begin{split}
			\vert\kern-0.25ex\vert\kern-0.25ex\vert {\mathbf{g}}^{\alpha \beta}-\mathbf{m}^{\alpha \beta}\vert\kern-0.25ex\vert\kern-0.25ex\vert_{s,2,\Sigma_{\theta,r}} & \lesssim \vert\kern-0.25ex\vert\kern-0.25ex\vert \bv\vert\kern-0.25ex\vert\kern-0.25ex\vert_{s,2,\Sigma_{\theta,r}}+\vert\kern-0.25ex\vert\kern-0.25ex\vert \bv \cdot \bv\vert\kern-0.25ex\vert\kern-0.25ex\vert_{s,2,\Sigma_{\theta,r}}+\vert\kern-0.25ex\vert\kern-0.25ex\vert c_s^2(\rho)-c_s^2(0)\vert\kern-0.25ex\vert\kern-0.25ex\vert_{s,2,\Sigma_{\theta,r}}
			\\
			& \lesssim \epsilon_2.
		\end{split}
	\end{equation*}
	Consequently, the conclusion of Proposition \ref{r1} holds.
\end{proof}
\subsection{The null frame}
We introduce a null frame along $\Sigma$ as follows. Let
\begin{equation*}
	V=(dr)^*,
\end{equation*}
where $r$ is the defining function of the foliation $\Sigma$, and where $*$ denotes the identification of covectors and vectors induced by $\mathbf{g}$. Then $V$ is the null geodesic flow field tangent to $\Sigma$. Let
\begin{equation}\label{600}
	\sigma=dt(V), \qquad l=\sigma^{-1} V.
\end{equation}
Thus $l$ is the g-normal field to $\Sigma$ normalized so that $dt(l)=1$, hence
\begin{equation}\label{601}
	l=\left< dt,dx_2-d\phi\right>^{-1}_{\mathbf{g}} \left( dx_2-d \phi \right)^*.
\end{equation}
So the coefficients $l^j$ are smooth functions of $\bv, \rho$ and $d \phi$. Conversely,
\begin{equation}\label{602}
	dx_2-d \phi =\left< l,\partial_{x_2}\right>^{-1}_{\mathbf{g}} l^*,
\end{equation}
so that $d \phi$ is a smooth function of $\bv, \rho$ and the coefficients of $l$.

Next, we introduce the vector fields $e_1$ tangent to the fixed-time slice $\Sigma^t$ of $\Sigma$. We do this by applying Grahm-Schmidt orthogonalization in the metric $\mathbf{g}$ to the $\Sigma^t$-tangent vector fields $\partial_{x_1}+ \partial_{x_1} \phi \partial_{x_2}$.

Finally, we let
\begin{equation*}
	\underline{l}=l+2\partial_t.
\end{equation*}
It follows that $\{l, \underline{l}, e_1 \}$ form a null frame in the sense that
\begin{align*}
	& \left<l, \underline{l} \right>_{\mathbf{g}} =2, \qquad \qquad \ \left< e_1, e_1\right>_{\mathbf{g}}=1,
	\\
	& \left<l, l \right>_{\mathbf{g}} =\left<\underline{l}, \underline{l} \right>_{\mathbf{g}}=0, \quad \left<l, e_1 \right>_{\mathbf{g}}=\left<\underline{l}, e_1 \right>_{\mathbf{g}}=0.
\end{align*}
The coefficient of each of the fields is a smooth function of $(\bv,\rho)$ and $d \phi$. By assumption, we also have the pointwise bound
\begin{equation*}
	| e_1 - \partial_{x_1} | + | l- (\partial_t+\partial_{x_2}) | + | \underline{l} - (-\partial_t+\partial_{x_2})|  \lesssim \epsilon_1.
\end{equation*}
After that, we can state the following lemma concerning the decomposition of Ricci curvature tensor.
\begin{corollary}\label{Rfenjie}
	Let $s\in (\frac74,2]$ and $\delta$ be stated as \eqref{a1}. Let $R$ be the Riemann curvature tensor of the metric ${\mathbf{g}}$. Let $e_0=l$. Then
	\begin{equation}\label{603}
		R_{ll}=l(f_2)+f_1,
	\end{equation}
	where $|f_1|\lesssim |\partial \varpi|+ |d {\mathbf{g}}|^2$, $|f_2| \lesssim |d {\mathbf{g}}|$,
	\begin{equation}\label{604}
		\|f_2\|_{L^2_t H^{s-1}_{x'}(\Sigma)}+\|f_1\|_{L^1_t H^{s-1}_{x'}(\Sigma)} \lesssim \epsilon_2,
	\end{equation}
	and for each $t \in [-2,2]$,
	\begin{equation}\label{605}
		\|f_2(t,\cdot)\|_{C^\delta_{x'}(\Sigma^t)} \lesssim \|d \mathbf{g}\|_{C^\delta_x(\mathbb{R}^2)}.
	\end{equation}
\end{corollary}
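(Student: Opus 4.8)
The plan is to derive the standard decomposition of $R_{ll}$ for the acoustical metric $\mathbf{g}$, following the Ricci-decomposition philosophy of Klainerman--Rodnianski adapted to the wave-transport structure \eqref{fc}. First I would expand $R_{ll}$ in terms of $\mathbf{g}$, $\partial \mathbf{g}$ and $\partial^2 \mathbf{g}$ in the Cartesian coordinates: schematically $R_{ll} = l^\alpha l^\beta \left( \partial^2 \mathbf{g} + \partial \mathbf{g} \cdot \partial \mathbf{g} \right)_{\alpha\beta}$. The key point is to single out, from the second-order term, the part which can be written as a derivative along $l$ of a first-order quantity. Since $l$ is the null geodesic field and $\left<l,l\right>_{\mathbf{g}}=0$, the ``bad'' component $l^\alpha l^\beta \partial_\gamma \partial_\delta \mathbf{g}_{\alpha\beta}$ with two derivatives transversal to $\Sigma$ is actually absent: contracting the curvature twice with the null generator kills exactly those second derivatives that are not tangential. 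What remains of the genuinely second-order part is of the form $l(f_2)$ with $f_2$ a contraction of $d\mathbf{g}$ with the frame coefficients, so $|f_2|\lesssim |d\mathbf{g}|$; the leftover first-order terms are collected into $f_1$, which is quadratic in $d\mathbf{g}$ except for one structural contribution coming from the wave equations \eqref{fc}: the box operator $\square_{\mathbf{g}} v^i$ produces the source term $-[ia]e^\rho c_s^2 \partial^a\varpi$, and since $R_{ll}$ involves $\square_{\mathbf{g}}$ applied to the metric components (which are smooth functions of $\bv,\rho$), this $\partial\varpi$ term enters $f_1$. Hence $|f_1|\lesssim |\partial\varpi| + |d\mathbf{g}|^2$, giving \eqref{603}.

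Next I would establish the norm bounds \eqref{604}. For $f_2$: since $f_2$ is a smooth function of $(\bv,\rho,d\phi)$ times $d\mathbf{g}$, and $d\mathbf{g}$ is itself (by \eqref{mf} and the chain rule) a smooth function of $(\bv,\rho)$ times $d(\bv,\rho)$, Lemma \ref{fre} controls $\vert\kern-0.25ex\vert\kern-0.25ex\vert d\mathbf{g}\vert\kern-0.25ex\vert\kern-0.25ex\vert_{s-1,2,\Sigma}\lesssim\epsilon_2$; combining with Lemma \ref{te2} (the product estimate on $\Sigma$, valid since $s>1$) and the pointwise smallness of the frame coefficients yields $\|f_2\|_{L^2_t H^{s-1}_{x'}(\Sigma)}\lesssim\epsilon_2$. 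For $f_1$: the quadratic term $|d\mathbf{g}|^2$ is handled by $\|d\mathbf{g}\|_{L^4_t L^\infty_x}\cdot\vert\kern-0.25ex\vert\kern-0.25ex\vert d\mathbf{g}\vert\kern-0.25ex\vert\kern-0.25ex\vert_{s-1,2,\Sigma}$, using \eqref{aa2} for the first factor and Lemma \ref{fre} for the second, giving $L^1_t H^{s-1}_{x'}$ smallness; the $|\partial\varpi|$ term is bounded by $\vert\kern-0.25ex\vert\kern-0.25ex\vert\partial\varpi\vert\kern-0.25ex\vert\kern-0.25ex\vert_{1,2,\Sigma}\lesssim\epsilon_2$ from Lemma \ref{te21} (since $1>s-1$ when $s<2$, and by interpolation/embedding at $s=2$), again combined with the smooth-function composition and Lemma \ref{te2}. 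Finally, \eqref{605} follows from \eqref{e52}--\eqref{e53} and the fact that $f_2$ is pointwise bounded by $|d\mathbf{g}|$ times smooth coefficients: restricting to a fixed time slice and using the Sobolev embedding on $\Sigma^t$ together with $\|d\phi(t,\cdot)-dt\|_{C^{1,\delta}_{x'}}\lesssim\epsilon_2+\sup\|d\mathbf{g}(t,\cdot)\|_{C^\delta_x}$ from Proposition \ref{r2}, one gets $\|f_2(t,\cdot)\|_{C^\delta_{x'}(\Sigma^t)}\lesssim\|d\mathbf{g}\|_{C^\delta_x(\mathbb{R}^2)}$.

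The main obstacle I anticipate is the algebraic bookkeeping in the first step: correctly identifying which second-derivative combinations survive the double null contraction and reorganizing them into the form $l(f_2)$, while making sure every remaining term either is quadratic in $d\mathbf{g}$ or is captured by the $\partial\varpi$ source through the equations \eqref{fc}. This is where the special structure of the acoustical metric and the wave-transport reformulation of Lemma \ref{FC} is essential — a naive expansion would leave an uncontrolled $\partial^2(\bv,\rho)$ term, and only after substituting the wave equations for $\square_{\mathbf{g}}v^i$ and $\square_{\mathbf{g}}\rho$ does the right-hand side reduce to the claimed form. Once the decomposition is in place, the estimates \eqref{604}--\eqref{605} are a routine application of Lemmas \ref{te2}, \ref{te21}, \ref{fre} and the bootstrap hypotheses \eqref{aa2}.
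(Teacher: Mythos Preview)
Your approach is essentially the paper's: invoke the Klainerman--Rodnianski decomposition $R_{ll}=l(z)-\tfrac12 l^\alpha l^\beta\square_{\mathbf g}\mathbf g_{\alpha\beta}+H$ with $|H|\lesssim|d\mathbf g|^2$ and $|z|\lesssim|d\mathbf g|$, then use the wave equations \eqref{fc} to convert $\square_{\mathbf g}\mathbf g_{\alpha\beta}$ into $\partial\varpi$ plus quadratic terms, and finally feed in the characteristic energy estimates (Corollary~\ref{crh}, Lemma~\ref{te21}) for \eqref{604}. One phrasing is misleading: the double-null contraction does \emph{not} ``kill the second derivatives that are not tangential''; the term $l^\alpha l^\beta\square_{\mathbf g}\mathbf g_{\alpha\beta}$ genuinely contains transversal second derivatives, and what saves you is that this particular combination is $\square_{\mathbf g}$ acting on the metric coefficients, hence controllable through \eqref{fc}. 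You do say this correctly two sentences later, so the argument survives.

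There is however a genuine circularity in your proof of \eqref{605}. You invoke \eqref{e52}, i.e.\ Proposition~\ref{r2}, to control $\|d\phi(t,\cdot)-dt\|_{C^{1,\delta}_{x'}}$; but Proposition~\ref{r2} is proved \emph{after} Corollary~\ref{Rfenjie} in the logical chain (its proof goes through Lemma~\ref{chi}, which relies on the very decomposition \eqref{603} you are establishing). The paper instead gets \eqref{605} directly from Sobolev embeddings: $f_2$ is a smooth function of $(l^\gamma,\mathbf g^{\alpha\beta})$ times $d\mathbf g$, and the coefficients already have bounded $C^\delta_{x'}$ norm on each slice $\Sigma^t$ from the bootstrap hypothesis $G\leq 2\epsilon_1$ alone---\eqref{e54} plus the trace embedding $L^2_tH^s_{x'}\cap H^1_tH^{s-1}_{x'}\hookrightarrow C_tH^{s-1/2}_{x'}\hookrightarrow C_tC^\delta_{x'}$ gives $\|d\phi(t,\cdot)-dt\|_{C^\delta_{x'}}\lesssim\epsilon_1$ for every $t$, without appealing to Proposition~\ref{r2}. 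With that replacement your argument goes through.
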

\begin{proof}
	By using the remarkable decomposition in Klainerman-Rodianiski \cite{KR}, we have
	\begin{equation*}
		R_{ll}=l(f_2)-\frac{1}{2} l^\alpha l^\beta \square_{\mathbf{g}} {\mathbf{g}}_{\alpha \beta}+H,
	\end{equation*}
	where $|H| \lesssim |d{\mathbf{g}}|^2$ and
	\begin{equation*}
		f_2=l^\gamma {\mathbf{g}}^{\alpha \beta}\partial_{\beta} {\mathbf{g}}_{\alpha \gamma}-\frac{1}{2} {\mathbf{g}}^{\alpha \beta}l({\mathbf{g}}_{\alpha \beta}).
	\end{equation*}
	According to \eqref{fc}, we derive that
	\begin{equation*}
		|f_1|\lesssim |\partial \varpi|+ |d {\mathbf{g}}|^2, |f_2| \lesssim |d {\mathbf{g}}|.
	\end{equation*}
	Due to Corollary \ref{crh} and Lemma \ref{te21}, we get
	\begin{equation*}
		\|f_2\|_{L^2_t H^{s-1}_{x'}(\Sigma)}+\|f_1\|_{L^1_t H^{s-1}_{x'}(\Sigma)} \lesssim \epsilon_2.
	\end{equation*}
	It's clear that the estimate \eqref{605} can be obtained directly from Sobolev embeddings. Thus, the proof is completed.
\end{proof}

\subsection{The estimate of connection coefficients}
Define
\begin{equation*}
	\chi = \left<D_{e_1}l,e_1 \right>_{\mathbf{g}}, \qquad l(\ln \sigma)=\frac{1}{2}\left<D_{l}\underline{l},l \right>_{\mathbf{g}}.
\end{equation*}
For $\sigma$, we set the initial data $\sigma=1$ at the time $-2$. Thanks to Proposition~\ref{r1}, Lemma \ref{Te2}, \eqref{e54}, and \eqref{e53}, we have
\begin{equation}\label{606}
	\|\chi\|_{L^2_t H^{s-1}_{x'}(\Sigma)} + \| l(\ln \sigma)\|_{L^2_t H^{s-1}_{x'}(\Sigma)} \lesssim \epsilon_1.
\end{equation}
For $0 \leq \alpha \leq 1$, Let $/\kern-0.55em \partial_\alpha$ denote differentiation along $\Sigma$ in the induced coordinates. In a similar way, if we expand $l=l^\alpha /\kern-0.55em \partial_\alpha$ in the tangent frame $\partial_t, \partial_{x'}$ on $\Sigma$, then
\begin{equation}\label{607}
	l^0=1, \ \ \ \|l^1\|_{s-1,2,\Sigma} \lesssim \epsilon_1.
\end{equation}
\begin{Lemma}\label{chi}
	Let $s\in (\frac74,2]$ and $\delta$ be stated as \eqref{a1}. Let $\chi$ be defined as before. Then
	\begin{equation}\label{608}
		\|\chi\|_{L^2_t H^{s-1}_{x'}(\Sigma)} \lesssim \epsilon_2.
	\end{equation}
	Furthermore, for each $t \in [-2,2]$,
	\begin{equation}\label{609}
		\| \chi \|_{C^{\delta}_{x'}(\Sigma^t)} \lesssim \epsilon_2+ \|d \mathbf{g}\|_{C^{\delta}_{x}(\mathbb{R}^2)}.
	\end{equation}
\end{Lemma}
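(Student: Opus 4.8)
The plan is to derive a transport equation for $\chi$ along the null generator $l$ of $\Sigma$, namely a Raychaudhuri-type (structure) equation of the schematic form
\begin{equation*}
  l(\chi) + \chi^2 + \cdots = -R_{ll},
\end{equation*}
where the ``$\cdots$'' collect terms that are quadratic in the connection coefficients $\chi$, $l(\ln\sigma)$ and in $l^1$ (and involve no derivatives of these), plus terms controlled by $d\mathbf{g}$. Since $\Sigma$ is $2$-dimensional in the base and $1$-dimensional in the fibre, the null second fundamental form here is a scalar $\chi$, so this is a genuine scalar Riccati-type ODE along each generator rather than a matrix equation. I would first write out this equation carefully using the null frame $\{l,\underline{l},e_1\}$ introduced above, reading off the curvature source from Corollary \ref{Rfenjie}: $R_{ll} = l(f_2) + f_1$ with $\|f_2\|_{L^2_t H^{s-1}_{x'}(\Sigma)} + \|f_1\|_{L^1_t H^{s-1}_{x'}(\Sigma)} \lesssim \epsilon_2$ and the pointwise $C^\delta$ bound \eqref{605}.

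Next I would integrate this transport equation along the generators. The presence of the ``bad'' term $l(f_2)$ on the right is handled by the standard renormalization: set $\tilde\chi = \chi + f_2$, so that $\tilde\chi$ satisfies a transport equation whose source is $f_1$ together with quadratic terms $\chi^2, \chi f_2, f_2^2$ and the lower-order connection/frame quantities. For the $L^2_t H^{s-1}_{x'}(\Sigma)$ estimate I would run the energy method along $\Sigma$ (as in Lemma \ref{te1}/Lemma \ref{te2}, integrating over $[-1,1]\times\mathbb{R}^2$ in the flattened coordinates $x_2\mapsto x_2-\phi$), using Lemma \ref{te2} for the $H^{s-1}$-product estimates ($s-1 > 3/4$ on a line, so $H^{s-1}_{x'}(\Sigma)$ is not quite an algebra and one must be slightly careful — this is where I expect to lean on the interpolation/product lemmas and the fact that one factor always sits in a better space via \eqref{607} and \eqref{606}). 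The quadratic terms are absorbed because $\|\chi\|_{L^2_t H^{s-1}_{x'}(\Sigma)} \lesssim \epsilon_1$ and $\|l^1\|_{s-1,2,\Sigma} \lesssim \epsilon_1$ are already small by the bootstrap hypothesis, so $\chi^2$ contributes $\lesssim \epsilon_1 \|\chi\|_{L^2_t H^{s-1}_{x'}(\Sigma)}$, which can be moved to the left-hand side; the source $f_1$ and the renormalizing term $f_2$ contribute $\lesssim \epsilon_2$ by \eqref{604}. Gronwall in the $l$-parameter then yields $\|\tilde\chi\|_{L^2_t H^{s-1}_{x'}(\Sigma)} \lesssim \epsilon_2$, and since $\|f_2\|_{L^2_t H^{s-1}_{x'}(\Sigma)} \lesssim \epsilon_2$ as well, we recover \eqref{608}.

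For \eqref{609}, the $C^\delta_{x'}(\Sigma^t)$ bound, I would again use the renormalized variable and integrate the transport equation along the generators reaching a fixed time slice $\Sigma^t$ from the initial slice $t=-2$ (where $\chi$ is determined by Minkowski data, hence negligible). The renormalizing term contributes $\|f_2(t,\cdot)\|_{C^\delta_{x'}(\Sigma^t)} \lesssim \|d\mathbf{g}(t,\cdot)\|_{C^\delta_x(\mathbb{R}^2)}$ directly by \eqref{605}. The remaining source terms are integrated in $t$: $\int f_1\,dt$ is bounded in $C^\delta_{x'}$ by $\|f_1\|_{L^1_t H^{s-1}_{x'}(\Sigma)}\lesssim\epsilon_2$ using $H^{s-1}_{x'}(\mathbb{R})\hookrightarrow C^\delta_{x'}$ (valid since $s-1-\tfrac12>\delta$ for the admissible range of $\delta$), and the quadratic terms $\int(\chi^2 + \chi l^1 + \cdots)\,dt$ are bounded by $\|\chi\|^2_{L^2_tH^{s-1}_{x'}} + \cdots \lesssim \epsilon_1\epsilon_2 \lesssim \epsilon_2$ using the $L^2_tL^2_{x'}$-in-time Cauchy–Schwarz together with the algebra property $H^{s-1}_{x'}\cdot H^{s-1}_{x'}\hookrightarrow C^\delta_{x'}$ on the line (which holds because $2(s-1) - \tfrac12 > \delta$). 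Combining gives \eqref{609}.

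\textbf{Main obstacle.} The delicate point is the product estimates at regularity $s-1\in(\tfrac34,1]$ on the one-dimensional slices $\Sigma^t$: neither $H^{s-1}_{x'}$ nor the trace space is an algebra, so every quadratic term in the transport equation must be split so that the worse factor is paired with a factor measured in a space with room to spare (using $\|l^1\|_{s-1,2,\Sigma}\lesssim\epsilon_1$, $\|\chi\|_{L^2_tH^{s-1}_{x'}}\lesssim\epsilon_1$, and Lemma \ref{te2}). Keeping careful track of which structure equation terms are ``derivative of a connection coefficient'' (and hence must be renormalized into $\tilde\chi$) versus genuinely quadratic is the bookkeeping that makes the argument go through; the curvature input is already packaged correctly by Corollary \ref{Rfenjie}.
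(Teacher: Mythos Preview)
Your approach is essentially the same as the paper's: derive the Raychaudhuri-type transport equation $l(\chi)=\langle R(l,e_1)l,e_1\rangle_{\mathbf g}-\chi^2-l(\ln\sigma)\chi$, renormalize using Corollary~\ref{Rfenjie} to $l(\chi-f_2)=f_1-\chi^2-l(\ln\sigma)\chi$, and then run energy estimates in $H^{s-1}_{x'}$ followed by a $C^\delta_{x'}$ bound via Sobolev embedding on the line. (Note the sign: the paper uses $\chi-f_2$, not $\chi+f_2$, though this is cosmetic.)

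However, your ``main obstacle'' is a phantom. You write that $H^{s-1}_{x'}(\Sigma^t)$ is ``not quite an algebra,'' but the slices $\Sigma^t$ are one-dimensional, and on $\mathbb{R}$ the space $H^a$ is an algebra whenever $a>\tfrac12$. Since $s>\tfrac74$ we have $s-1>\tfrac34>\tfrac12$, so $H^{s-1}_{x'}(\Sigma^t)$ \emph{is} an algebra; the paper invokes this explicitly. All the quadratic terms $\chi^2$, $l(\ln\sigma)\chi$ are therefore handled directly by
\[
  \|\chi^2\|_{L^1_tH^{s-1}_{x'}}\lesssim \|\chi\|_{L^2_tH^{s-1}_{x'}}^2\lesssim\epsilon_1^2,
  \qquad
  \|l(\ln\sigma)\chi\|_{L^1_tH^{s-1}_{x'}}\lesssim \|l(\ln\sigma)\|_{L^2_tH^{s-1}_{x'}}\|\chi\|_{L^2_tH^{s-1}_{x'}}\lesssim\epsilon_1^2,
\]
with no need to ``split so that the worse factor is paired with a factor measured in a space with room to spare.'' The commutator $[\Lambda^{s-1},l]$ is bounded by the standard Kato--Ponce estimate using $\|l^1\|_{H^{s-1}_{x'}}$, exactly as in the paper. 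So the bookkeeping you flag as delicate is in fact routine; once you correct this, your sketch matches the paper's proof.
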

\begin{proof}
	The famous transport equation for $\chi$ along null hypersurfaces (see references \cite{KR2} and \cite{ST}) can be described as
	\begin{equation*}
		l(\chi)=\left< R(l,e_1)l, e_1 \right>_{\mathbf{g}}-\chi^2-l(\ln \sigma)\chi.
	\end{equation*}
	Due to Corollary \ref{Rfenjie}, we write the above equation as
	\begin{equation}\label{610}
		l(\chi-f_2)=f_1-\chi^2-l(\ln \sigma)\chi,
	\end{equation}
	where
	\begin{equation}\label{611}
		\|f_2\|_{L^2_t H^{s-1}_{x'}(\Sigma)}+\|f_1\|_{L^1_t H^{s-1}_{x'}(\Sigma)} \lesssim \epsilon_2,
	\end{equation}
	and for any $t \in [-2,2]$,
	\begin{equation}\label{612}
		\|f_2(t,\cdot)\|_{C^\delta_{x'}(\Sigma^t)} \lesssim \|d \mathbf{g}\|_{C^\delta_x(\mathbb{R}^2)}.
	\end{equation}
	Let $\Lambda_{x'}$ be the fractional derivative operator in the $x'$ variables, which has been defined in \eqref{gam}. We thus have
	\begin{multline}\label{613}
		\|\Lambda_{x'}^{s-1}(\chi-f_2)(t,\cdot) \|_{L^2_{x'}(\Sigma^t)} \\
		\lesssim \| [\Lambda_{x'}^{s-1},l](\chi-f_2) \|_{L^1_tL^2_{x'}(\Sigma^t)}
		\\   + \| \Lambda_{x'}^{s-1}\left( f_1-\chi^2-l(\ln \sigma)\chi \right) \|_{L^1_tL^2_{x'}(\Sigma^t)}.
	\end{multline}
	A direct calculation shows that
	\begin{multline}\label{614}
		\| \Lambda_{x'}^{s-1}\left( f_1-\chi^2-l(\ln \sigma)\chi \right) \|_{L^1_tL^2_{x'}(\Sigma^t)} \\
		\lesssim \|f_1\|_{L^1_tH^{s-1}_{x'}(\Sigma^t)}+ \|\chi\|^2_{L^2_tH^{s-1}_{x'}(\Sigma^t)}
		\\
		+ \|\chi\|_{L^2_tH^{s-1}_{x'}(\Sigma^t)}\cdot\|l(\ln \sigma)\|_{L^2_tH^{s-1}_{x'}(\Sigma^t)},
	\end{multline}
	where we use the fact that $H^{s-1}_{x'}(\Sigma^t)$ is an algebra.

	We next bound
	\begin{align*}
		\| [\Lambda_{x'}^{s-1},l](\chi-f_2) \|_{L^2_{x'}(\Sigma^t)} &\leq \| /\kern-0.55em \partial_{\alpha} l^{\alpha} (\chi-f_2)(t,\cdot) \|_{H^{s-1}_{x'}(\Sigma^t)}
		\\
		& \quad \ + \|[\Lambda_{x'}^{s-1} /\kern-0.55em \partial_{\alpha}, l^{\alpha}](\chi-f_2)(t,\cdot) \|_{L^{2}_{x'}(\Sigma^t)}.
	\end{align*}
	By Kato-Ponce estimate and Sobolev embeddings, the above could be bounded by
	\begin{equation}\label{615}
		\|l^1(t,\cdot)\|_{H^{s-1}_{x'}(\Sigma^t)} \| \Lambda_{x'}^{s-1}(\chi-f_2)(t,\cdot) \|_{L^{2}_{x'}(\Sigma^t)} .
	\end{equation}
	Gathering \eqref{606}, \eqref{607}, \eqref{611}, \eqref{613}, \eqref{614}, and \eqref{615} together, we thus prove that
	\begin{equation*}
		\sup_t \|(\chi-f_2)(t,\cdot)\|_{H^{s-1}_{x'}(\Sigma^t)}  \lesssim \epsilon_2.
	\end{equation*}
	From \eqref{610}, we can see
	\begin{equation}\label{616}
		\|\chi-f_2\|_{C^{\delta}_{x'}} \lesssim \| f_1 \|_{L^1_tC^{\delta}_{x'}}+ \| \chi^2 \|_{L^1_tC^{\delta}_{x'} }+ \| l(\ln \sigma)\chi\|_{L^1_tC^{\delta}_{x'}}.
	\end{equation}
	Using the Sobolev imbedding $H^{1}(\mathbb{R})\hookrightarrow C^{\delta}(\mathbb{R})$ and Gronwall's inequality, we can derive that
	\begin{equation*}
		\| \chi \|_{C^{\delta}_{x'}(\Sigma^t)} \lesssim \epsilon_2+ \|d \mathbf{g} \|_{C^{\delta}_{x}(\mathbb{R}^2)}.
	\end{equation*}
\end{proof}
\subsection{The proof of Proposition \ref{r2}}
We first recall that
\begin{equation*}
	G(\bv, \rho)= \vert\kern-0.25ex\vert\kern-0.25ex\vert d\phi(t,x')-dt\vert\kern-0.25ex\vert\kern-0.25ex\vert_{s,2, \Sigma}.
\end{equation*}
Using \eqref{602} and the estimate of $\vert\kern-0.25ex\vert\kern-0.25ex\vert \mathbf{g}-\mathbf{m} \vert\kern-0.25ex\vert\kern-0.25ex\vert_{s,2,\Sigma}$ in Proposition \ref{r1}, then the estimate \eqref{G} follows from the bound
\begin{equation*}
	\vert\kern-0.25ex\vert\kern-0.25ex\vert l-(\partial_t-\partial_{x_2})\vert\kern-0.25ex\vert\kern-0.25ex\vert_{s,2,\Sigma} \lesssim \epsilon_2,
\end{equation*}
where it is understood that one takes the norm of the coefficients of $l-(\partial_t-\partial_{x_2})$ in the standard frame on $\mathbb{R}^{2+1}$. The geodesic equation, together with the bound for Christoffel symbols $\|\Gamma^\alpha_{\beta \gamma}\|_{L^4_t L^\infty_x} \lesssim \|d {\mathbf{g}} \|_{L^4_t L^\infty_x}\lesssim \epsilon_2$, imply that
\begin{equation*}
	\|l-(\partial_t-\partial_{x_2})\|_{L^\infty_{t,x}} \lesssim \epsilon_2,
\end{equation*}
so it suffices to bound the tangential derivatives of the coefficients of $l-(\partial_t-\partial_{x_2})$ in the norm $L^2_t H^{s-1}_{x'}(\Sigma)$. By Proposition \ref{r1}, we can estimate Christoffel symbols
\begin{equation*}
	\|\Gamma^\alpha_{\beta \gamma} \|_{L^2_t H^{s-1}_{x'}(\Sigma^t)} \lesssim \epsilon_2.
\end{equation*}
Note that $H^{s-1}_{x'}(\Sigma^t)$ is a algebra. We then have
\begin{equation*}
	\|\Gamma^\alpha_{\beta \gamma} e_1^\beta l^\gamma\|_{L^2_t H^{s-1}_{x'}(\Sigma^t)} \lesssim \epsilon_2.
\end{equation*}
We are now in a position to establish the following bound,
\begin{equation*}
	\| \left< D_{e_1}l, e_1 \right>\|_{L^2_t H^{s-1}_{x'}(\Sigma^t)}+ \| \left< D_{e_1}l, \underline{l} \right>\|_{L^2_t H^{s-1}_{x'}(\Sigma^t)}+ \|\left< D_{l}l, \underline{l} \right>\|_{L^2_t H^{s-1}_{x'}(\Sigma^t)} \lesssim \epsilon_2.
\end{equation*}
The first term is $\chi$, which has estimated in Lemma \ref{chi}. For the second term, noting
\begin{equation*}
	\left< D_{e_1}l, \underline{l} \right>=\left< D_{e_1}l, 2\partial_t \right>=-2\left< D_{e_1}\partial_t,l \right>,
\end{equation*}
then it can be bounded by using Proposition \ref{r1}. Similarly, we can control the last term by Proposition \ref{r1}. It remains for us to show that
\begin{equation*}
	\| d \phi(t,x')-dt \|_{C^{1,\delta}_{x'}(\mathbb{R})}  \lesssim \epsilon_2+ \| d\mathbf{g}(t,\cdot)\|_{C^{\delta}_x(\mathbb{R}^2)}.
\end{equation*}
To do that, it suffices for us to establish
\begin{equation*}
	\|l(t,\cdot)-(\partial_t-\partial_{x_2})\|_{C^{1,\delta}_{x'}(\mathbb{R})} \lesssim \epsilon_2+ \| d \mathbf{g} (t,\cdot)\|_{C^{\delta}_x(\mathbb{R}^2)}.
\end{equation*}
The coefficients of $e_1$ are small in $C^{\delta}_{x'}(\Sigma^t)$ perturbations of their constant-coefficient analogs, so it suffices to show that
\begin{equation*}
	\|\left< D_{e_1}l, e_1 \right>(t,\cdot)\|_{C^{\delta}_{x'}(\Sigma^t)}
	+\|\left< D_{e_1}l, \underline{l} \right>(t,\cdot)\|_{C^{\delta}_{x'}(\Sigma^t)}  \lesssim \epsilon_2+ \| d\mathbf{g}(t,\cdot)\|_{C^{\delta}_x(\mathbb{R}^2)}.
\end{equation*}
Above, the first term is bounded by Lemma \ref{chi}, and the second by using
\begin{equation*}
	\|\left< D_{e_1}\partial_t, l \right>(t,\cdot)\|_{C^{\delta}_{x'}(\Sigma^t)} \lesssim  \| d\mathbf{g}(t,\cdot)\|_{C^{\delta}_x(\mathbb{R}^2)}.
\end{equation*}
Consequently, we complete the proof of Proposition \ref{r2}.
\section{proof of Proposition \ref{p5} and continuous dependence}
\subsection{Proof of Proposition \ref{p5}} To prove Proposition \ref{p5}, let us first give a type of Strichartz estimates. In the above sections, we obtain characteristic energy estimates of solutions and get enough regularity of hypersurfaces. By using the result of Smith and Tataru (\cite{ST}, Proposition 7.1, page 36), we can directly obtain the following
\begin{proposition}\label{r3}\cite{ST}
	Let $\frac74<s\leq 2$. Suppose that $(\bv, \rho, \varpi) \in \mathcal{H}$ and $G(\bv, \rho)\leq 2 \epsilon_1$. For $1\leq r \leq s+1$, then the linear equation $\square_{ \mathbf{g}} f=0$ is well-posed for initial data in $H^r \times H^{r-1}$. Moreover, the following estimates
	\begin{equation*}
		\|\left<\partial \right>^{k} f \|_{L^4_t L^\infty_x} \lesssim   \|f_0\|_{H^r}+ \|f_1\|_{H^{r-1}}, \quad k< r-\frac{3}{4},
	\end{equation*}
	and
	\begin{equation*}
		\|f \|_{L^\infty_t H^s_x}+\| \partial_t f \|_{L^\infty_t H^{s-1}_x} \lesssim   \|f_0\|_{H^r}+ \|f_1\|_{H^{r-1}},
	\end{equation*}
	hold.
\end{proposition}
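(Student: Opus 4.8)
The plan is to read off Proposition \ref{r3} from the wave-packet parametrix of Smith and Tataru \cite{ST} once the geometric hypotheses of their construction have been verified for the truncated acoustical metric $\mathbf{g}$ of \eqref{mf}. Their framework needs three ingredients: that $\mathbf{g}$ coincide with the Minkowski metric near the initial time (which holds by construction, $\mathbf{g}=\mathbf{m}$ for $t\in[-1,-\tfrac12]$); that the characteristic foliation $\{\Sigma_{\theta,r}\}$ be a controlled perturbation of the flat null foliation, measured both by the functional $G$ and by pointwise $C^{1,\delta}_{x'}$-closeness of $d\phi_{\theta,r}$ to $dt$; and that the null-frame connection coefficients together with the curvature component $R_{ll}$ enjoy the mixed-norm regularity along $\Sigma_{\theta,r}$ needed to propagate wave packets. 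So the work is to match up these requirements with what Section 6 has produced, and then quote \cite[Proposition 7.1]{ST} verbatim.

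First I would assemble the estimates already proved. Proposition \ref{r1} supplies $\vert\kern-0.25ex\vert\kern-0.25ex\vert \mathbf{g}^{\alpha\beta}-\mathbf{m}^{\alpha\beta}\vert\kern-0.25ex\vert\kern-0.25ex\vert_{2,2,\Sigma_{\theta,r}}\lesssim\epsilon_2$ together with the frequency-localized bounds in \eqref{e51}; Proposition \ref{r2} gives $G(\bv,\rho)\lesssim\epsilon_2$ in \eqref{G} and the pointwise control \eqref{e52} of $d\phi_{\theta,r}-dt$; Lemma \ref{chi} and \eqref{607} bound the connection coefficients $\chi$ and $l^1$ in $L^2_tH^{s-1}_{x'}(\Sigma)$, with the accompanying $C^\delta_{x'}$ estimate \eqref{609}; and Corollary \ref{Rfenjie} produces the decomposition $R_{ll}=l(f_2)+f_1$ with $\|f_2\|_{L^2_tH^{s-1}_{x'}(\Sigma)}+\|f_1\|_{L^1_tH^{s-1}_{x'}(\Sigma)}\lesssim\epsilon_2$ and the $C^\delta_{x'}$ bound \eqref{605}. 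These are exactly the hypotheses of the cited proposition. The energy estimate \eqref{e30} then follows from the basic energy inequality for $\square_{\mathbf{g}}f=0$ — Gronwall in $t$ against $\|d\mathbf{g}\|_{L^1_tL^\infty_x}\lesssim\epsilon_2$ in the case $r=1$, and the Kato–Ponce commutator estimate of Lemma \ref{jiaohuan} for $1<r\leq s+1$ — while the Strichartz estimate \eqref{st3}, first for a single $\Sigma_{\theta,r}$ and then reassembled over $(\theta,r)$ by an almost-orthogonality argument of the type already used in \eqref{a10}, is precisely the output of the parametrix.

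The only place where the present situation departs from the quasilinear wave setting of \cite{ST} is the source term $\partial\varpi$ in the wave equations \eqref{fc} for $v^i$, which enters the lower-order curvature piece $f_1$ in Corollary \ref{Rfenjie}. The key point — and the reason the argument closes — is that this contribution was already absorbed at the level of the characteristic energy estimates: Lemma \ref{te21} provides $\vert\kern-0.25ex\vert\kern-0.25ex\vert\partial\varpi\vert\kern-0.25ex\vert\kern-0.25ex\vert_{1,2,\Sigma}\lesssim\epsilon_2$, which is exactly what is needed to keep $\|f_1\|_{L^1_tH^{s-1}_{x'}(\Sigma)}$ under control. I expect the genuine obstacle to have been this curvature decomposition with the vorticity term, rather than anything in the statement of Proposition \ref{r3} itself: the naive transport equation $\mathbf{T}\varpi=0$ does not give enough regularity for $\partial\varpi$ restricted to a null hypersurface, which is precisely why the modified quantity $\Delta\varpi-\partial\rho\,\partial\varpi$ and the Riesz/Hodge machinery of Lemma \ref{te21} were introduced. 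Once Section 6 is in hand, no new work is required and Proposition \ref{r3} follows directly.
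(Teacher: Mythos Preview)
Your proposal is correct and matches the paper's approach exactly: the paper also obtains Proposition~\ref{r3} by directly quoting \cite[Proposition~7.1]{ST}, once the characteristic energy estimates and hypersurface regularity of Section~6 (Propositions~\ref{r1}--\ref{r2}, Lemma~\ref{chi}, Corollary~\ref{Rfenjie}, and in particular Lemma~\ref{te21} for the vorticity contribution to $f_1$) supply the required hypotheses. Your write-up is in fact more explicit than the paper's one-line citation about which estimates feed into which inputs of the Smith--Tataru machinery, but the logical content is the same.
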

By using Proposition \ref{r3}, we can deduce the following Strichartz estimates by Duhamel's principle.
\begin{proposition}\label{r8}
	Let $\frac74<s\leq 2$. Suppose that $(\bv, \rho, \varpi) \in \mathcal{H}$ and $G(\bv, \rho)\leq 2 \epsilon_1$. For $1\leq r \leq s+1$, then the linear equation
	\begin{equation*}
		\begin{cases}
			\square_{ \mathbf{g}} f=\mathbf{T}\Theta+B,
			\\
			(f,\partial_t f)|_{t=0}=(f_0, f_1),
		\end{cases}
	\end{equation*}
	is well-posed on $[-2,2]$ with the initial data in $H^r \times H^{r-1}$. Moreover, the following estimates
	\begin{align}\label{r80}
		\|\left<\partial \right>^{k} f \|_{L^4_t L^\infty_x} &\lesssim   \|f_0\|_{H^r}+ \|f_1\|_{H^{r-1}} \\
		\notag  &\quad +\| \Theta \|_{L^\infty_tH^{r-1} \cap L^1_tH_x^r}+\|B\|_{L^1_tH_x^{r-1}}, \quad k< r-\frac{3}{4},
	\end{align}
	and
	\begin{align}\label{r81}
		\|f \|_{L^\infty_t H^s_x}+\| \partial_t f \|_{L^\infty_t H^{s-1}_x} &\lesssim   \|f_0\|_{H^r}+ \|f_1\|_{H^{r-1}} \\
		\notag     &\quad +\| \Theta \|_{L^\infty_tH^{r-1} \cap L^1_tH_x^r}+\|B\|_{L^1_tH_x^{r-1}},
	\end{align}
	hold.
\end{proposition}
Let us postpone to prove Proposition \ref{r8} and let us first give a modified Duhamel's principle as follows.
\begin{Lemma}\label{LD}
	Let the metric $g$ is defined in \eqref{metricd}. If $f$ is the solution of
	\begin{equation*}
		\begin{split}
			\square_{g}f&=0, \quad t> \tau,
			\\
			f|_{t=\tau}&=-\Theta(\tau,x), \quad \mathbf{T} f|_{t=\tau}=-B(\tau,x),
		\end{split}
	\end{equation*}
	then
	\begin{equation*}
		V(t,x)=\int^t_0 f(t,x;\tau)d\tau
	\end{equation*}
	is the solution of the linear wave equation
	\begin{equation*}
		\begin{split}
			\square_{g}V&=\mathbf{T}\Theta+B,
			\\
			V|_{t=0}&=0, \quad \mathbf{T}V|_{t=0}=-\Theta(0,x).
		\end{split}
	\end{equation*}
\end{Lemma}
\begin{proof}
	We first note $\square_g= -\mathbf{T} \mathbf{T}+c^2_s \Delta$.  By calculating, we get
	\begin{equation*}
		\begin{split}
			\mathbf{T} V &= \int^t_0 \mathbf{T} f(t,x;\tau)d\tau+f(t,x;t)
			\\
			&=\int^t_0 \mathbf{T} f(t,x;\tau)d\tau-\Theta(t,x).
		\end{split}
	\end{equation*}
	Taking the operator $\mathbf{T}$ again, we have
	\begin{align}\label{d1}
		-\mathbf{T}\mathbf{T} V &= -\int^t_0\mathbf{T} \mathbf{T} f(t,x;\tau)d\tau-\mathbf{T}f(t,x;t)+\mathbf{T}\Theta
		\\
		\notag  &= -\int^t_0\mathbf{T} \mathbf{T} f(t,x;\tau)d\tau+B(t,x)+\mathbf{T}\Theta.
	\end{align}
	On the other hand,
	\begin{equation}\label{d2}
		c^2_s \Delta V= \int^t_0 c^2_s \Delta f(t,x;\tau)d\tau.
	\end{equation}
	Adding \eqref{d1} and \eqref{d2}, we can derive that
	\begin{equation*}
		\square_{g}V=\mathbf{T}\Theta+B.
	\end{equation*}
	Furthermore, it satisfies
	\begin{equation*}
		V|_{t=0}=0, \quad \mathbf{T}V|_{t=0}=-\Theta(0,x).
	\end{equation*}
	We complete the proof of this lemma.
\end{proof}
\begin{proof}[proof of Proposition \ref{r8}] By using Lemma \ref{LD} and the classical Duhamel's principle, we can derive \eqref{r80} and \eqref{r81} from Proposition \ref{r3}.
\end{proof}
\begin{proposition}\label{r4}
	Let $\frac74<s\leq 2$. Suppose that $(\bv, \rho, \varpi) \in \mathcal{H}$ and $G(\bv, \rho)\leq 2 \epsilon_1$. Let $\delta$ be stated in \eqref{a1}. Then $(\bv, \rho)$ of \eqref{pf} satisfies the Strichartz estimates
	\begin{equation}\label{st7}
		\begin{split}
			\|d \bv_{+}, d \rho\|_{L^4_t C^\delta_x} \lesssim  \| \bv_0\|_{H^s}+ \|\rho_0\|_{H^s}+ \|\varpi_0\|_{H^{1+}},
		\end{split}
	\end{equation}
	and
	\begin{equation}\label{st6}
		\| d \bv, d \rho \|_{L^4_t C^\delta_x} \lesssim  \| \bv_0\|_{H^s}+ \|\rho_0\|_{H^s}+ \|\varpi_0\|_{H^{1+\delta_1}}, \quad \delta_1>\delta ,
	\end{equation}
	and
	\begin{equation}\label{st8}
		\| d \bv, d \rho \|_{L^4_t L^\infty_x} \lesssim  \| \bv_0\|_{H^s}+ \|\rho_0\|_{H^s}+ \|\varpi_0\|_{H^{1+}}.
	\end{equation}
	Furthermore, we have
	\begin{equation}\label{st5}
		\| d \bv, d \rho \|_{L^4_t C^\delta_x} \leq \epsilon_2 .
	\end{equation}
\end{proposition}
\begin{proof}
	Let us recall the equation \eqref{fcp}. By Proposition \ref{r8}, we have
	\begin{multline*}
		\|d \bv_{+}, d \rho\|_{L^4_t C^\delta_x} \\
		\leq C(\| \mathbf{T}\bv_{-}\|_{L^\infty_tH^{s-1} \cap L^1_tH_x^s}+ \|\bQ\|_{L^1_tH_x^{s-1}}+ \|\bE\|_{L^1_tH_x^{s-1}}+ \|\mathcal{D}|_{L^1_tH_x^{s-1}}).
	\end{multline*}
	Using Lemma \ref{yux} and Assumption \eqref{aa2}, it yields
	\begin{align}\label{et}
		&\|d \bv_{+}, d \rho\|_{L^4_t C^\delta_x}\\
		\notag  &\quad  \leq C(\| \mathbf{T}\bv_{-}\|_{L^\infty_tH_x^{s}} + \|\bQ\|_{L^1_tH_x^{s-1}}+ \|\bE\|_{L^1_tH_x^{s-1}}+ \|\mathcal{D}|_{L^1_tH_x^{s-1}})
		\\
		\notag  &\quad \leq C \| d\rho, d\bv \|_{L_x^\infty} \| \bv, \rho \|_{H_x^{s}}+\| \varpi\|_{H_x^{1+}} \|\bv, \rho \|_{H_x^{s}}
		\\
		\notag  &\quad \lesssim  \| \bv_0\|_{H^s}+ \|\rho_0\|_{H^s}+ \|\varpi_0\|_{H^{1+}} .
	\end{align}
	By \eqref{dvc}, we can see that
	\begin{equation*}
		\begin{split}
			\|\partial \bv\|_{L^\infty_x} &=  \|\partial \bv_{-}\|_{L^\infty_x}+\|\partial \bv_{+}\|_{L^\infty_x}
			\\
			& \leq \|\partial \bv_{-}\|_{H^{1+}_x}+\|\partial \bv_{+}\|_{C^\delta_x},
		\end{split}
	\end{equation*}
	and
	\begin{equation*}
		\begin{split}
			\|\partial \bv\|_{C^\delta_x} &=  \|\partial \bv_{-}\|_{C^\delta_x}+\|\partial \bv_{+}\|_{C^\delta_x}
			\\
			& \leq \|\partial \bv_{-}\|_{H^{\delta+1+}_x}+\|\partial \bv_{+}\|_{C^\delta_x},
		\end{split}
	\end{equation*}
	where $\delta_1>\delta$. By \eqref{et}, we can derive that
	\begin{equation*}
		\|\partial \bv\|_{L^4_t L^\infty_x} \lesssim \| \bv_0\|_{H^s}+ \|\rho_0\|_{H^s}+ \|\varpi_0\|_{H^{1+}},
	\end{equation*}
	and
	\begin{equation*}
		\|\partial \bv\|_{L^4_t C^\delta_x} \lesssim \| \bv_0\|_{H^s}+ \|\rho_0\|_{H^s}+ \|\varpi_0\|_{H^{1+\delta_1}}.
	\end{equation*}
	By using \eqref{sq}, we get
	\begin{equation*}
		\|d \bv\|_{L^4_t C^\delta_x} \lesssim \|\partial \bv\|_{L^4_t C^\delta_x} \lesssim \| \bv_0\|_{H^s}+ \|\rho_0\|_{H^s}+ \|\varpi_0\|_{H^{1+\delta_1}},
	\end{equation*}
	and
	\begin{equation*}
		\|d \bv\|_{L^4_t L^\infty_x} \lesssim \|\partial \bv\|_{L^4_t L^\infty_x} \lesssim \| \bv_0\|_{H^s}+ \|\rho_0\|_{H^s}+ \|\varpi_0\|_{H^{1+}}.
	\end{equation*}
	At this stage, we have finished the proof of Proposition \ref{r4}.
\end{proof}

\begin{proof}[Proof of Proposition \ref{p5}]
	By using Proposition \ref{r2}, we know that \eqref{ag} holds. By using Proposition \ref{r3}, we obtain \eqref{st3} and \eqref{e30}.

	It remains for us to prove \eqref{aa3}. Using Theorem \ref{be}, and \eqref{st5}, we have
	\begin{equation}\label{aa6}
		\begin{split}
			&\|(\bv, \rho)\|_{L^\infty_t H_x^{s}}+ \|(\partial_t \bv, \partial_t \rho)\|_{L^\infty_t H_x^{s-1}}+ \| \varpi \|_{L^\infty_t H_x^{2}}+\| \partial_t \varpi \|_{L^\infty_t H_x^{1}}
			\\
			\lesssim &\epsilon_3 (1+ \epsilon_3^{\frac{1}{2}})\exp\{ \int^1_{-1}(1+\epsilon_2)^2 \}
			\\
			\leq &\epsilon_2.
		\end{split}
	\end{equation}
	The estimate \eqref{aa6} combining \eqref{st5} can yield \eqref{aa3}. Therefore, we complete the proof of Proposition \ref{p5}.
\end{proof}
\subsection{Continuous dependence.} We will discuss the problem of continuous dependence by extending a frequency envelope approach to a wave-transport system. The frequency envelope approach was first introduced by Tao \cite{Tao1}, and it has been used on incompressible Euler equation \cite{TO} and quasilinear hyperbolic equations \cite{IT1} for classical solutions. While, we are concerned with rough solution, which crucially relies on Strichartz estimates of a linear wave equation endowed with the acoustic metric.
\begin{corollary}
	[Continuous dependence on data]\label{cv} Assume $M_0$ being any positive number and
	\begin{equation}\label{Id}
		\| \bv_0\|_{H^s}+ \| \rho_0\|_{H^s} + \| \varpi_0 \|_{H^2} \leq M_0.
	\end{equation}
	Let $\{(\bv_{0j}, \rho_{0j}, \varpi_{0j})\}_{j \in \mathbb{N}^+}$ be a sequence of initial data converging to $(\bv_0,\rho_0,\varpi_0)$ in space $H^s \times H^s \times H^2$. Then for all $j \in \mathbb{N}^+$, there exists $T>0$ ($T$ depends on $M_0$ and s) such that the solution $(\bv_{j}, \rho_{j}, \varpi_{j})$ of \eqref{CEE} is with the initial data $(\bv_{0j}, \rho_{0j}, \varpi_{0j})$ and the solution $(\bv, \rho, \varpi)$ of \eqref{CEE} is with the initial data $(\bv_{0}, \rho_{0}, \varpi_{0})$. Moreover, the sequence $\{(\bv_{j}, \rho_{j}, \varpi_{j})\}_{j \in \mathbb{N}^+}$ converge to $(\bv,\rho,\varpi)$ on $[0,T]$ in $H_x^s \times H_x^s \times H_x^2$. 
\end{corollary}
Before we prove it, let us give a definition of the frequency envelope, which is first proposed by Tao \cite{Tao1}.
\begin{definition}
	We say that $\{c_k\}_{k \in \mathbb{N}^+} \in \ell^2$ is a frequency envelope for a function $f$ in $H^s$ if we have the
	following two properties:

	(1) Energy bound:
	\begin{equation}\label{f0}
		\|P_k f \|_{H^s} \lesssim c_k,
	\end{equation}
	\quad (2)  Slowly varying:
	\begin{equation}\label{f1}
		\frac{c_k}{c_j} \lesssim 2^{\delta|j-k|}, \quad j,k \in \mathbb{N}^+.
	\end{equation}
	We call such envelopes sharp, if
	\begin{equation*}
		\| f\|^2_{H^s} \approx \sum_{k \geq 0} c^2_k.
	\end{equation*}
\end{definition}
\begin{proof}[proof of Corollary \ref{cv}]
	We divide the proof into three steps.

	\textbf{Step 1}: a convergence in weaker spaces. By using Theorem \ref{stability}, for $j,l \in \mathbb{N}^+$, it yields
	\begin{multline*}
		\|(\bv_j-\bv_l, \rho_j-\rho_l)(t,\cdot)\|_{H_x^{s-1}}+\|(\varpi_j-\varpi_l)(t,\cdot)\|_{H_x^{1}} \\
		\lesssim \|(\bv_{0j}-\bv_{0l}, \rho_{0j}-\rho_{0l})\|_{H_x^{s}}+\|\varpi_{0j}-\varpi_{0l}\|_{H_x^{2}}.
	\end{multline*}
	This implies that $\{(\bv_{j}, \rho_{j}, \varpi_{j})\}_{j \in \mathbb{N}^+}$ is a Cauchy sequence in $H^{s-1}_x \times H^{s-1}_x \times H^1_x$. Thus, there is a limit $(\bv^*, \rho^*, \varpi^*)$, and $(\bv^*, \rho^*, \varpi^*)\in H^{s-1}_x \times H^{s-1}_x \times H^1_x$ satisfying \eqref{CEE} and
	\begin{equation*}
		(\bv^*, \rho^*, \varpi^*)|_{t=0}=(\bv_0, \rho_0, \varpi_0) \ \ \mathrm{in} \ \ H^{s-1}_x \times H^{s-1}_x \times H^1_x.
	\end{equation*}
	Therefore, if we can prove $(\bv^*, \rho^*, \varpi^*) \in H_x^s \times H^s_x \times H^2_x$, then
	\begin{equation*}
		(\bv^*, \rho^*, \varpi^*)=(\bv, \rho, \varpi) \ \ \mathrm{in} \ \ H^{s}_x \times H^{s}_x \times H^2_x.
	\end{equation*}
	Now, our target is to prove $(\bv^*, \rho^*, \varpi^*) \in H_x^s \times H^s_x \times H^2_x$. To be simple, we use the notation $(\bv, \rho, \varpi)$ to replace $(\bv^*, \rho^*, \varpi^*)$. So we need to prove $(\bv, \rho, \varpi) \in H_x^s \times H^s_x \times H^2_x$. Taking $j\rightarrow \infty$, we obtain
	\begin{equation*}
		\lim_{j\rightarrow \infty}(\bv_j, \rho_j)  \rightarrow  (\bv, \rho) \ \mathrm{in} \ H^{s-1}_x, \qquad \lim_{j\rightarrow \infty} \varpi_j  \rightarrow  \varpi  \ \mathrm{in} \ H^{1}_x.
	\end{equation*}
	By interpolation formula, we then have
	\begin{multline*}
		\|(\bv_j-\bv, \rho_j-\rho)\|_{H_x^\sigma}  \\
		\lesssim \|(\bv_j-\bv, \rho_j-\rho)\|^{s-\sigma}_{H_x^{s-1}} \|(\bv_j-\bv, \rho_j-\rho)\|^{1+\sigma-s}_{H_x^s}, \quad s-1 \leq \sigma <s.
	\end{multline*}
	and
	\begin{equation*}
		\|\varpi_j-\varpi\|_{H_x^\gamma} \lesssim  \|\varpi_j-\varpi\|^{2-\gamma}_{H_x^{1}} \|\varpi_j-\varpi\|^{\gamma-1}_{H_x^2}, \quad 1 \leq \gamma <2.
	\end{equation*}
	As a result, we get
	\begin{equation*}
		\lim_{j\rightarrow \infty}(\bv_j, \rho_j)  =  (\bv, \rho) \ \ \mathrm{in} \ \ H^{\sigma}_x, \ \ 0 \leq \sigma <s,
	\end{equation*}
	and
	\begin{equation*}
		\lim_{j\rightarrow \infty}\varpi_j =  \varpi \ \ \mathrm{in} \ \ H^{\gamma}_x, \ \ 0 \leq \gamma <2.
	\end{equation*}

	\textbf{Step 2}: the construction of smooth solutions. Note
	\begin{equation*}
		\| \bv_0\|_{H^s}+ \| \rho_0\|_{H^s} + \| \varpi_0 \|_{H^2} \leq M_0.
	\end{equation*}
	We set $\bU_0=(v^1_{0},v^2_{0},p(\rho_0))^{\mathrm{T}}$. By \cite{TO, IT1}, there exists a sharp frequency envelope for $v^1_{0},v^2_{0}$, $\rho_0$ and $\varpi_0$ respectively. Let $\{ c^{(i)}_{k} \}_{k \geq 0}$($i=1,2,3$) be a sharp frequency envelope for $v^1_0, v^2_0, \rho_0$ in $ H^s$. Let $\{ c^{(4)}_{k} \}_{k \geq 0}$ be a sharp frequency envelope for $\varpi_0$ in $ H^2$. We choose a family of regularizations $\bU^h_0=(v^{1h}_0, v^{2h}_0, p(\rho^{h}_0))^{\mathrm{T}}\in \cap^\infty_{a=0}H^a$ at frequencies $\lesssim 2^h$ where $h$ is a dyadic frequency parameter. Denote
	\begin{equation*}
		\varpi^h_0=\bar{\rho}^{-1}\mathrm{e}^{-\rho^{h}_0}\mathrm{curl}\bv^h_0.
	\end{equation*}
	Then the functions $v^{1h}_0, v^{2h}_0, \rho^{h}_0$, and $\varpi^h_0$ have the following properties:

	(i)  uniform bounds
	\begin{equation*}
		\| P_k v^{ih}_0 \|_{H_x^s} \lesssim c^{(i)}_k, i=1,2, \quad \| P_k \rho^{h}_0 \|_{H_x^s} \lesssim c^{(3)}_k,\quad \| P_k \varpi^{h}_0 \|_{H_x^2} \lesssim c^{(4)}_k,
	\end{equation*}
	
	(ii)  high frequency bounds
	\begin{align*}
		& \|   v^{ih}_0 \|_{H_x^{s+a}} \lesssim 2^{ah}c^{(i)}_h, i=1,2, \quad \|   \rho^{h}_0 \|_{H_x^{s+a}} \lesssim 2^{ah}c^{(3)}_h,\\
		&\|  \varpi^{h}_0 \|_{H_x^{2+a}} \lesssim 2^{ah}c^{(4)}_h, \quad a  \geq 0,
	\end{align*}
	
	(iii)  difference bounds
	\begin{equation*}
		\begin{split}
			&\|  v^{1(h+1)}_0- v^{1h}_0\|_{L_x^{2}} \lesssim 2^{-sh}c^{(1)}_h, \quad  \|  v^{2(h+1)}_0- v^{2h}_0\|_{L_x^{2}} \lesssim 2^{-sh}c^{(2)}_h,
			\\
			&\| \rho^{h+1}_0- \rho^{h}_0 \|_{L_x^{2}} \lesssim 2^{-sh}c^{(3)}_h, \qquad \ \  \| \varpi^{h+1}_0- \varpi^{h}_0 \|_{L_x^{2}} \lesssim 2^{-2h}c^{(4)}_h,
		\end{split}
	\end{equation*}
	
	(iv)  limit
	\begin{equation}\label{li}
		\begin{split}
			&\lim_{h\rightarrow \infty}(\bv^h_0, \rho_0^h)=(\bv_0, \rho_0),  \qquad \mathrm{in}\ H^s,
			\\
			&\lim_{h\rightarrow \infty}\varpi^h_0=\varpi_0,  \qquad \qquad \quad \ \ \ \mathrm{in}\ H^2.
		\end{split}
	\end{equation}
	We set $\bv^{h}=(v^{1h},v^{2h})$. Consider a Cauchy problem \eqref{CEE} with the initial data
	\begin{equation*}
		\begin{split}
			& (\bv^h,\rho^h,\varpi^h)|_{t=0}=(\bv^h_0,\rho^h_0,\varpi^h_0),
			\\
			& (\partial_t \bv^h, \partial_t \rho^h)|_{t=0}=(-\bv^h_0\cdot \nabla \bv^h_0+c^2_s\nabla \rho^h_0,-\bv^h_0 \cdot \nabla \rho^h_0-\mathrm{div}\bv^h_0).
		\end{split}
	\end{equation*}
	By Proposition \ref{p3}, we could obtain a family of smooth solutions $(v^{1h}, v^{2h}, \rho^{h}, \varpi^h)$ on a time interval $[0,T]$ ($T$ depends only on the size of $\|\bv_0\|_{H^s}+\|\rho_0\|_{H^s}+\|\varpi_0\|_{H^2}$). Furthermore, we can claim that

	(i)  high frequency bounds
	\begin{equation}\label{dbsh}
		\|  (\bv^{h} , \rho^{h}) \|_{H_x^{s+a}} \lesssim 2^{ah} M_0 ,\quad \| \varpi^{h} \|_{H_x^{2+a}} \lesssim 2^{ah}M_0, \quad a \geq 0,
	\end{equation}
	
	(ii)  difference bounds
	\begin{equation}\label{dbs}
		\begin{split}
			& \|  (\bv^{h+1}- \bv^{h}, \rho^{h+1}- \rho^{h}) \|_{L_x^{2}} \lesssim 2^{-sh}c_h, \\
			&\|  (\bv^{h+1}- \bv^{h}, \rho^{h+1}- \rho^{h}) \|_{\dot{H}_x^{s}} \lesssim c_h,
		\end{split}
	\end{equation}
	and
	\begin{equation}\label{dbs1}
		\| \varpi^{h+1}- \varpi^{h} \|_{L_x^{2}} \lesssim 2^{-\frac74h}2^{-\frac12(s-\frac74)h}d_h,\quad \| \varpi^{h+1}- \varpi^{h} \|_{\dot{H}_x^{2}} \lesssim d_h,
	\end{equation}
	where
	\begin{equation*}
		c_h=c^{(1)}_h + c^{(2)}_h +c^{(3)}_h, \quad d_h= c^{(1)}_h + c^{(2)}_h +c^{(3)}_h+c^{(4)}_h.
	\end{equation*}
	The estimate \eqref{dbsh} could be derived from Theorem \ref{be}, and we will prove~\eqref{dbs} and \eqref{dbs1} later. 
	For the sharpness of frequency envelopes, using $\rho- \rho^{h}=\sum^\infty_{m=h}( \rho^{m+1}-\rho^m )$ and \eqref{dbs}, so we obtain
	\begin{equation}\label{ei0}
		\| \rho- \rho^{h} \|_{H_x^{s}} \lesssim c_{\geq h},
	\end{equation}
	Here $c_{\geq h}=\sum_{j \geq h}c_j$. Similarly, for the sharpness of the frequency envelopes, by using \eqref{dbsh} and \eqref{dbs}, we also have
	\begin{equation}\label{ei1}
		\|  \bv- \bv^{h}\|_{H_x^{s}} \lesssim c_{\geq h}, \quad \| \varpi- \varpi^{h} \|_{H_x^{2}} \lesssim d_{\geq h}.
	\end{equation}

	\quad \textbf{Step 3}: the convergence. For the initial data $(\bv_{0j}, \rho_{0j}, \varpi_{0j})$ and $\bv_{0j}=(v^1_{0j},\bv^2_{0j})$, let $\{ c^{(i)j}_{k}\}_{k\geq 0}$ be frequency envelopes for the initial data $v^{i}_{0j}$ in $H^s$, $i=1,2$. Let $\{ c^{(3)j}_{k}\}_{k\geq 0}$ be frequency envelopes for the initial data $\rho_{0j}$ in $H^s$. Let $\{ c^{(4)j}_{k}\}_{k\geq 0}$ be frequency envelopes for the initial data $\varpi_{0j}$ in $H^2$. We choose a family of regularizations $(v^{1h}_{0j}, v^{2h}_{0j}, \rho^{h}_{0j})\in \cap^\infty_{a=0}H^a$ at frequencies $\lesssim 2^h$ where $h$ is a dyadic frequency parameter. Denote
	\begin{equation*}
		\varpi^h_{0j}=\bar{\rho}^{-1}\mathrm{e}^{-\rho^{h}_{0j}}\mathrm{curl}\bv^h_{0j}.
	\end{equation*}
	Then the functions $v^{1h}_{0j}, v^{2h}_{0j}, \rho^{h}_{0j}$, and $\varpi^h_{0j}$ can also have

	(i)  uniform bounds
	\begin{equation*}
		\| P_k v^{ih}_{0j} \|_{H_x^s} \lesssim c^{(i)j}_k, i=1,2, \quad \| P_k \rho^{h}_{0j} \|_{H_x^s} \lesssim c^{(3)j}_k,\quad \| P_k \varpi^{h}_{0j} \|_{H_x^2} \lesssim c^{(4)j}_k,
	\end{equation*}
	
	(ii)  high frequency bounds
	\begin{align*}
		& \|   v^{ih}_{0j} \|_{H_x^{s+a}} \lesssim 2^{ah}c^{(i)}_h, i=1,2, \quad \|   \rho^{h}_{0j} \|_{H_x^{s+a}} \lesssim 2^{ah}c^{(3)}_h,\\
		&\|  \varpi^{h}_{0j} \|_{H_x^{2+a}} \lesssim 2^{ah}c^{(4)}_h, \quad a  \geq 0,
	\end{align*}
	
	(iii)  difference bounds
	\begin{equation*}
		\begin{split}
			&\|  v^{1(h+1)}_{0j}- v^{1h}_{0j}\|_{L_x^{2}} \lesssim 2^{-sh}c^{(1)j}_h, \quad  \|  v^{2(h+1)}_{0j}- v^{2h}_{0j}\|_{L_x^{2}} \lesssim 2^{-sh}c^{(2)j}_h,
			\\
			&\| \rho^{h+1}_{0j}- \rho^{h}_{0j} \|_{L_x^{2}} \lesssim 2^{-sh}c^{(3)j}_h, \qquad \ \  \| \varpi^{h+1}_{0j}- \varpi^{h}_{0j} \|_{L_x^{2}} \lesssim 2^{-2h}c^{(4)j}_h,
		\end{split}
	\end{equation*}
	
	(iv)  limit
	\begin{equation*}
		\begin{split}
			&\lim_{h\rightarrow \infty}(\bv^h_{0j}, \rho_{0j}^h)=(\bv_{0j}, \rho_{0j}),  \qquad \mathrm{in}\ H^s,
			\\
			&\lim_{h\rightarrow \infty}\varpi^h_{0j}=\varpi_{0j},  \qquad \qquad \quad \ \ \ \ \ \  \mathrm{in}\ H^2.
		\end{split}
	\end{equation*}
	By Proposition \ref{p3}, we could obtain a family of smooth solutions $(\bv_j^{h}, \rho_j^{h}, \varpi_j^h)$ on a time interval $[0,T]$ with the initial data $ (\bv_{0j}^{h}, \rho_{0j}^{h}, \varpi_{0j}^h)$. In the following, let us prove
	\begin{equation}\label{ta}
		\lim_{j\rightarrow \infty} \rho_j = \rho,  \quad \mathrm{in} \ H^s_x.
	\end{equation}
	We note
	\begin{equation}\label{cd1}
		\begin{split}
			\|(\rho_{j}-\rho)(t)\|_{H_x^s}
			\leq & \|(\rho^{h}_j-\rho^{h})(t)\|_{H_x^s}+ \|(\rho^{h}-\rho)(t)\|_{H_x^s}+ \|(\rho^{h}_j-\rho_j)(t)\|_{H_x^s}.
		\end{split}
	\end{equation}
	Also, the initial data tells us 
	\begin{equation}\label{cd4}
		\lim_{j\rightarrow \infty}\rho^h_{0j}= \rho^h_0 \quad  \mathrm{in} \ {H^\sigma_x}, \ 0\leq \sigma < \infty.
	\end{equation}
	By using a similar way in Step 1, i.e. a $L^2$ convergence and a interpolation inequality, we can derive that
	\begin{equation}\label{cd5}
		\lim_{j\rightarrow \infty}\rho^h_{j}= \rho^h \quad  \mathrm{in} \ {H^\sigma_x}, \ 0 \leq \sigma < \infty.
	\end{equation}
	From \eqref{cd4}, it yields
	\begin{equation}\label{cd6}
		c^{(3)j}_{k} \rightarrow c^{(3)}_{k} , \ j\rightarrow \infty.
	\end{equation}
	Using \eqref{cd1} and \eqref{ei0}, then \eqref{cd1} becomes
	\begin{equation}\label{cd3}
		\begin{split}
			\|(\rho_{j}-\rho)(t)\|_{H_x^s}
			\lesssim \|(\rho^{h}_j-\rho^{h})(t)\|_{H_x^s}+ c_{\geq h}+ c^{(3)j}_{\geq h},
		\end{split}
	\end{equation}
	Taking the supper limit of \eqref{cd3} for $j\rightarrow \infty$, and using \eqref{cd5}--\eqref{cd6}, this leads to
	\begin{equation}\label{Cd3}
		\begin{split}
			{\lim\sup}_{j\rightarrow \infty }\|(\rho_{j}-\rho)(t)\|_{H_x^s}
			\lesssim c_{\geq h}+ c^{(3)}_{\geq h},
		\end{split}
	\end{equation}
	Finally, taking $h\rightarrow \infty$ for \eqref{Cd3}, we have
	\begin{equation}\label{lip}
		{\lim\sup}_{j\rightarrow \infty }\|(\rho_{j}-\rho)(t)\|_{H_x^s}=0.
	\end{equation}
	For $\|(\rho_{j}-\rho)(t)\|_{H_x^s} \geq 0$, using \eqref{lip}, so we have
	\begin{equation}\label{cd7}
		\begin{split}
			{\lim}_{j\rightarrow \infty }\|(\rho_{j}-\rho)(t)\|_{H_x^s}=0.
		\end{split}
	\end{equation}
	In a similar idea, we could also prove that
	\begin{equation*}
		\lim_{j\rightarrow \infty}\|(\bv_{j}-\bv)(t)\|_{H_x^s}=0,
	\end{equation*}
	and
	\begin{equation*}
		\lim_{j\rightarrow \infty}\|(\varpi_{j}-\varpi)(t)\|_{H_x^2}=0.
	\end{equation*}
	We omit the details here. Hence, we have finished the proof of Theorem \ref{dingli2} if we assume \eqref{dbs} and \eqref{dbs1} hold.
\end{proof}
Now, let us prove \eqref{dbs} and \eqref{dbs1}.
\begin{proof}[proof of \eqref{dbs} and \eqref{dbs1}.] Let $\bU^{h}=(\bv^h,p(\rho^h))^\mathrm{T} $. Then $\bU^{h+1}$ and $\bU^{h}$ satisfy
	\begin{equation*}
		\begin{cases}
			A^0(\bU^{h+1}) \partial_t \bU^{h+1}+ A^i(\bU^{h+1}) \partial_i \bU^{h+1}=0,
			\\
			A^0(\bU^{h}) \partial_t \bU^{h}+ A^i(\bU^{h}) \partial_i \bU^{h}=0.
		\end{cases}
	\end{equation*}
	Then $\bU^{h+1}- \bU^{h}$ satisfies
	\begin{equation}\label{FheA}
		\begin{cases}
			A^0(\bU^{h+1}) \partial_t ( \bU^{h+1}- \bU^{h}) + A^i(\bU^{h+1}) \partial_i ( \bU^{h+1}- \bU^{h})=\bF^h,
			\\
			( \bU^{h+1}-\bU^{h} )|_{t=0}= \bU_0^{h+1}-\bU_0^{h},
		\end{cases}
	\end{equation}
	where
	\begin{equation}\label{Fh}
		\bF^h=-[A^0(\bU^{h+1})-A^0(\bU^{h}) ]\partial_t  \bU^{h}- [A^i(\bU^{h+1})-A^i(\bU^{h}) ]\partial_i  \bU^{h}.
	\end{equation}
	By energy estimates, for $a \geq 0$, it yields
	\begin{equation*}
		\begin{split}
			&\frac{d}{dt} \| \bU^{h+1}-\bU^{h} \|_{\dot{H}^a_x} \\
			&\quad \lesssim   \| (d \bU^{h+1}, d \bU^{h}) \|_{L^\infty_x}\| \bU^{h+1}-\bU^{h} \|_{\dot{H}^a_x}
			+ \| d \bU^{h} \|_{\dot{H}^{a}_x} \|  \bU^{h+1}-  \bU^{h} \|_{L^\infty_x}
			\\
			&\quad \lesssim   \| ( d \bU^{h+1} , d \bU^{h} )\|_{L^\infty_x} \| \bU^{h+1}-\bU^{h} \|_{\dot{H}^a_x}
			+  2^h \|  \bU^{h+1}-  \bU^{h} \|_{L^\infty_x} \| d \bU^{h} \|_{\dot{H}^{a-1}_x}.
		\end{split}
	\end{equation*}
	As a result, for any $a \geq 0$, we get
	\begin{align}\label{ff1}
		& \| \bv^{h+1}-\bv^{h},\rho^{h+1}-\rho^{h} \|_{\dot{H}^a_x} \\
		\notag   &\quad \leq   \| \bv_0^{h+1}-\bv_0^{h}, \rho_0^{h+1}-\rho_0^{h} \|_{\dot{H}^a_x}
		\\
		\notag    &\qquad + C_0\int^t_0 \| (d \bv^{h+1}, d \rho^{h+1}) \|_{L^\infty_x}\| \bv^{h+1}-\bv^{h},\rho^{h+1}-\rho^{h} \|_{ \dot{H}^a_x} d\tau
		\\
		\notag    &\qquad + C_0\int^t_0 \| (d \bv^{h}, d \rho^{h}) \|_{L^\infty_x}\| \bv^{h+1}-\bv^{h},\rho^{h+1}-\rho^{h} \|_{ \dot{H}^a_x} d\tau
		\\
		\notag   &\qquad
		+ C_0 \int^t_0 2^h \| d \bv^{h}, d \rho^{h} \|_{ \dot{H}^{a}_x} \|  \bv^{h+1}-  \bv^{h}, \rho^{h+1}-  \rho^{h} \|_{ L^\infty_x} d\tau.
	\end{align}
	Let us use the wave equation to bound the term $\|  \bv^{h+1}-  \bv^{h} \|_{L^\infty_x}$. For $(\bv^{h}, \rho^{h}, \varpi^{h})$ satisfying \eqref{fc}, then we have
	\begin{equation*}
		\begin{cases}
			\square_{g^{h}} v^{ih}= -[ia]e^{\rho^h}c^2_s\partial^a \varpi^h+Q^{ih} + E^{ih}
			\\
			\square_{g^h} \rho^{h}=\mathcal{D}^{h},
			\\
			\partial_t \varpi^{h}+ \bv^{h} \cdot \nabla \varpi^{h} =0,
		\end{cases}
	\end{equation*}
	Above, $g^h, Q^{hi}, E^{hi}$ and $\mathcal{D}^h$ have the same formulation with $g, Q^{i}, E^{i}$ and $\mathcal{D}$ in \eqref{Di} if we replace $(\bv,\rho,\varpi)$ in \eqref{Di} to $(\bv^h,\rho^h,\varpi^h)$. Thus, the quantities $\bv^{h+1}-\bv^{h}$ and  $\rho^{h+1}-\rho^{h}$ satisfies
	\begin{equation}\label{ss4}
		\begin{cases}
			\square_{g^{h+1}} (\bv^{h+1}-\bv^{h})= \bF^{h+1}- \bF^{h}-( g^{h+1}_{\alpha i}-g^h_{\alpha i}) \partial^{i \alpha} \bv^{h},
			\\
			\square_{g^{h+1}} (\rho^{h+1}-\rho^{h})=\mathcal{D}^{h+1}-\mathcal{D}^{h}-( g^{h+1}_{\alpha i}-g^h_{\alpha i}) \partial^{i \alpha} \rho^{h},
		\end{cases}
	\end{equation}
	Above, we use the fact that $g^{00}=-1$, and we set $\bF^h=(F^{h1},F^{h2},F^{h3})$, $F^{hi}=-[ia]e^{\rho^h}c^2_s \partial_a \varpi^h+Q^{hi}+E^{hi}, (i=1,2,3)$. By product estimates, for $ a \geq 0$, we have
	\begin{multline}\label{Fhe}
		\| \bF^{h+1}- \bF^{h} \|_{H^{a}_x} \lesssim  \| \varpi^{h+1}-\varpi^{h} \|_{H^{a+1}_x} \\
		+ \|(d\rho^h, d\rho^{h+1},d\bv^h,d\bv^{h+1})\|_{L^\infty_x} \| (d\bv^{h+1}-d\bv^{h},d\rho^{h+1}-d\rho^{h}) \|_{H^{a}_x}.
	\end{multline}
	By product estimates and Bernstein's inequalities, we find that
	\begin{align}\label{Fh2}
		\| ( g^{h+1}_{\alpha i}-g^h_{\alpha i}) \partial^{i \alpha} \bv^{h} \|_{H^{a}_x}
		& \lesssim    \|\partial d\bv^h \|_{L^\infty_x}  \| (\bv^{h+1}-\bv^{h},\rho^{h+1}-\rho^{h}) \|_{H^{a}_x}
		\\
		\notag   &\quad  +   \| (\bv^{h+1}-\bv^{h}, \rho^{h+1}-\rho^{h}) \|_{L^\infty_x} \| \partial d\bv^{h} \|_{H^{a}_x}
		\\
		\notag   &\lesssim  2^h  \| d\bv^h \|_{L^\infty_x}  \| (\bv^{h+1}-\bv^{h},\rho^{h+1}-\rho^{h}) \|_{H^{a}_x}
		\\
		\notag   &\quad  +   \| (\bv^{h+1}-\bv^{h}, \rho^{h+1}-\rho^{h}) \|_{L^\infty_x} \| \partial d\bv^h \|_{H^{a}_x},
	\end{align}
	and
	\begin{align}\label{Fh3}
		\| ( g^{h+1}_{\alpha i}-g^h_{\alpha i}) \partial^{i \alpha} \rho^{h} \|_{H^{a}_x}
		&  \lesssim    \|\partial d\rho^h \|_{L^\infty_x}  \| (\bv^{h+1}-\bv^{h},\rho^{h+1}-\rho^{h}) \|_{H^{a}_x}
		\\
		\notag   &\quad  +   \| (\bv^{h+1}-\bv^{h}, \rho^{h+1}-\rho^{h}) \|_{L^\infty_x} \| \partial d\rho^{h} \|_{H^{a}_x}
		\\
		\notag   & \lesssim  2^h  \| d\rho^h \|_{L^\infty_x}  \| (\bv^{h+1}-\bv^{h},\rho^{h+1}-\rho^{h}) \|_{H^{a}_x}
		\\
		\notag   &\quad +   \| (\bv^{h+1}-\bv^{h}, \rho^{h+1}-\rho^{h}) \|_{L^\infty_x} \|\partial d\rho^h \|_{H^{a}_x} .
	\end{align}
	Due to product estimates, we get
	\begin{multline}\label{Fh4}
		\| \mathcal{D}^{h+1}- \mathcal{D}^{h} \|_{H^{a}_x} \\
		\lesssim \|(d\rho^h, d\rho^{h+1},d\bv^h,d\bv^{h+1})\|_{L^\infty_x} \| (d\bv^{h+1}-d\bv^{h},d\rho^{h+1}-d\rho^{h}) \|_{H^{a}_x} .
	\end{multline}
	For \eqref{ss4}, by using Proposition \ref{r3} again, and combining with \eqref{Fhe}, \eqref{Fh2}, \eqref{Fh3}, \eqref{Fh4}, we could derive that
	
	\begin{align}\label{ff2}
		& \| (\bv^{h+1}-\bv^{h}, \rho^{h+1}-\rho^{h}) \|_{L^4_t L^\infty_x}\\
		\notag  &\quad \leq  C\| (\bv_0^{h+1}-\bv_0^{h}, \rho_0^{h+1}-\rho_0^{h}) \|_{ H^{s-1}_x}+ C\int^t_0 \| \varpi^{h+1}-\varpi^{h} \|_{ H^{s-1}_x} d\tau
		\\
		\notag &\qquad + C\int^t_0 \|d\bv^h,d\bv^{h+1}\|_{ L^\infty_x} \| (d\bv^{h+1}-d\bv^{h},d\rho^{h+1}-d\rho^{h}) \|_{ H^{s-2}_x} d\tau
		\\
		\notag &\qquad +C \int^t_0 \|d\rho^h, d\rho^{h+1}\|_{ L^\infty_x} \| (d\bv^{h+1}-d\bv^{h},d\rho^{h+1}-d\rho^{h}) \|_{ H^{s-2}_x} d\tau
		\\
		\notag &\qquad+ C\int^t_0 2^h  \| d\bv^h \|_{ L^\infty_x}  \| (\bv^{h+1}-\bv^{h},\rho^{h+1}-\rho^{h}) \|_{H^{s-2}_x} d\tau
		\\
		\notag &\qquad + C\int^t_0 \| (\bv^{h+1}-\bv^{h}, \rho^{h+1}-\rho^{h}) \|_{ L^\infty_x} \|\partial d\bv^h \|_{H^{s-2}_x} d\tau
		\\
		\notag &\qquad +C \int^t_0 2^h  \| d\rho^h \|_{ L^\infty_x}  \| (\bv^{h+1}-\bv^{h},\rho^{h+1}-\rho^{h}) \|_{H^{s-2}_x} d\tau
		\\
		\notag &\qquad +  C\int^t_0 \| (\bv^{h+1}-\bv^{h}, \rho^{h+1}-\rho^{h}) \|_{ L^\infty_x} \|\partial d\rho^h \|_{H^{s-2}_x} d\tau.
	\end{align}
	We will prove \eqref{dbs} by continuation argument. We assume
	\begin{align*}
		& \|  (\bv^{h+1}- \bv^{h}, \rho^{h+1}- \rho^{h}) \|_{L_x^{2}} \leq C_1 2^{-sh}c_h, \\
		&\|  (\bv^{h+1}- \bv^{h}, \rho^{h+1}- \rho^{h}) \|_{\dot{H}_x^{s}} \leq C_1 c_h.
	\end{align*}
	Above, we set $C_1= 100C_0$. Let $t \in [0, \frac{1}{100M_0}]$, multiplying $2^h$ on \eqref{ff2}, we could update \eqref{ff2} by
	\begin{equation}\label{ff3}
		\begin{split}
			2^h\| (\bv^{h+1}-\bv^{h}, \rho^{h+1}-\rho^{h}) \|_{L^4_t L^\infty_x} \leq & 16C_1 (1+C) c_h .
		\end{split}
	\end{equation}
	Substituting \eqref{ff3} to \eqref{ff1}, and setting $t \in [0, \min\{\frac{1}{100M_0}, \frac{1}{1600C_1 (1+C)} \}]$, we obtain that
	\begin{align}\label{ff4}
		\| ( \bv^{h+1}-\bv^{h}, \rho^{h+1}-\rho^{h} ) \|_{\dot{H}^s_x} &\leq   10C_0  \| ( \bv_0^{h+1}-\bv_0^{h},\rho_0^{h+1}-\rho_0^{h} ) \|_{\dot{H}^s_x}\\
		\notag  &\leq 10C_0 c_h.
	\end{align}
	In a similar way, we could also deduce
	\begin{equation*}
		\|  (\bv^{h+1}- \bv^{h}, \rho^{h+1}- \rho^{h}) \|_{L_x^{2}} \leq 10C_0 2^{-sh}c_h.
	\end{equation*}
	Hence, we have proved \eqref{dbs}. It remains for us to prove \eqref{dbs1}. To do this, we now derive the structure equation of $\varpi^{h+1}-\varpi^h$. In fact, the quantities $\varpi^{h+1}-\varpi^h$ and $\Delta \varpi^{h+1}-\Delta \varpi^h$ satisfy
	\begin{equation}\label{ss5}
		\partial_t (\varpi^{h+1}-\varpi^{h})+ \bv^{h+1} \cdot \nabla (\varpi^{h+1}-\varpi^{h}) =(\bv^h-\bv^{h+1})\nabla\varpi^h,
	\end{equation}
	and
	\begin{align}\label{ss6}
		& \partial_t (\Delta \varpi^{h+1}-\Delta \varpi^{h}+R_0^h )+ \bv^{h+1} \cdot \nabla (\Delta \varpi^{h+1}-\Delta \varpi^{h}+ R_0^h)
		\\
		\notag   &\quad = (\bv^h-\bv^{h+1})\cdot \nabla(\Delta \varpi^{h}-\partial \rho^{h}\partial \varpi^{h})+ R^{h+1}-R^h.
	\end{align}
	Above,
	\begin{equation}\label{rh}
		\begin{split}
			R^h_0=&-\partial \rho^{h+1}\partial \varpi^{h+1}+\partial \rho^{h}\partial \varpi^{h},
			\\
			R^h=& - 2{\sum_{i,j=1}^2}\partial_j v^{ih} \partial_j \rho^h \partial_i \varpi^h- \mathrm{e}^{\rho^h}(\partial^{\bot}\rho^h \varpi^h+ \partial^{\bot}\varpi^h  )\partial \varpi^h \\
			&- 2{\sum_{i,j=1}^2} \partial_i v^{jh} \partial^2_{ij}\varpi^h.
		\end{split}
	\end{equation}
	Multiplying on \eqref{ss5} with $\varpi^{h+1}-\varpi^{h} $ and integrating it on $\mathbb{R}^2$, we obtain
	\begin{align*}
		\frac{d}{dt} \| \varpi^{h+1}-\varpi^{h} \|^2_{L^2_x} &\leq C \| \partial \bv^{h+1} \|_{L^\infty_x} \| \varpi^{h+1}-\varpi^{h} \|^2_{L^2_x}\\
		&\quad +C \|  \bv^{h+1} - \bv^{h} \|_{L^2_x}\|\nabla \varpi^{h} \|_{L^\infty_x} \| \varpi^{h+1}-\varpi^{h} \|_{L^2_x} .
	\end{align*}
	Hence, the estimate
	\begin{align}\label{ss8a}
		& \frac{d}{dt} \| \varpi^{h+1}-\varpi^{h} \|^2_{L^2_x} \\
		\notag  &\leq  C \| \partial \bv^{h+1} \|_{L^\infty_x} \| \varpi^{h+1}-\varpi^{h} \|_{L^2_x}+C \|  \bv^{h+1} - \bv^{h} \|_{L^2_x}\|\nabla \varpi^{h} \|_{L^\infty_x}
		\\
		\notag  & \leq  C \| \partial \bv^{h+1} \|_{L^\infty_x} \| \varpi^{h+1}-\varpi^{h} \|_{L^2_x}+C 2^{\frac12(s-\frac74)h}\|  \bv^{h+1} - \bv^{h} \|_{L^2_x}\| \varpi^{h} \|_{H^2_x}\hspace{-1ex}
	\end{align}
	holds. For \eqref{ss8a}, by using Gronwall's inequality and \eqref{ff3}, we have
	\begin{equation}\label{ss9}
		\| \varpi^{h+1}-\varpi^{h} \|_{L^2_x} \leq C 2^{-sh} 2^{-\frac12(s-\frac74)h} d_h .
	\end{equation}
	Multiplying $\Delta \varpi^{h+1}-\Delta \varpi^{h}+R_0^h$ on \eqref{ss6} and integrating it on $\mathbb{R}^2$, we derive that
	\begin{align}\label{ss8}
		& \frac{d}{dt} \| \Delta \varpi^{h+1}-\Delta \varpi^{h}+R_0^h \|_{L^2_x}
		\\
		\notag   &\quad \leq  C \| \partial \bv^{h+1} \|_{L^\infty_x} \| \Delta \varpi^{h+1}-\Delta \varpi^{h}+R_0^h \|_{L^2_x}+\|R^{h+1} -R^h \|_{L^2_x}
		\\
		\notag   &\qquad +C \|  \bv^{h+1} - \bv^{h} \|_{L^\infty_x}\|\nabla (\Delta \varpi^{h} - \partial \rho^h \partial \varpi^h ) \|_{L^2_x}
		\\
		\notag   &\quad \leq  C \| \partial \bv^{h+1} \|_{L^\infty_x} \| \Delta \varpi^{h+1}-\Delta \varpi^{h}+R_0^h \|_{L^2_x}+\|R^{h+1} -R^h \|_{L^2_x}
		\\
		\notag   &\qquad  +C 2^h \|  \bv^{h+1} - \bv^{h} \|_{L^\infty_x}\|\Delta \varpi^{h} - \partial \rho^h \partial \varpi^h \|_{L^2_x}.
	\end{align}
	On the other hand, by \eqref{rh}, hence $R^{h+1}-R^h$ includes
	\begin{align*}
		& ( \partial \bv^{h+1}- \partial \bv^{h} ) \partial \rho^h \partial \varpi^h, &&  ( \partial \rho^{h+1}- \partial \rho^{h} ) \partial \bv^h \partial \varpi^h, \\
		&( \partial \varpi^{h+1}- \partial \varpi^{h} ) \partial \bv^h \partial \rho^h,&& ( \partial \varpi^{h+1}- \partial \varpi^{h} ) \cdot ( \partial \varpi^{h}, \partial \varpi^{h+1}),\\
		&  ( \partial \bv^{h+1}- \partial \bv^{h} ) \partial^2 \varpi^h, && ( \partial^2 \varpi^{h+1}- \partial^2 \varpi^{h} ) \partial \bv^h.
	\end{align*}
	Using Sobolev imbeddings and Bernstein's inequality, we could bound $R^{h+1}-R^h$ by
	\begin{align}\label{ff5}
		\| R^{h+1}-R^h \|_{L^2_x} &\leq  C \|\partial (\bv^{h+1}- \bv^{h})\|_{L^\infty_x } \| \partial \rho^h\|_{L^\infty_x } \|\partial \varpi^h\|_{L^2_x }
		\\
		\notag&\quad  + C \| \partial ( \rho^{h+1}- \rho^{h}) \|_{L^\infty_x } \| \partial \bv^h \|_{L^\infty_x } \| \partial \varpi^h \|_{L^2_x }
		\\
		\notag&\quad  + C \| \partial ( \varpi^{h+1}- \varpi^{h}) \|_{L^2_x }  \| (\partial \varpi^{h}, \partial \varpi^{h+1} )\|_{L^\infty_x }
		\\
		\notag&\quad  + C \| \partial ( \bv^{h+1}- \bv^{h} )\|_{L^\infty_x } \| \partial^2 \varpi^h \|_{L^2_x}
		\\
		\notag&\quad  + \| \partial^2 ( \varpi^{h+1}- \varpi^{h} )\|_{L^2_x} \| \partial \bv^h \|_{L^\infty_x }
		\\
		\notag&  \leq  C 2^h\| \bv^{h+1}- \bv^{h}\|_{L^\infty_x } \| \partial \rho^h\|_{L^\infty_x }\\
		\notag &\quad   + C 2^h \|  \rho^{h+1}- \rho^{h} \|_{L^\infty_x } \| \partial \bv^h \|_{L^\infty_x }
		\\
		\notag&\quad  + C 2^{\frac12(s-\frac{7}{4})h} \|  ( \varpi^{h+1}- \varpi^{h}) \|_{L^2_x }\\
		\notag &\quad   + C2^h \|  ( \bv^{h+1}- \bv^{h} )\|_{L^\infty_x }
		\\
		\notag&\quad  + C\| \partial^2 ( \varpi^{h+1}- \varpi^{h} )\|_{L^2_x} \| \partial \bv^h \|_{L^\infty_x }.
	\end{align}
	We assume $\| \varpi^{h+1}- \varpi^{h} \|_{H^2_x} \leq 1000C d_h$. Hence, on $t \in [0,T]$, we have
	\begin{equation*}
		\| R^{h+1}-R^h \|_{L^1_t L^2_x}  \leq  C T^{\frac34}c_h+ C T2^{-\frac{7}{4}h} d_h
		+ CTc_h  + 1000C^2T^{\frac34} d_h .
	\end{equation*}
	Integrating \eqref{ss8} on $[0,T]$($T \leq 10^{-8}C^{-\frac83}$), using \eqref{ff3}, \eqref{ss9}, and Gronwall's equality, we could derive
	\begin{equation*}
		\| \varpi^{h+1}- \varpi^{h} \|_{\dot{H}^2_x} \leq 100Cd_h.
	\end{equation*}
	Therefore, we have proved \eqref{dbs1}. Then we have finished the proof of \eqref{dbs} and \eqref{dbs1}.
\end{proof}

\section*{Acknowledgments} The author would like to express great gratitude to the reviewers for their helpful advice. The author also feels grateful to Lars Andersson for hours of discussions throughout the preparation of this work. The author is supported by Natural Science Foundation of Hunan Province, China(Grant No. 2025JJ40003), the Fundamental Research Funds for the Central Universities, and National Natural Science Foundation of China (Grant No. 12101079).

\end{document}